\numberwithin{equation}{section}
\title{Stabilization-Free \(\mathbf{H}\left(\operatorname{\mathbf{curl}}\right)\) and \(\mathbf{H}\left(\operatorname{div}\right)\)-Conforming Virtual Element Method}
\author{Yuxuan Liao\thanks{Department of Electronic Engineering, Tsinghua University, 100084 Beijing, People's Republic of China.}
\and Xue Feng\thanks{Corresponding authors. Department of Electronic Engineering, Tsinghua University, 100084 Beijing, People's Republic of China.
  (\email{x-feng@tsinghua.edu.cn}, \email{yidonghuang@tsinghua.edu.cn}).}
\and Yidong Huang\footnotemark[2]}
\newcommand{\normmm}[1]{{\left\vert\kern-0.25ex\left\vert\kern-0.25ex\left\vert #1 
   \right\vert\kern-0.25ex\right\vert\kern-0.25ex\right\vert}}
\newcommand{\bfitgreek}[1]{\bm{\mathit{#1}}}
\begin{document}

\maketitle

\begin{abstract}
  Standard Virtual Element Method (VEM) requires stabilization terms that significantly affect the numerical computation performance. In this work, we propose a stabilization-free VEM for general order \(\mathbf{H}(\operatorname{\mathbf{curl}})\) and \(\mathbf{H}(\operatorname{div})\)-conforming spaces by constructing novel serendipity projectors and corresponding serendipity spaces with minimum number of DoFs. Our approach handles the full De Rham complex chain in \(\mathbb{R}^3\) while preserving essential properties including boundary continuity and commutativity. Since the number of DoFs are minimized, computational overhead is greatly reduced. The optimal approximation properties are rigorously proven and validated through Maxwell eigenvalue problems with numerical experiments.
\end{abstract}

\begin{keywords}
  Virtual Element Method, serendipity space, stabilization, H(curl) and H(div)-conforming, error analysis
\end{keywords}

\begin{MSCcodes}
  65N12, 65N22, 65N30
\end{MSCcodes}

\section{Introduction}

The Virtual Element Method (VEM)\cite{beiraodaveigaVirtualElementMethod2023}, introduced as a natural extension of the finite element method (FEM), has gained considerable attention for its exceptional capability to handle general star-shaped polytopes. Such flexibility offers significant advantages over conventional finite element methods, particularly for complex geometries and adaptive mesh refinement. For comprehensive reviews of VEM's advantages and applications, we refer to \cite{antoniettiVirtualElementMethod2022a}.

However, its geometric flexibility comes at a cost. Unlike traditional FEM, the virtual element function space \(\mathbf{V}(K)\) is no longer purely polynomial, necessitating projections onto polynomial spaces for numerical computation. Specifically, to solve weak formulations of the form \(a(u,v)=b(v)\), the local discrete bilinear form must be reformulated as \(a_{K}(\bfitgreek{\Pi}_{k}u,\bfitgreek{\Pi}_{k}v)\), where \(\bfitgreek{\Pi}_{k}:\mathbf{V}(K)\to \mathbb{P}_{k}(K)\) denotes the polynomial projection.

The fundamental challenge in standard VEM arises from the instability of this projection: since typically \(\dim\mathbb{P}_{k}^{d}(K) < \dim\mathbf{V}(K)\), the inequality \(\|\bfitgreek{\Pi}_{k}v\|_{K}\gtrsim\|v\|_{K}\) fails to hold. Consequently, stabilization terms of the form
\[
  a_{K}(v,v)\lesssim S_{K}(v,v)\lesssim a_{K}(v,v)\ \forall v\in \ker\bfitgreek{\Pi}_{k}
\]
are essential to maintain the coercivity. These stabilization terms present several critical drawbacks: they significantly impact numerical stability and condition numbers\cite{beiraodaveigaStabilityAnalysisVirtual2017}, require careful problem-specific design (especially for nonlinear problems\cite{wriggersVirtualElementMethods2024}), and exhibit isotropic behavior that proves problematic for anisotropic coefficients or meshes\cite{berroneComparisonStandardStabilization2022}.

\textbf{Existing Stabilization-Free Approaches.} Recent efforts to eliminate stabilization terms have pursued two distinct strategies.

The first is to modify the virtual
element space, so that a stable \(\mathrm{L}^2\)-polynomial
projection can be computed. The first work for \(\mathrm{H}^1\)-conforming virtual element
spaces is \cite{berroneLowestOrderStabilization2023}. It utilizes the lowest order
\(\mathrm{H}^1\)-seminorm projector \(\bfitgreek{\Pi}^\nabla _{1}\), and construct
the virtual element space with functions equivalent under this
projector:
\begin{equation}
  \begin{split}
    \mathbf{V}_{1, l}\left(F\right):=\left\{v\in \mathrm{H}^1\mid \left(\bfitgreek{\Pi}^\nabla_{1}v-v,p\right)_{F}=0\
    \forall p\in \mathbb{P}_{l+1}(F), \Delta v\in \mathbb{P}_{l+1}(F),\right. \\
    \left.v_{\mid E} \in \mathbb{P}_1(E)\ \forall E \in \partial F, v \in C^0(\partial F)\right\}.
  \end{split}
\end{equation}
Then the \(\mathrm{L}^2\)-projection of \(\nabla v\) to
\(\mathbb{P}^2_{l}\left(F\right)\) is computable, so
\(\left(\nabla u,\nabla v\right)_{F}\ \forall u,v\in \mathbf{V}_{1,l}\left(F\right)\)
is computed as
\(\left(\bfitgreek{\Pi}_{l}\nabla u,\bfitgreek{\Pi}_{l}\nabla v\right)_{F}\).
This discrete bilinear form requires no stabilization since
\(\left\|\nabla v\right\|_{F}\approx\left\|\bfitgreek{\Pi}_{l}\nabla v\right\|_{F}\). This technique, while can be extended to general order spaces, inevitably requires the internal DoFs for \(k>1\). Since \(\dim \mathbb{P}_{l}^{2}(F)\geq \dim \mathbf{V}(F)\) is necessary for \(\bfitgreek{\Pi}_{l}\) to be stable, these internal DoFs, though can be eventually eliminated by static condensation, still require \(l\) much larger than it truly needs to be, which would greatly increase the computational cost.
While in \cite{chenStabilizationfreeSerendipityVirtual2023}
this technique is further extended to general order spaces, by replacing
\(\bfitgreek{\Pi}^\nabla _{1}\) with the serendipity projector
\(\bfitgreek{\Pi}^{\mathrm{S}}_{k}\). Later in \cite{xuStabilizationfreeVirtualElement2023,xuStabilizationfreeVirtualElement2024}, the same technique is applied to nonlinear two-dimensional (2D) and three-dimensional (3D) finite elasticity problems, which greatly exhibit the advantages of stabilization-free VEM. Since the serendipity projector can be computed using only the boundary DoFs, it overcomes the internal DoFs issue. However, the trivially chosen serendipity projector, do not preserve many crucial properties required by other weak formulations. For example, the lack of commutativity with differential operators, makes it unapplicable to mixed formulations.

The other strategy is to modify the projection space. In \cite{chenVirtualElementMethods2024}, an
\(\mathbf{H}\left(\operatorname{div}\right)\)-conforming macro finite element
\(\mathbf{V}^{\operatorname{div}}_{k-1}\left(K\right)\) is constructed,
and the \(\mathrm{L}^2\)-projection from the conventional virtual element space
\(\mathbf{V}_{k}\left(K\right)\) to
\(\mathbf{V}^{\operatorname{div}}_{k-1}\left(K\right)\) is stable and
computable. But this method would dilute the advantages of VEM to some extent, since it requires simplical tessellation of polyhedra
and the construction of corresponding broken polynomial spaces, while VEM with the original form, is designed to handle more general polytopes.

\textbf{Our Contribution.} In this work, we introduce a fundamentally different stabilization-free VEM that addresses the limitations of existing approaches while preserving all essential mathematical and computational properties. Our method is built upon four key innovations:

\emph{Complete De Rham Complex Treatment:} Unlike previous works that handle \(\mathrm{H}^1\) space, we construct commutative serendipity projectors for the entire De Rham complex chain in \(\mathbb{R}^3\). This unified treatment not only ensures seamless integration across different differential operators, but also can be systematically extended to higher-dimensional De Rham complexes by recursively applying the same technique to boundaries of each dimension.

\emph{Preservation of Essential Mathematical Properties:} Our serendipity spaces rigorously maintain boundary continuity, operator conformity, exact commutativity with differential operators, and B-compatibility\cite{boffiMixedFiniteElement2013}.

\emph{Computational Efficiency:} Our approach achieves stabilization-free computation using only serendipity projections computed with minimum number of DoFs. This reduces the stable projector order significantly, for general order virtual element spaces. Notably, even for stabilized treatment, the reduction in DoFs is also substantial as it is applicable to eigenvalue problems where internal DoFs cannot be eliminated by static condensation.

The resulting stabilization-free VEM preserves all advantages of the original method while eliminating stabilization-related complications, making it particularly valuable for mixed formulations, nonlinear problems, and applications requiring robust numerical behavior across diverse mesh configurations.

\section{Preliminaries}

We utilize standard Sobolev spaces whose detailed definitions can be found in \cite{evansPartialDifferentialEquations2010}. For a Lipschitz domain \(D \subseteq \mathbb{R}^d\) with \(d=1,2,3\):

\begin{itemize}
  \item \(\mathrm{L}^2(D)\): space of square-integrable functions
  \item \(\mathrm{H}^s(D)\): space of functions with square-integrable derivatives up to order \(s\geq0\)
\end{itemize}

We use bold upright typeface for vector-valued spaces:
\begin{itemize}
  \item \(\mathbf{H}^s(\operatorname{\mathbf{curl}}, D)\): functions whose curl belongs to \(\mathbf{H}^s(D)\)
  \item \(\mathbf{H}^s(\operatorname{div}, D)\): functions whose divergence belongs to \(\mathrm{H}^s(D)\)
\end{itemize}

For any function space \(X\), we denote:
\begin{itemize}
  \item \(\|\cdot\|_{X}\) and \(|\cdot|_{X}\): norms and seminorms equipped to \(X\)
  \item \(\|\cdot\|_{s,D}\) and \(|\cdot|_{s,D}\): norms and seminorms for \(\mathrm{H}^s(D)\) or \(\mathbf{H}^s(D)\)
  \item \((\cdot,\cdot)_{D}\): \(\mathrm{L}^2\)-inner product in domain \(D\)
\end{itemize}
When \(s=0\), the subscript \(s\) is omitted. When the domain is clear from context, the subscript \(D\) is omitted. The subscript \(0\) denotes the space of functions with zero trace, and the subscript \(\operatorname{div}0\) denotes the space of functions with zero divergence.

For linear spaces \(X\subseteq Y\), \(Y/X\) denotes the subspace of \(Y\) naturally isomorphic to the quotient space in conventional notation.

For polynomial spaces, we adopt the following notation:
\begin{itemize}
  \item \(\mathbb{P}^q_{k,d}(D)\): polynomials from \(\mathbb{R}^d\) to \(\mathbb{R}^q\) of degree at most \(k\)
  \item \(\mathbb{P}^q_{k_1+1:k_2,d}(D)\): \(\mathbb{P}^q_{k_2,d}(D)/\mathbb{P}^q_{k_1,d}(D)\)
\end{itemize}
Following standard convention: \(q\) is omitted when \(q=1\), \(d\) is omitted when clear from context, and \(k\) is omitted for arbitrarily large polynomial degree.

We reserve specific notation for these entities:
\begin{itemize}
  \item \(\mathbf{V}\): finite element spaces
  \item \(\bfitgreek{\Pi}\): general projectors
  \item \(\bfitgreek{\Pi}^q_{k,d}\): \(\mathrm{L}^2\)-projector onto \(\mathbb{P}^q_{k,d}\)
\end{itemize}
The subscripts \(q,k,d\) follow the same omission rules as polynomial spaces.
\begin{itemize}
  \item \(\boldsymbol{I}\): interpolators
  \item \(\boldsymbol{E}\): identity transforms

  \item \(K,F,E,V\): polyhedron (body), face, edge, and vertex (node), respectively
  \item \(\boldsymbol{n}\): outward unit normal vector of face \(F\)
  \item \(\boldsymbol{t}\): unit tangent vector of edge \(E\)
  \item \(\nabla_F\): surface nabla operator on face \(F\)
  \item \(\boldsymbol{v}^F\): tangential component of \(\boldsymbol{v}\) on face \(F\)
\end{itemize}

For positive quantities \(a\) and \(b\):
\begin{itemize}
  \item \(a \lesssim b\): there exists constant \(c>0\) such that \(a \leq cb\)
  \item \(a \gtrsim b\): there exists constant \(c>0\) such that \(a \geq cb\)
  \item \(a \approx b\): both \(a \lesssim b\) and \(b \lesssim a\) hold
\end{itemize}
All constants are required to be independent of the mesh size parameter \(h\).

\section{Serendipity Virtual Element Spaces: Construction, Interpolation, and Stability Analysis}

In this section, we first introduce the serendipity projector and the corresponding serendipity space (\cref{def:vs}) and the standard virtual element spaces (\cref{def:vn,def:vse,def:vsf}). Then we construct their serendipity counterparts with our novel serendipity projectors (\cref{def:vse,def:vsf,def:vsn}) and prove the approximation properties (\cref{thm:vf,thm:ve,thm:vn}). Finally, we establish the stability of the high-order polynomial projection (\cref{thm:st}) and discuss how the corresponding order \(l\) should be chosen in practice.

For the sake of simplicity, in this section we omit the subscript \(h\) for mesh size and \(K\) for element, when no ambiguity arises. In addition, \cref{ass:mesh} is assumed throughout this paper.

\begin{definition}[General Serendipity Projector and Space]
  \label{def:vs}Let \(\mathbf{V}\) be a finite element space and \(\dim\mathbf{V}=N_{\mathrm{V}}\),
  \(\mathscr{S}\subseteq \mathbf{V}\) be the subspace we want to keep and \(\dim\mathscr{S}=N_{\mathrm{P}}\). Let
  \(\left\{\mathcal{F}_{i}\right\}_{i=1}^{N_{\mathrm{V}}}\) be the DoFs of
  \(\mathbf{V}\). Define the vector of DoFs
  \[\mathcal{D}_{N_{\mathrm{V}}}\left(\boldsymbol{p}\right):=\left[\mathcal{F}_{1}\left(\boldsymbol{p}\right),\dots,\mathcal{F}_{N_{\mathrm{V}}}\left(\boldsymbol{p}\right)\right]^\top.\]
  With necessary re-order of the DoFs, we call the first
  \(N_{\mathrm{S}}\) DoFs \(\mathscr{S}\)-identifying, if:
  \begin{equation}
    \mathcal{D}_{N_{\mathrm{S}}}\left(\boldsymbol{p}\right)=0 \implies \boldsymbol{p}=0\ \forall \boldsymbol{p}\in \mathscr{S}.
  \end{equation}

  Then serendipity projector \(\bfitgreek{\Pi}^{\mathrm{S}}:\mathbf{V}\rightarrow\mathscr{S}\) is defined as:
  \begin{equation}
    \mathcal{D}_{N_{\mathrm{S}}}\left(\bfitgreek{\Pi}^{\mathrm{S}}\boldsymbol{v}-\boldsymbol{v}\right)^\top\mathbf{G}\mathcal{D}_{N_{\mathrm{S}}}\left(\boldsymbol{p}\right)=0\ \forall \boldsymbol{p}\in \mathscr{S},
  \end{equation}
  where \(\mathbf{G}\) is some symmetric and positive definite matrix. And the serendipity space is defined as:
  \begin{equation}
    \mathbf{V}^\mathrm{S}=\left\{\boldsymbol{v} \in \mathbf{V} \mid \mathcal{D}_{ N_{\mathrm{S}}+1: N_{\mathrm{V}}}(v)=\mathbf{C}\mathcal{D}_{ N_{\mathrm{S}}}\left(\boldsymbol{v}\right)\right\},
    \label{eq:defvs}
  \end{equation}
  where \(\mathbf{C}\) is some coefficient matrix.
\end{definition}

We hereafter call
\(\left\{\mathcal{F}_{i}\right\}_{i=1}^{N_{\mathrm{S}}}\) ``preserved
DoFs'', and the remaining ``reduced DoFs''.

\begin{remark}
  \label{rem:vs}The major difference of definition here compared to that introduced in \cite{veigaSerendipityFaceEdge2017}, is that we do not necessarily determine the reduced DoFs with the serendipity projector equivalence. This provides us with greater flexibility in constructing spaces that meet the requirements, e.g., conformity, accuracy, compatibility with other spaces, and so on.
\end{remark}

Next we recall the results of the standard virtual element spaces
for \(\mathrm{H}^1,\mathbf{H}\left(\operatorname{\mathbf{curl}}\right)\) and
\(\mathbf{H}\left(\operatorname{div}\right)\)\cite{beiraodaveigaVirtualElementMethod2023}.

Let \(\mathcal{T}_{h}\) be a mesh partitioning of the computational
domain \(\Omega\). \(h_{E},h_{F},h_{K}\) are the diameters of the edge, face and polyhedron, respectively. The mesh size parameter \(h\) is defined as the maximum of \(h_{K}\).
The following mesh regularity is assumed:

\begin{assumption}[Mesh Regularity]
  \label{ass:mesh}There exists a uniform constant \(\rho>0\) such that, for every
  polyhedron \(K\in \mathcal{T}_{h}\),
  \begin{enumerate}
    \def\labelenumi{(\roman{enumi})}
    \item
          \(K\) is star-shaped with respect to a ball of radius
          \(\geq \rho h_K\);
    \item
          every face \(F\) of \(\partial K\) is star-shaped with respect to a
          disk of radius \(\geq \rho h_F\);
    \item
          for every face \(F\) of \(\partial K\), every edge \(E\) of
          \(\partial F\) satisfies \( h_E \geq \rho h_F \geq \rho^2 h_K\).
  \end{enumerate}
\end{assumption}

\begin{definition}[Face Virtual Element Space]
  Define:
  \begin{equation}
    \begin{aligned}
      \mathbf{V}_{k, k_{\mathrm{d}}, k_{\mathrm{r}}}^{\mathrm{f}}(K):= & \left\{\boldsymbol{v}\in\mathbf{L}^2(K) \mid \boldsymbol{v} \cdot \boldsymbol{n}_F \in \mathbb{P}_k(F)\ \forall F\in \partial K,\right.           \\
                                                                       & \ \left.\nabla \cdot \boldsymbol{v} \in \mathbb{P}_{k_{\mathrm{d}}}(K), \nabla \times \boldsymbol{v} \in\mathbb{P}_{k_{\mathrm{r}}}^3(K)\right\},
    \end{aligned}
  \end{equation}
  with DoFs:
  \begin{subequations}
    \label{eq:dofvf}
    \begin{align}
       & \left(\boldsymbol{v} \cdot \boldsymbol{n}, p_{k}\right)_{F}                                &  & \forall F\subseteq\partial K,p_{k}\in\mathbb{P}_k(F),\label{eq:dofvf1}                       \\
       & \left(\boldsymbol{v}, \nabla p_{k_{\mathrm{d}}}\right)_{K}                                 &  & \forall p_{k_{\mathrm{d}}}\in\mathbb{P}_{k_{\mathrm{d}}}(K),\label{eq:dofvf2}                \\
       & \left(\boldsymbol{v}, \boldsymbol{x}_{K} \times \boldsymbol{p}_{k_{\mathrm{r}}}\right)_{K} &  & \forall \boldsymbol{p}_{k_{\mathrm{r}}}\in\mathbb{P}_{k_{\mathrm{r}}}^3(K).\label{eq:dofvf3}
    \end{align}
  \end{subequations}
\end{definition}

\begin{definition}[Edge Virtual Element Space]
  Define:
  \begin{equation}
    \begin{aligned}
      \mathbf{V}_{k, k_{\mathrm{d}}, k_{\mathrm{r}}}^{\mathrm{e}}(F):= & \left\{\boldsymbol{v} \in \mathbf{L}^2(F)\mid \boldsymbol{v}\cdot \boldsymbol{t}\in \mathbb{P}_{k}(E)\ \forall E \subseteq \partial F,\right.                        \\
                                                                       & \ \left. \nabla_{F} \cdot \boldsymbol{v} \in \mathbb{P}_{k_{\mathrm{d}}}(F), \nabla_{F} \times \boldsymbol{v} \in \mathbb{P}_{k_{\mathrm{r}}}\left(F\right)\right\},
    \end{aligned}
  \end{equation}
  \begin{equation}
    \begin{aligned}
      \mathbf{V}_{\boldsymbol{\beta},k_{\mathrm{d}},\mu_{\mathrm{r}}}^{\mathrm{e}}(K):= & \left\{\boldsymbol{v} \in \mathbf{L}^2(K)\mid \boldsymbol{v}^F \in \mathbf{V}_{\boldsymbol{\beta}}^{\mathrm{e}}(F)\ \forall F \subseteq \partial K,\right.                                                                                                               \\
                                                                                        & \ \left.\nabla \cdot \boldsymbol{v} \in \mathbb{P}_{k_{\mathrm{d}}}(K), \nabla \times \boldsymbol{v} \in \mathbf{V}^{\mathrm{f}}_{\boldsymbol{\mu}}\left(K\right), \boldsymbol{v} \cdot \boldsymbol{t}_E \text { is continuous } \forall E \subseteq \partial F\right\},
    \end{aligned}
  \end{equation}
  where
  \(\boldsymbol{\beta}=\left(\beta,\beta_{\mathrm{d}},\beta_{\mathrm{r}}\right), \boldsymbol{\mu}=\left(\beta_{\mathrm{r}},-1,\mu_{\mathrm{r}}\right)\),
  with DoFs:
  \begin{subequations}\label{eq:dofve}
    \begin{align}
       & \left(\boldsymbol{v}\cdot\boldsymbol{t}, p_{\beta}\right)_{E}
       &                                                                                                        & \forall E\subseteq\partial K, p_{\beta}\in\mathbb{P}_{\beta}(E),\label{eq:dofve1}                           \\
       & \left(\boldsymbol{v}, \boldsymbol{x}_{F} p_{\beta_{\mathrm{d}}}\right)_{F}
       &                                                                                                        & \forall F\subseteq\partial K, p_{\beta_{\mathrm{d}}}\in\mathbb{P}_{\beta_{\mathrm{d}}}(F),\label{eq:dofve2} \\
       & \left(\boldsymbol{v}, \nabla_{F}\times p_{\beta_{\mathrm{r}}}\right)_{F}
       &                                                                                                        & \forall F\subseteq\partial K, p_{\beta_{\mathrm{r}}}\in\mathbb{P}_{\beta_{\mathrm{r}}}(F),\label{eq:dofve3} \\
       & \left(\nabla\times\boldsymbol{v}, \boldsymbol{x}_{K}\times\boldsymbol{p}_{\mu_{\mathrm{r}}}\right)_{K}
       &                                                                                                        & \forall \boldsymbol{p}_{\mu_{\mathrm{r}}}\in\mathbb{P}^3_{\mu_{\mathrm{r}}}(K),\label{eq:dofve4}            \\
       & \left(\boldsymbol{v}, \boldsymbol{x}_{K} p_{k_{\mathrm{d}}}\right)_{K}
       &                                                                                                        & \forall p_{k_{\mathrm{d}}}\in\mathbb{P}_{k_{\mathrm{d}}}(K).\label{eq:dofve5}
    \end{align}
  \end{subequations}
\end{definition}

\begin{definition}[Node Virtual Element Space]
  \label{def:vn}Let \(k_{\mathrm{v}}\leq k_{\mathrm{f}}\), define:
  \begin{equation}
    \begin{aligned}
      \mathbf{V}^{\mathrm{n}}_{k,k_{\mathrm{f}},k_{\mathrm{v}}}\left(K\right):= & \left\{v\in H^1(K) \mid v_{\mid E}\in \mathbb{P}_{k}\left(E\right)\ \forall E\subseteq \partial F\ \forall F\subseteq \partial K,\right. \\
                                                                                & \ \left.\nabla v \in \mathbf{V}_{\boldsymbol{\beta},k_{\mathrm{v}},-1}^{\mathrm{e}}\left(K\right)\right\},
    \end{aligned}
  \end{equation}
  where \(\boldsymbol{\beta}=\left(k-1,k_{\mathrm{f}},-1\right)\), with
  DoFs:
  \begin{subequations}\label{eq:dofvn}
    \begin{align}
       & v\left(V\right)                        &  & \forall V\subseteq\partial K,\label{eq:dofvn1}                                                      \\
       & \left(v, p_{k-2}\right)_{E}            &  & \forall E\subseteq\partial K, p_{k-2}\in\mathbb{P}_{k-2}(E),\label{eq:dofvn2}                       \\
       & \left(v, p_{k_{\mathrm{f}}}\right)_{F} &  & \forall F\subseteq\partial K, p_{k_{\mathrm{f}}}\in\mathbb{P}_{k_{\mathrm{f}}}(F),\label{eq:dofvn3} \\
       & \left(v, p_{k_{\mathrm{v}}}\right)_{K} &  & \forall p_{k_{\mathrm{v}}}\in\mathbb{P}_{k_{\mathrm{v}}}(K).\label{eq:dofvn4}
    \end{align}
  \end{subequations}
\end{definition}

Then we define the local serendipity projectors and spaces. The aim is to construct a space \(\mathbf{V}^{\mathrm{S}}_{k,l}(K)\), such that:
\begin{itemize}
  \item \(\mathbb{P}^{q}_{k,d}(K)\subseteq \mathbf{V}^{\mathrm{S}}_{k,l}(K)\) can be identified by the serendipity projector \(\bfitgreek{\Pi}^{\mathrm{S}}_{k}\) with the preserved DoFs,
  \item \(\bfitgreek{\Pi}_{l}\) is computable combined with the reduced DoFs, and stable if \(l\) is sufficiently large,
  \item exact commuting diagrams (\cref{thm:excom}) hold.
\end{itemize}

\begin{definition}[Novel Serendipity Projectors]
  \label{def:sp}Given sufficiently large \(k_{\mathrm{r}}\), define \(\bfitgreek{\Pi}^{\mathrm{Sf}}_{k}:\mathbf{H}(\operatorname{div},K)\rightarrow\mathbb{P}^3_{k}(K)\) as:
  \begin{subequations}\label{eq:sf}
    \begin{align}
       & \left(\left(\boldsymbol{v}-\bfitgreek{\Pi}^{\mathrm{Sf}}_{k}\boldsymbol{v}\right)\cdot \boldsymbol{n}, \nabla \times \boldsymbol{p}_{k+1} \cdot \boldsymbol{n}\right)_{\partial K}=0 &  & \forall \boldsymbol{p}_{k+1}\in\mathbb{P}_{k+1}^3(K),\label{eq:sf1}                       \\
       & \left(\nabla\cdot\left(\boldsymbol{v}-\bfitgreek{\Pi}^{\mathrm{Sf}}_{k}\boldsymbol{v}\right), p_{k-1}\right)_{K}=0                                                                   &  & \forall p_{k-1}\in\mathbb{P}_{k-1}(K),\label{eq:sf2}                                      \\
       & \left(\boldsymbol{v}-\bfitgreek{\Pi}^{\mathrm{Sf}}_{k}\boldsymbol{v}, \boldsymbol{x}_{K} \times \boldsymbol{p}_{k_{\mathrm{r}}}\right)_{K}=0                                         &  & \forall \boldsymbol{p}_{k_{\mathrm{r}}}\in\mathbb{P}_{k_{\mathrm{r}}}^3(K).\label{eq:sf3}
    \end{align}
  \end{subequations}

  Given sufficiently large \(\beta_{\mathrm{d}}\), define \(\tilde{\bfitgreek{\Pi}}^{\mathrm{Se}}_{k}:\mathbf{H}(\operatorname{rot},F) \rightarrow\mathbb{P}^2_{k}(F)\) as:
  \begin{subequations}\label{eq:sef}
    \begin{align}
       & \left(\left(\boldsymbol{v}-\tilde{\bfitgreek{\Pi}}^{\mathrm{Se}}_{k}\boldsymbol{v}\right)\cdot\boldsymbol{t}, \nabla p_{k+1} \cdot \boldsymbol{t}\right)_{\partial F}=0
       &                                                                                                                                                                         & \forall p_{k+1}\in\mathbb{P}_{k+1}(F),                               \\
       & \left(\nabla_{F}\times \left(\boldsymbol{v}-\tilde{\bfitgreek{\Pi}}^{\mathrm{Se}}_{k}\boldsymbol{v}\right), p_{k-1}\right)_{F}=0
       &                                                                                                                                                                         & \forall p_{k-1}\in\mathbb{P}_{k-1}(F),                               \\
       & \left(\boldsymbol{v}-\tilde{\bfitgreek{\Pi}}^{\mathrm{Se}}_{k}\boldsymbol{v}, \boldsymbol{x}_{F} p_{\beta_{\mathrm{d}}}\right)_{F}=0
       &                                                                                                                                                                         & \forall p_{\beta_{\mathrm{d}}}\in\mathbb{P}_{\beta_{\mathrm{d}}}(F).
    \end{align}
  \end{subequations}

  Given sufficiently large \(k_{\mathrm{d}}\) and \(\mu_{\mathrm{r}}\), define \(\bfitgreek{\Pi}^{\mathrm{Se}}_{k}:\mathbf{H}(\operatorname{\mathbf{curl}},K)\rightarrow\mathbb{P}^3_{k}(K)\) as:
  \begin{subequations}\label{eq:se}
    \begin{align}
       & \left(\left(\boldsymbol{v}-\bfitgreek{\Pi}^{\mathrm{Se}}_{k}\boldsymbol{v}\right)\times\boldsymbol{n},\nabla p_{k+1}\times\boldsymbol{n}\right)_{\partial K}=0
       &                                                                                                                                                                                              & \forall F\subseteq\partial K, p_{k+1}\in\mathbb{P}_{k+1}(F),\label{eq:se1}                     \\
       & \left(\nabla\times\left(\boldsymbol{v}-\bfitgreek{\Pi}^{\mathrm{Se}}_{k}\boldsymbol{v}\right)\cdot \boldsymbol{n}, \nabla\times\boldsymbol{p}_{k} \cdot \boldsymbol{n}\right)_{\partial K}=0
       &                                                                                                                                                                                              & \forall\boldsymbol{p}_{k}\in\mathbb{P}_{k}^3(K),\label{eq:se2}                                 \\
       & \left(\boldsymbol{v}-\bfitgreek{\Pi}^{\mathrm{Se}}_{k}\boldsymbol{v}, \boldsymbol{x}_{K} p_{k_{\mathrm{d}}}\right)_{K}=0
       &                                                                                                                                                                                              & \forall p_{k_{\mathrm{d}}}\in\mathbb{P}_{k_{\mathrm{d}}}(K),\label{eq:se3}                     \\
       & \left(\nabla \times\left(\boldsymbol{v}-\bfitgreek{\Pi}^{\mathrm{Se}}_{k}\boldsymbol{v}\right),\boldsymbol{x}_{K}\times \boldsymbol{p}_{\mu_{\mathrm{r}}}\right)_K=0
       &                                                                                                                                                                                              & \forall \boldsymbol{p}_{\mu_{\mathrm{r}}}\in \mathbb{P}^3_{\mu_{\mathrm{r}}}(K).\label{eq:se4}
    \end{align}
  \end{subequations}

  Define \(\tilde{\bfitgreek{\Pi}}^{\mathrm{Sn}}_{k}:\mathrm{H}^1(F)\rightarrow\mathbb{P}_{k}(F)\) as:
  \begin{subequations}
    \begin{align}
       & \left(\partial_{\boldsymbol{t}}\left(v-\tilde{\bfitgreek{\Pi}}^{\mathrm{Sn}}_{k}v\right), \partial_{\boldsymbol{t}} p_{k}\right)_{\partial F}=0   &  & \forall p_{k}\in\mathbb{P}_{k}(F), \\
       & \left(v-\tilde{\bfitgreek{\Pi}}^{\mathrm{Sn}}_{k}v, \boldsymbol{n}\cdot \boldsymbol{x}_F\right)_{\partial F}=0.
    \end{align}
  \end{subequations}

  Define \(\bfitgreek{\Pi}^{\mathrm{Sn}}_{k}:\mathrm{H}^1(K)\rightarrow\mathbb{P}_{k}(K)\) as:
  \begin{subequations}
    \begin{align}
       & \left(\nabla_F\left(v-\bfitgreek{\Pi}^{\mathrm{Sn}}_{k}v\right), \nabla_F p_{k}\right)_{\partial K}=0   &  & \forall p_{k}\in\mathbb{P}_{k}(K), \\
       & \left(v-\bfitgreek{\Pi}^{\mathrm{Sn}}_{k}v, \boldsymbol{n}\cdot \boldsymbol{x}_K\right)_{\partial K}=0.
    \end{align}
  \end{subequations}
\end{definition}

\begin{proposition}
  \label{prop:sf}\(\bfitgreek{\Pi}^{\mathrm{Sf}}_{k}\) is well-defined.
\end{proposition}

\begin{proof}
  Replacing \(\boldsymbol{v}-\bfitgreek{\Pi}^{\mathrm{Sf}}_{k}\boldsymbol{v}\) in \cref{eq:sf} by \(\boldsymbol{p}\in \mathbb{P}^3_{k}(K)\), we need to check that these conditions imply \(\boldsymbol{p}=0\). Since \cref{eq:sf2} implies that \(\nabla\cdot\boldsymbol{p}=0\). Combining \cref{eq:sf1}, \(\boldsymbol{p}\) has zero normal trace. In fact, since the following equivalent Poission problem:
  \begin{equation}
    \begin{cases}
      \Delta s=0                   & \text{in }K,          \\
      \partial_{\boldsymbol{n}}s=0 & \text{on }\partial K,
    \end{cases}
  \end{equation}
  has the only solution \(\nabla s=0\), \(\boldsymbol{p}\) is not a gradient of a polynomial. By \cref{eq:pdcomp1}, we deduce that \(\boldsymbol{p}=\boldsymbol{x}_K\times\boldsymbol{r}_{k-1}\) for some \(\boldsymbol{r}_{k-1}\in\mathbb{P}_{k-1}^3(K)\). Finally \cref{eq:sf3} with sufficiently large \(k_{\mathrm{r}}\) ensures \(\boldsymbol{p}=0\).
\end{proof}

\begin{proposition}
  \label{prop:se}\(\tilde{\bfitgreek{\Pi}}^{\mathrm{Se}}_{k}\) and \(\bfitgreek{\Pi}^{\mathrm{Se}}_{k}\) are well-defined.
\end{proposition}

\begin{proof}
  The proof for \(\tilde{\bfitgreek{\Pi}}^{\mathrm{Se}}_{k}\) can be found in \cite[Sections 5 and 6]{beiraodaveigaVirtualElementApproximation2017}.

  Replacing \(\boldsymbol{v}-\bfitgreek{\Pi}^{\mathrm{Se}}_{k}\boldsymbol{v}\) in \cref{eq:se} by \(\boldsymbol{p}\in \mathbb{P}^3_{k}(K)\), we need to check that these conditions imply \(\boldsymbol{p}=0\). By \cref{prop:sf}, \cref{eq:se2,eq:se4} imply that \(\nabla\times\boldsymbol{p}=0\). Combining \cref{eq:se1}, \(\boldsymbol{p}\) has zero tangent trace. Similarly, since the following equivalent curl-curl problem:
  \begin{equation}
    \begin{cases}
      \nabla\times\nabla\times\boldsymbol{s}=0                      & \text{in }K,          \\
      \left(\nabla\times\boldsymbol{s}\right)\times\boldsymbol{n}=0 & \text{on }\partial K,
    \end{cases}
  \end{equation}
  has the only solution \(\nabla\times\boldsymbol{s}=0\), \(\boldsymbol{p}\) is not a curl of a polynomial. By \cref{eq:pdcomp2}, we deduce that \(\boldsymbol{p}=\boldsymbol{x}_{K}r_{k-1}\) for some \(r_{k-1}\in\mathbb{P}_{k-1}(K)\). Finally \cref{eq:se3} with sufficiently large \(k_{\mathrm{d}}\) ensures \(\boldsymbol{p}=0\).
\end{proof}

\begin{proposition}
  \(\tilde{\bfitgreek{\Pi}}^{\mathrm{Sn}}_{k}\) and \(\bfitgreek{\Pi}^{\mathrm{Sn}}_{k}\) are well-defined.
\end{proposition}

\begin{proof}
  See \cite[Proposition 5.1]{beiraodaveigaVirtualElementApproximation2017}, and proof there can be easily adapted to our definition.
\end{proof}

It is worth noting that there is much flexibility in choosing the orders of the DoFs for each VEM space. For example, the lowest order \(\mathbf{H}(\operatorname{div})\)-conforming VEM space with \(k,k_{\mathrm{d}}=k,k_{\mathrm{r}}=k-1\) resembles the Raviart-Thomas element, while the choice of \(k,k_{\mathrm{d}}=k-1,k_{\mathrm{r}}=k-1\) resembles the Brezzi-Douglas-Marini element. Similar analogies can be drawn for the \(\mathbf{H}(\operatorname{\mathbf{curl}})\)-conforming and \(\mathrm{H}^1\)-conforming VEM spaces with appropriate choices of DoF orders. Without loss of generality, we will focus on the following commonly used choices in defining serendipity spaces. Note that if \(k=0\), \(k_{\mathrm{d}}\) for \(\mathbf{V}^{\mathrm{f}}\), \(\beta_{\mathrm{r}}\) for \(\mathbf{V}^{\mathrm{e}}\) should also be set to \(0\) instead of \(-1\), to ensure the linear independence of the boundary DoFs.

\begin{definition}[Serendipity Face Virtual Element Spaces]
  \label{def:vsf}Let \(k,k_{\mathrm{r}}\) identify \(\bfitgreek{\Pi}^{\mathrm{Sf}}_{k}\), then define \(\mathbf{V}_{k,l}^{\mathrm{Sf}}(K)\) as:
  \begin{equation}\label{eq:equivsf}
    \left\{\boldsymbol{v} \in \mathbf{V}_{k, k-1, l-1}^{\mathrm{f}}(K)\mid\left(\boldsymbol{v}-\bfitgreek{\Pi}^{\mathrm{Sf}}_{k}\boldsymbol{v}, \boldsymbol{x}_{K} \times \boldsymbol{p}_{k_{\mathrm{r}}}\right)_{K}=0\ \forall \boldsymbol{p}_{k_{\mathrm{r}}}\in\mathbb{P}_{k_{\mathrm{r}}+1:l-1}^3(K)\right\}.
  \end{equation}
\end{definition}

As for the edge space, to ensure boundary continuity, the serendipity boundary space must be independently defined on each face.

\begin{definition}[Serendipity Edge Virtual Element Space]
  \label{def:vse}Let \(\beta=k,\beta_{\mathrm{r}}=k-1,\beta_{\mathrm{d}}\) identify \(\tilde{\bfitgreek{\Pi}}^{\mathrm{Se}}_{k}\) on each face \(F\subseteq \partial K\), then define \(\mathbf{V}^{\mathrm{Se}}_{k,l}(F)\) as:
  \begin{equation}\label{eq:equivsef}
    \left\{\boldsymbol{v} \in \mathbf{V}^{\mathrm{e}}_{k,l-1,k-1}\left(F\right)\mid \left(\boldsymbol{v}-\tilde{\bfitgreek{\Pi}}^{\mathrm{Se}}_{k}\boldsymbol{v}, \boldsymbol{x}_{F} p_{\beta_{\mathrm{d}}}\right)=0\ \forall p_{\beta_{\mathrm{d}}}\in\mathbb{P}_{\beta_{\mathrm{d}}+1:l-1}(F)\right\}.
  \end{equation}

  In the body, let \(\boldsymbol{\beta}, k_{\mathrm{d}}, \mu_{\mathrm{r}}\) identify \(\bfitgreek{\Pi}^{\mathrm{Se}}_{k}\), then define \(\mathbf{V}^{\mathrm{Se}}_{k,l}\) as:
  \begin{equation}\label{eq:equivse}
    \begin{aligned}
      &\left\{\boldsymbol{v}\in \mathbf{V}_{\boldsymbol{\beta},k_{\mathrm{d}},\mu_{\mathrm{r}}}^{\mathrm{e}}(K)\mid \boldsymbol{v}_h^F \in \mathbf{V}^{\mathrm{Se}}_{k,l_F}(F)\ \forall F \subseteq \partial K,\right.            \\
                                                              & \ \left(\boldsymbol{v}-\bfitgreek{\Pi}^{\mathrm{Se}}_{k}\boldsymbol{v}, \boldsymbol{x}_{K} p_{k_{\mathrm{d}}}\right)_K=0\ \forall p_{k_{\mathrm{d}}}\in\mathbb{P}_{k_{\mathrm{d}}+1:l-1}(K), \\
      &\ \left.\left(\nabla \times\left(\boldsymbol{v}-\bfitgreek{\Pi}^{\mathrm{Se}}_{k}\boldsymbol{v}\right),\boldsymbol{x}_K\times \boldsymbol{p}_{\mu_{\mathrm{r}}}\right)_K=0\ \forall \boldsymbol{p}_{\mu_{\mathrm{r}}}\in \mathbb{P}^3_{\mu_{\mathrm{r}}+1:l-2}(K)\right\}.
    \end{aligned}
  \end{equation}
\end{definition}

\begin{definition}[Serendipity Node Virtual Element Space]
  \label{def:vsn}Define \(\mathbf{V}_{k, l}^{\mathrm{Sn}}(K)\) as:
  \begin{equation}
    \begin{aligned}
      &\left\{v \in \mathbf{V}^{\mathrm{n}}_{k, l-2, l-2}(K)\mid \nabla v \in \mathbf{V}_{k-1,l-1}^{\mathrm{Se}}\left(K\right)\right\}.
    \end{aligned}
  \end{equation}
\end{definition}

Now we will show the interpolation properties of these serendipity
spaces. The proof is just reiterating the proof of the standard virtual element space interpolation properties presented in \cite{daveigaInterpolationStabilityEstimates2022}, with our serendipity projector replacing the polynomial projector in the proof, and finally applying the interpolation properties of the serendipity projector, which will be shown later.

First, we recall some essential properties of the virtual element spaces.

\begin{theorem}[Exact Commuting Diagrams]
  \label{thm:excom}In an element \(K\), we have the following exact commuting diagrams:
  \begin{equation}
    \begin{tikzcd}
      \mathrm{H}^1 \arrow[r, "\nabla"] \arrow[d, "\boldsymbol{I}^{\mathrm{Sn}}_{k}"] & \mathbf{H}(\operatorname{\mathbf{curl}})  \arrow[r, "\nabla\times"] \arrow[d, "\boldsymbol{I}^{\mathrm{Se}}_{k-1}"]
      & \mathbf{H}(\operatorname{div})   \arrow[r, "\nabla\cdot"] \arrow[d, "\boldsymbol{I}^{\mathrm{Sf}}_{k-2}"]
      & \mathrm{L}^2              \arrow[d, "\bfitgreek{\Pi}_{k-3}"] \\
      \mathbf{V}^{\mathrm{Sn}}_{k} \arrow[r, "\nabla"] \arrow[d, "\bfitgreek{\Pi}^{\mathrm{Sn}}_{k}"]               & \mathbf{V}^{\mathrm{Se}}_{k-1}               \arrow[r, "\nabla\times"] \arrow[d, "\bfitgreek{\Pi}^{\mathrm{Se}}_{k-1}"]
      & \mathbf{V}^{\mathrm{Sf}}_{k-2}              \arrow[r, "\nabla\cdot"] \arrow[d, "\bfitgreek{\Pi}^{\mathrm{Sf}}_{k-2}"]
      & \mathbb{P}_{k-3} \arrow[d, "\boldsymbol{E}"]\\
      \mathbb{P}_{k} \arrow[r, "\nabla"]                & \mathbb{P}^{3}_{k-1}               \arrow[r, "\nabla\times"]
      & \mathbb{P}^{3}_{k-2}              \arrow[r, "\nabla\cdot"]
      & \mathbb{P}_{k-3}\\
    \end{tikzcd}
  \end{equation}
\end{theorem}

\begin{proof}
  See \cite[Theorem 8.2]{daveigaDivCurlConforming2014} for the exact commuting properties of the virtual element spaces. The commuting properties of the serendipity projectors follow trivially from the definition of the serendipity projectors, i.e., \cref{def:vsn,def:vse,def:vsf}.
\end{proof}

\begin{theorem}[Inverse Estimates]
  \label{thm:invest}
  There exists some \(1/2<s\leq 1\) depending on the shape regularity of \(K\) such that the following inverse estimates hold true:
  \begin{subequations}
    \begin{align}
       & \left\|\nabla \times \boldsymbol{v}\right\|_{K}\lesssim h^{-1}_{K}\left\|\boldsymbol{v}\right\|_{K} & \forall \boldsymbol{v}\in \mathbf{V}^{\mathrm{Se}}_{k,l}\left(K\right), \label{eq:invest1}                                                          \\
       & \left\|\nabla \cdot \boldsymbol{v}\right\|_{K}\lesssim h^{-1}_{K}\left\|\boldsymbol{v}\right\|_{K}  & \forall \boldsymbol{v}\in \mathbf{V}^{\mathrm{Sf}}_{k,l}\left(K\right), \label{eq:invest2}                                                          \\
       & |\boldsymbol{v}|_{s,K} \lesssim h^{-s}_{K}\left\|\boldsymbol{v}\right\|_{K}                         & \forall \boldsymbol{v}\in \mathbf{V}^{\mathrm{Sn}}_{k,l}\left(K\right) \text{ or } \mathbf{V}^{\mathrm{Sf}}_{k,l}\left(K\right). \label{eq:invest3}
    \end{align}
  \end{subequations}
\end{theorem}

\begin{proof}
  For the first two estimates, see the proof of \cite[(30)]{daveigaInterpolationStabilityEstimates2022}, and the proof can be easily adapted to our definition. Indeed, the bubble function technique is applicable to any finite dimensional space. Since for \(\boldsymbol{v} \in \mathbf{V}^{\mathrm{Sf}}_{k,l}(K)\), \(\nabla \cdot \boldsymbol{v}\) and \(\nabla \times \boldsymbol{v}\) are both polynomials, the inverse estimates for them hold true by the standard polynomial inverse estimates. For \(\boldsymbol{v} \in \mathbf{V}^{\mathrm{Se}}_{k,l}(K)\), by noting \(\nabla \times \boldsymbol{v} \in \mathbf{V}^{\mathrm{Sf}}_{k-1,l-1}(K)\), and applying the previous inverse estimates, the inverse estimates for \(\nabla \times \boldsymbol{v}\) hold true.

  For the third estimate, see the proof of \cite[Lemma 3]{beiraodaveigaVirtualElementsMaxwell2022}. Just consider the auxiliary Laplacian or curl-curl problem and prove by using the regularity of the solution, embedding properties of \(\mathbf{H}\left(\operatorname{div},K\right)\cap \mathbf{H}\left(\operatorname{\mathbf{curl}},K\right)\), and previous inverse estimates.
\end{proof}

\begin{theorem}[Basic Bounds]
  The following bounds hold true\cite[(77, 105)]{daveigaInterpolationStabilityEstimates2022}:

  For each \(\boldsymbol{v}\in \mathbf{V}_{k, k_{\mathrm{d}}, k_{\mathrm{r}}}^{\mathrm{f}}(K)\), we have:
  \begin{equation}\label{eq:boundf}
    \|\boldsymbol{v}\|_{K}\lesssim h_{K}\left\|\nabla \cdot \boldsymbol{v}\right\|_{K}+h^{\frac{1}{2}}_{K}\|\boldsymbol{v}\cdot \boldsymbol{n}\|_{\partial K}+\sup_{\boldsymbol{p}_{k_{\mathrm{r}}}\in \mathbb{P}^{3}_{k_{\mathrm{r}}}\left(K\right)}\frac{\left(\boldsymbol{v},\boldsymbol{x}_{K} \times \boldsymbol{p}_{k_{\mathrm{r}}}\right)_{K}}{\left\|\boldsymbol{x}_{K} \times \boldsymbol{p}_{k_{\mathrm{r}}}\right\|_{K}}.
  \end{equation}

  And for each \(\boldsymbol{v}\in \mathbf{V}_{\boldsymbol{\beta},k_{\mathrm{d}},\mu_{\mathrm{r}}}^{\mathrm{e}}(K)\), we have:
  \begin{equation}\label{eq:bounde}
    \begin{aligned}
      \|\boldsymbol{v}\|_{K} & \lesssim\sum_{F\subseteq \partial K}\left(h^{\frac{3}{2}}_{K}\left\|\nabla \times \boldsymbol{v}\cdot \boldsymbol{n}\right\|_{F}+h_{K}\left\|\boldsymbol{v}\cdot \boldsymbol{t}\right\|_{\partial F}+\sup_{p_{\beta_{\mathrm{d}}}\in \mathbb{P}_{\beta_{\mathrm{d}}}\left(F\right)} \frac{h^{\frac{1}{2}}_{K}\left(\boldsymbol{v},\boldsymbol{x}_{F}p_{\beta_{\mathrm{d}}}\right)_{F}}{\left\|\boldsymbol{x}_{F}p_{\beta_{\mathrm{d}}}\right\|_{F}}\right)                                             \\
                             & \quad+\sup_{\boldsymbol{p}_{\mu_{\mathrm{r}}}\in \mathbb{P}^{3}_{\mu_{\mathrm{r}}}\left(K\right)}\frac{\left(\nabla \times \boldsymbol{v},\boldsymbol{x}_{K} \times \boldsymbol{p}_{\mu_{\mathrm{r}}}\right)_{K}}{\left\|\boldsymbol{x}_{K} \times \boldsymbol{p}_{\mu_{\mathrm{r}}}\right\|_{K}}+\sup_{p_{k_{\mathrm{d}}}\in \mathbb{P}_{k_{\mathrm{d}}}\left(K\right)}\frac{\left(\boldsymbol{v},\boldsymbol{x}_K p_{k_{\mathrm{d}}}\right)_{K}}{\left\|\boldsymbol{x}_K p_{k_{\mathrm{d}}}\right\|_{K}}.
    \end{aligned}
  \end{equation}
\end{theorem}

Next, we define some norms induced by the serendipity projectors:
\begin{definition}[Serendipity Norms]
  Define the following norms on \(\mathbb{P}^3_{k}(K)\):
  \begin{subequations}
    \begin{align}
       & \begin{aligned}
           \normmm{\boldsymbol{v}}_{\mathrm{Sf},k,K}:= & \alpha\sup_{p_{k-1}\in \mathbb{P}_{k-1}\left(K\right)} \frac{h_{K}\left(\nabla \cdot \boldsymbol{v},p_{k-1}\right)_{K}}{\left\|p_{k-1}\right\|_{K}}                                                                                                                                                  \\
                                                       & +\sup_{\boldsymbol{p}_{k+1}\in \mathbb{P}_{k+1}^{3}\left(K\right)} \frac{h^{\frac{1}{2}}_{K}\left(\boldsymbol{v}\cdot \boldsymbol{n},\nabla \times \boldsymbol{p}_{k+1}\cdot \boldsymbol{n}\right)_{\partial K}}{\left\|\nabla \times \boldsymbol{p}_{k+1}\cdot \boldsymbol{n}\right\|_{\partial K}} \\
                                                       & +\sup_{\boldsymbol{p}_{\mu_{\mathrm{r}}}\in \mathbb{P}_{\mu_{\mathrm{r}}}^{3}\left(K\right)}\frac{\left(\boldsymbol{v},\boldsymbol{x}_K \times \boldsymbol{p}_{\mu_{\mathrm{r}}}\right)_{K}}{\left\|\boldsymbol{x}_K \times \boldsymbol{p}_{\mu_{\mathrm{r}}}\right\|_{K}},
         \end{aligned}\label{eq:normvf}                                          \\
       & \begin{aligned}
           \normmm{\boldsymbol{v}}_{\mathrm{Se},k,K}:= & \beta h_{K}\normmm{\nabla \times \boldsymbol{v}}_{\mathrm{Sf},k-1,K}+\sup_{p_{k+1}\in \mathbb{P}_{k+1}\left(K\right)} \frac{h^{\frac{1}{2}}_{K}\left(\boldsymbol{v}\times\boldsymbol{n},\nabla p_{k+1}\times\boldsymbol{n}\right)_{\partial K}}{\left\|\nabla p_{k+1}\times\boldsymbol{n}\right\|_{\partial K}} \\
                                                       & +\sup_{p_{k_{\mathrm{d}}}\in \mathbb{P}_{k_{\mathrm{d}}}\left(K\right)}\frac{\left(\boldsymbol{v},\boldsymbol{x}_K p_{k_{\mathrm{d}}}\right)_{K}}{\left\|\boldsymbol{x}_K p_{k_{\mathrm{d}}}\right\|_{K}},
         \end{aligned}\label{eq:normve}
    \end{align}
  \end{subequations}
  where \(\alpha\) and \(\beta\) are chosen such that the coefficients in \cref{eq:const1,eq:const2} are positive.

  Note that as functionals, they are meaningful for smooth enough functions.
\end{definition}

\begin{lemma}
  \label{lem:sfsubbound}The following bound holds true: \(\forall \boldsymbol{q}_{k+1}\in\mathbb{P}_{k+1}^{3}(K)\)
  \begin{equation}\label{eq:contrasf}
    \begin{aligned}
       & h^{\frac{1}{2}}_{K}\left\|\nabla \times \boldsymbol{q}_{k+1}\cdot \boldsymbol{n}\right\|_{\partial K}+\sup_{\boldsymbol{p}_{\mu_{\mathrm{r}}}\in \mathbb{P}_{\mu_{\mathrm{r}}}^{3}\left(K\right)}\frac{\left(\nabla \times \boldsymbol{q}_{k+1},\boldsymbol{x}_K \times \boldsymbol{p}_{\mu_{\mathrm{r}}}\right)_{K}}{\left\|\boldsymbol{x}_K \times \boldsymbol{p}_{\mu_{\mathrm{r}}}\right\|_{K}} \\
       & \gtrsim\left\|\nabla \times\boldsymbol{q}_{k+1}\right\|_{K}.
    \end{aligned}
  \end{equation}
\end{lemma}

\begin{proof}
  It suffices to prove the claim fixing \(h_{K}=1\), and then use a scaling argument. By the mesh assumption, there are concentric balls \(\tilde{B}\) and \(B\), with radius \(\rho\) and \(1+\rho\), satisfying:
  \begin{equation}
    \tilde{B}\subseteq K\subseteq B.
  \end{equation}

  Let \(S_{N_{\mathrm{v}}}\) denote the set of polytopes with \(N_{\mathrm{v}}\) vertices satisfying \cref{ass:mesh}. Setting the center of \(\tilde{B}\) and \(B\) at the origin, we can define an injective mapping \(\mathscr{M}:S_{N_{\mathrm{v}}}\to \mathbb{R}^{3N_{\mathrm{v}}}\) by:
  \begin{equation}
    K\mapsto\left[v_{1,x},v_{1,y},v_{1,z},\dots,v_{N_{\mathrm{v}},x},v_{N_{\mathrm{v}},y},v_{N_{\mathrm{v}},z}\right]^{\top},
  \end{equation}
  where \(v_{n,x},v_{n,y},v_{n,z}\) are the \(x,y,z\) coordinates of the \(n\)-th vertex. Note that we here assume the admissibility of the polytopes and well-defined order of the vertices, though it is not the case when \(S\) is only categorized by vertex number. But since the following proof is applicable to any further categorization of \(S_{N_{\mathrm{v}}}\), we resort to avoiding the tedious task.

  Now we will prove the claim by contradiction. If \cref{eq:contrasf} were false, there exists a sequence of polytopes \(\{K_{n}\in S_{N_{\mathrm{v}}}\}_{n\in \mathbb{N}}\), and a sequence of polynomials \(\{\nabla \times \boldsymbol{q}_{n}\in \nabla \times \mathbb{P}^{3}_{k+1}(B)\}_{n\in \mathbb{N}}\), such that:
  \begin{equation}\label{eq:pfcontrasf}
    \|\nabla \times \boldsymbol{q}_{n}\|_{K_{n}}=1,
    \|\nabla \times \boldsymbol{q}_{n}\cdot \boldsymbol{n}\|_{\partial K_{n}}\le\frac{1}{n},
    \sup_{\boldsymbol{p}_{\mu_{\mathrm{r}}}\in\mathbb{P}^{3}_{\mu_{\mathrm{r}}}\left(K_n\right)}
    \frac{(\nabla \times \boldsymbol{q}_{n},\,\boldsymbol{x}_K\times\boldsymbol{p}_{\mu_{\mathrm{r}}})_{K_{n}}}
    {\|\boldsymbol{x}_K\times\boldsymbol{p}_{\mu_{\mathrm{r}}}\|_{K_{n}}}\le\frac{1}{n}
    \ \forall n\in\mathbb{N}.
  \end{equation}

  Since \(\mathscr{M}(S_{N_{\mathrm{v}}})\) is bounded and closed, there exists a subsequence \(\{\mathscr{M}(K_{n_{i}})\}_{i\in \mathbb{N}}\) converging to \(\mathscr{M}(K)\), which corresponds to \(K\) as the limit of \(\{K_{n_{i}}\}_{i\in \mathbb{N}}\) in the sense that all vertex coordinates converge. Since:
  \begin{equation}
    \|\nabla \times \boldsymbol{q}_{n}\|_{\tilde{B}}\le\|\nabla \times \boldsymbol{q}_{n}\|_{K_{n}}=1
  \end{equation}

  is bounded, there exists another subsequence of \(\{\nabla \times \boldsymbol{q}_{n_{i}}\}_{i\in \mathbb{N}}\) converging to \(\nabla \times \boldsymbol{q}\).

  By standard polynomial properties, we have:
  \begin{equation}
    1=\|\nabla \times \boldsymbol{q}\|_{K}\le\|\nabla \times \boldsymbol{q}\|_{B}\lesssim\|\nabla \times \boldsymbol{q}\|_{\tilde{B}},
  \end{equation}
  which implies \(\nabla \times \boldsymbol{q}\neq 0\) in \(\tilde{B}\). But, following the same argument as in the proof to \cref{prop:sf}, the fact that last two terms in \cref{eq:pfcontrasf} diminish, implies \(\nabla \times \boldsymbol{q} =  0\) in \(K\) hence in \(\tilde{B}\). This is a contradiction, and thus the claim is proved.
\end{proof}

\begin{lemma}
  \label{lem:sfbound} For given \(s> 1/2\), the following bounds hold true:
  \begin{subequations}
    \begin{align}
       & \normmm{\boldsymbol{s}_{k}}_{\mathrm{Sf},k,K}\gtrsim\left\|\boldsymbol{s}_{k}\right\|_K                                                                     & \forall \boldsymbol{s}_{k}\in\mathbb{P}_{k}^{3}(K),\label{eq:lsf1}                                                     \\
       & \normmm{\boldsymbol{v}_{h}}_{\mathrm{Sf},k,K}\lesssim\left\|\boldsymbol{v}_{h}\right\|_{K}                                                                  & \forall \boldsymbol{v}_{h}\in\mathbf{V}^{\mathrm{Sf}}_{k, l}\left(K\right),\label{eq:lsf2}                             \\
       & \normmm{\boldsymbol{v}}_{\mathrm{Sf},k,K}\lesssim\|\boldsymbol{v}\|_{K}+h^{s}_{K}|\boldsymbol{v}|_{s,K}+h_{K}\left\|\nabla \cdot \boldsymbol{v}\right\|_{K} & \forall \boldsymbol{v}\in \mathbf{H}^{s}\left(K\right)\cap \mathbf{H}\left(\operatorname{div},K\right).\label{eq:lsf3}
    \end{align}
  \end{subequations}
\end{lemma}

\begin{proof}
  We first prove \cref{eq:lsf1}. Since \(\boldsymbol{s}_{k}\in\mathbb{P}_{k}^{3}(K)\). Decompose \(\boldsymbol{s}_{k}=\nabla \times \boldsymbol{q}_{k+1}+\boldsymbol{x}r_{k-1}\) for some \(\boldsymbol{q}_{k+1}\in\mathbb{P}_{k+1}^{3}(K)\) and \(r_{k-1}\in\mathbb{P}_{k-1}(K)\) and supply the supremum in \cref{eq:normvf} with the corresponding components, we have:
  \begin{equation}
    \begin{aligned}
      \normmm{\boldsymbol{s}_{k}}_{\mathrm{Sf},k,K} & \geq\alpha h_{K}\left\|\nabla \cdot \boldsymbol{s}_{k}\right\|_{K}+\frac{h^{\frac{1}{2}}_{K}\left(\left(\nabla \times \boldsymbol{q}_{k+1}+\boldsymbol{x}_K r_{k-1}\right)\cdot \boldsymbol{n},\nabla \times \boldsymbol{q}_{k+1}\cdot \boldsymbol{n}\right)_{\partial K}}{\left\|\nabla \times \boldsymbol{q}_{k+1}\cdot \boldsymbol{n}\right\|_{\partial K}} \\
                                                    & \quad+\sup_{\boldsymbol{p}_{\mu_{\mathrm{r}}}\in \mathbb{P}_{\mu_{\mathrm{r}}}^{3}\left(K\right)}\frac{\left(\nabla \times \boldsymbol{q}_{k+1},\boldsymbol{x}_K\times \boldsymbol{p}_{\mu_{\mathrm{r}}}\right)_{K}}{\left\|\boldsymbol{x}_K \times \boldsymbol{p}_{\mu_{\mathrm{r}}}\right\|_{K}}                                                                            \\
                                                    & \geq\alpha h_{K}\left\|\nabla \cdot \boldsymbol{s}_{k}\right\|_{K}+h^{\frac{1}{2}}_{K}\left(\left\|\nabla \times \boldsymbol{q}_{k+1}\cdot \boldsymbol{n}\right\|_{\partial K}-\left\|\boldsymbol{x}_K r_{k-1}\cdot \boldsymbol{n}\right\|_{\partial K}\right)                                                                                                 \\
                                                    & \quad+\sup_{\boldsymbol{p}_{\mu_{\mathrm{r}}}\in \mathbb{P}_{\mu_{\mathrm{r}}}^{3}\left(K\right)}\frac{\left(\nabla \times \boldsymbol{q}_{k+1},\boldsymbol{x}_K\times \boldsymbol{p}_{\mu_{\mathrm{r}}}\right)_{K}}{\left\|\boldsymbol{x}_K \times \boldsymbol{p}_{\mu_{\mathrm{r}}}\right\|_{K}}.
    \end{aligned}
  \end{equation}

  Now we handle each term separately. First we have:
  \begin{equation}\label{eq:investdiv}
    h^{\frac{1}{2}}_{K}\left\|\boldsymbol{x}_K r_{k-1}\cdot \boldsymbol{n}\right\|_{\partial K}\stackrel{\cref{eq:polyinv1}}{\lesssim} \left\|\boldsymbol{x}_K r_{k-1}\right\|_{K}\stackrel{\cref{eq:iso3}}{\lesssim} h_{K}\left\|\nabla \cdot\left(\boldsymbol{x}_K r_{k-1}\right)\right\|_{K}=h_{K}\left\|\nabla \cdot \boldsymbol{s}_{k}\right\|_{K}.
  \end{equation}

  This, and \cref{lem:sfsubbound} lead to:
  \begin{equation}\label{eq:const1}
    \begin{aligned}
      \normmm{\boldsymbol{s}_{k}}_{\mathrm{Sf},k,K} & \geq\left(\gamma-C_{1}\right)h_{K}\left\|\nabla \cdot \boldsymbol{s}_{k}\right\|_{K}+C_{2}\left\|\nabla \times \boldsymbol{q}_{k+1}\right\|_{K} \\
                                                    & \gtrsim\left\|\boldsymbol{x}_K r_{k-1}\right\|_{K}+\left\|\nabla \times \boldsymbol{q}_{k+1}\right\|_{K}                                           \\
                                                    & \gtrsim\left\|\boldsymbol{s}_{k}\right\|_{K}.
    \end{aligned}
  \end{equation}

  We prove \cref{eq:lsf2} as follows:
  \begin{equation}
    \begin{aligned}
      \normmm{\boldsymbol{v}_{h}}_{\mathrm{Sf},k,K} & \stackrel{\cref{eq:normvf}}{\lesssim} h^{\frac{1}{2}}_{K}\left\|\boldsymbol{v}_{h}\cdot \boldsymbol{n}\right\|_{\partial K}+\left\|\boldsymbol{v}_{h}\right\|_{K}+h_{K}\left\|\nabla \cdot \boldsymbol{v}_{h}\right\|_{K} \\
                                                    & \stackrel{\cref{eq:polyinv2},\cref{eq:vtrace2}}{\lesssim}\left\|\boldsymbol{v}_{h}\right\|_{K}+h_{K}\left\|\nabla \cdot \boldsymbol{v}_{h}\right\|_{K}                                                                    \\
                                                    & \stackrel{\cref{eq:invest2}}{\lesssim}\left\|\boldsymbol{v}_{h}\right\|_{K}.
    \end{aligned}
  \end{equation}

  Finally, we prove \cref{eq:lsf3} as follows:
  \begin{equation}
    \begin{aligned}
      \normmm{\boldsymbol{v}}_{\mathrm{Sf},k,K} & \stackrel{\cref{eq:normvf}}{\lesssim} h^{\frac{1}{2}}_{K}\left\|\boldsymbol{v}\cdot \boldsymbol{n}\right\|_{\partial K}+\|\boldsymbol{v}\|_{K}+h_{K}\left\|\nabla \cdot \boldsymbol{v}\right\|_{K} \\
                                                & \stackrel{\cref{eq:trace1}}{\lesssim}\|\boldsymbol{v}\|_{K}+h^{s}_{K}|\boldsymbol{v}|_{s,K}+h_{K}\left\|\nabla \cdot \boldsymbol{v}\right\|_{K}.
    \end{aligned}
  \end{equation}
\end{proof}

\begin{theorem}
  \label{thm:errorsf}For each \(\boldsymbol{v} \in \mathbf{H}^s(K)\cap \mathbf{H}(\operatorname{div},K), 1/2 <s \leq k+1\), we have
  \begin{equation}
    \left\|\boldsymbol{v}-\bfitgreek{\Pi}^{\mathrm{Sf}}_{k} \boldsymbol{v}\right\|_K \lesssim h_K^s|\boldsymbol{v}|_{s, K}+h_K\left\|\nabla\cdot \boldsymbol{v}\right\|_K .
  \end{equation}

  The second term on the right-hand side can be neglected if \(s \geq 1\).
\end{theorem}

\begin{proof}
  For any \(\boldsymbol{p}_{k}\) in \(\mathbb{P}_{k}^{3}(K),k=\max\left\{0,[s]\right\}\), by applying \cref{lem:sfbound}, and note that \(\normmm{\bfitgreek{\Pi}^{\mathrm{Sf}}_{k} \cdot}_{\mathrm{Sf},k,K}=\normmm{\cdot}_{\mathrm{Sf},k,K}\), we have:
  \begin{equation}
    \begin{aligned}
      \left\|\boldsymbol{v}-\bfitgreek{\Pi}^{\mathrm{Sf}}_{k} \boldsymbol{v}\right\|_K & \lesssim\left\|\boldsymbol{v}-\boldsymbol{p}_{k}\right\|_{K}+\left\|\bfitgreek{\Pi}^{\mathrm{Sf}}_{k}\left(\boldsymbol{p}_{k}-\boldsymbol{v}\right)\right\|_{K}                                               \\
                                                                                       & \lesssim\left\|\boldsymbol{v}-\boldsymbol{p}_{k}\right\|_{K}+\normmm{\bfitgreek{\Pi}^{\mathrm{Sf}}_{k}\left(\boldsymbol{p}_{k}-\boldsymbol{v}\right)}_{\mathrm{Sf},k,K}                                       \\
                                                                                       & =\left\|\boldsymbol{v}-\boldsymbol{p}_{k}\right\|_{K}+\normmm{\boldsymbol{p}_{k}-\boldsymbol{v}}_{\mathrm{Sf},k,K}                                                                                            \\
                                                                                       & \lesssim\left\|\boldsymbol{v}-\boldsymbol{p}_{k}\right\|_{K}+h^{s}_{K}\left|\boldsymbol{v}-\boldsymbol{p}_{k}\right|_{s,K}+h_{K}\left\|\nabla \cdot\left(\boldsymbol{v}-\boldsymbol{p}_{k}\right)\right\|_{K} \\
    \end{aligned}
  \end{equation}

  If \(s \geq 1\), the standard polynomial approximation estimates yield:
  \begin{equation}
    \left\|\boldsymbol{v}-\bfitgreek{\Pi}^{\mathrm{Sf}}_{k} \boldsymbol{v}\right\|_K \lesssim h_K^s|\boldsymbol{v}|_{s, K}.
  \end{equation}

  Instead if \(1/2 < s < 1\), take \(\boldsymbol{p}_{0}=\bfitgreek{\Pi}_{0}\boldsymbol{v}\). By Poincaré inequality, we have:
  \begin{equation}
    \left\|\boldsymbol{v}-\bfitgreek{\Pi}^{\mathrm{Sf}}_{k} \boldsymbol{v}\right\|_K \lesssim h_K^s|\boldsymbol{v}|_{s, K}+h_K\left\|\nabla\cdot \boldsymbol{v}\right\|_K .
  \end{equation}
\end{proof}

\begin{theorem}[Interpolation Properties of Serendipity Face Virtual
    Element Spaces]
  \label{thm:vf}
  Let \(\boldsymbol{I}^{\mathrm{Sf}}_{k}\) be the interpolator to \(\mathbf{V}_{k, l}^{\mathrm{Sf}}(K)\),
  for each \(\boldsymbol{v} \in \mathbf{H}^s(K)\cap\mathbf{H}^r(\operatorname{div},K), 1/2<s\leq k+1,0 \leq r\leq k\), we have:
  \begin{subequations}
    \begin{align}
       & \left\|\boldsymbol{v}-\boldsymbol{I}^{\mathrm{Sf}}_{k} \boldsymbol{v}\right\|_K \lesssim h_K^s|\boldsymbol{v}|_{s, K}+h_K\|\nabla \cdot \boldsymbol{v}\|_K,\label{eq:thmvf1}               \\
       & \left\|\nabla \cdot\left(\boldsymbol{v}-\boldsymbol{I}^{\mathrm{Sf}}_{k} \boldsymbol{v}\right)\right\|_K \lesssim h_K^r \left|\nabla \cdot \boldsymbol{v}\right|_{r, K} .\label{eq:thmvf2}
    \end{align}
  \end{subequations}
  The second term on the right-hand side of \cref{eq:thmvf1} can be
  neglected if \(s \geq 1\).
\end{theorem}

\begin{proof}
  Inequality \cref{eq:thmvf2} follows from \cref{thm:excom} and standard polynomial approximation
  properties.

  Inequality \cref{eq:thmvf1} follows from the same argument as in the proof of \cite[Theorem 4.2]{daveigaInterpolationStabilityEstimates2022}, with \(\mathit{\Pi}^{0,E}_{k}\) replaced by \(\bfitgreek{\Pi}^{\mathrm{Sf}}_{k}\) and \cref{thm:errorsf} applied.
\end{proof}

In the subsequent proofs, we will adopt a more concise exposition for sections analogous to those presented earlier.

\begin{lemma}
  \label{lem:sesubbound}The following bound holds true:
  \begin{equation}
    \begin{aligned}
       & h^{\frac{1}{2}}_{K}\left\|\nabla q_{k+1}\times \boldsymbol{n}\right\|_{\partial K}+\sup_{p_{k_{\mathrm{d}}}\in \mathbb{P}_{k_{\mathrm{d}}}\left(K\right)}\frac{\left(\nabla q_{k+1},\boldsymbol{x}p_{k_{\mathrm{d}}}\right)_{K}}{\left\|\boldsymbol{x}p_{k_{\mathrm{d}}}\right\|_{K}} \\
       & \gtrsim\left\|\nabla q_{k+1}\right\|_{K}.
    \end{aligned}
  \end{equation}
\end{lemma}

\begin{proof}
  Similar to the proof of \cref{lem:sfsubbound}.
\end{proof}

\begin{lemma}
  \label{lem:sebound}For given \(s,r> 1/2\), the following bound holds true:
  \begin{subequations}
    \begin{align}
       & \normmm{\boldsymbol{s}_{k}}_{\mathrm{Se},k,K}\gtrsim\left\|\boldsymbol{s}_{k}\right\|_{K}                                            & \forall \boldsymbol{s}_{k}\in \mathbb{P}^{3}_{k}\left(K\right),\label{eq:lse1}             \\
       & \normmm{\boldsymbol{v}_{h}}_{\mathrm{Se},k,K}\lesssim\left\|\boldsymbol{v}_{h}\right\|_{K}                                           & \forall \boldsymbol{v}_{h}\in \mathbf{V}^{\mathrm{Se}}_{k,l}\left(K\right),\label{eq:lse2} \\
       &
      \begin{aligned}
        \normmm{\boldsymbol{v}}_{\mathrm{Se},k,K}\lesssim & \|\boldsymbol{v}\|_{K}+h^{s}_{K}|\boldsymbol{v}|_{s,K}                                                       \\
                                                          & +h_{K}\left\|\nabla \times \boldsymbol{v}\right\|+h_{K}^{r+1}\left|\nabla \times \boldsymbol{v}\right|_{r,K}
      \end{aligned}
       & \forall \boldsymbol{v}\in \mathbf{H}^{s}\left(K\right)\cap \mathbf{H}^{r}\left(\operatorname{\mathbf{curl}},K\right).\label{eq:lse3}
    \end{align}
  \end{subequations}
\end{lemma}

\begin{proof}
  We first prove \cref{eq:lse1}. Since \(\boldsymbol{s}_{k}\in\mathbb{P}^{3}_{k}(K)\), decompose \(\boldsymbol{s}_{k}=\nabla q_{k+1}+\boldsymbol{x}\times \boldsymbol{r}_{k-1}\) for some \(\boldsymbol{q}_{k+1}\in\mathbb{P}^{3}_{k+1}(K)\) and \(r_{k-1}\in\mathbb{P}_{k-1}(K)\), and supply the supremum in \cref{eq:normve} with the corresponding components, we have:
  \begin{equation}
    \begin{aligned}
      \normmm{\boldsymbol{s}_{k}} & \geq \beta h_{K}\normmm{\nabla \times \boldsymbol{s}_{k}}_{\mathrm{Sf},k-1,K}+\frac{h^{\frac{1}{2}}_{K}\left(\left(\nabla q_{k+1}+\boldsymbol{x}_K \times \boldsymbol{r}_{k-1}\right)\times \boldsymbol{n},\nabla q_{k+1}\times \boldsymbol{n}\right)_{\partial K}}{\left\|\nabla q_{k+1}\times \boldsymbol{n}\right\|_{\partial K}} \\
                                  & \quad+\sup_{p_{k_{\mathrm{d}}}\in \mathbb{P}_{k_{\mathrm{d}}}\left(K\right)}\frac{\left(\nabla q_{k+1},\boldsymbol{x}_K p_{k_{\mathrm{d}}}\right)_{K}}{\left\|\boldsymbol{x}_K p_{k_{\mathrm{d}}}\right\|_{K}}                                                                                                                                    \\
                                  & \geq  \beta h_{K}\normmm{\nabla \times \boldsymbol{s}_{k}}_{\mathrm{Sf},k-1,K}+h^{\frac{1}{2}}_{K}\left(\left\|\nabla q_{k+1}\times \boldsymbol{n}\right\|_{\partial K}-\left\|\boldsymbol{x}_K \times \boldsymbol{r}_{k-1}\times \boldsymbol{n}\right\|_{\partial K}\right)                                                        \\
                                  & \quad+\sup_{p_{k_{\mathrm{d}}}\in \mathbb{P}_{k_{\mathrm{d}}}\left(K\right)}\frac{\left(\nabla q_{k+1},\boldsymbol{x}_K p_{k_{\mathrm{d}}}\right)_{K}}{\left\|\boldsymbol{x}_K p_{k_{\mathrm{d}}}\right\|_{K}}.
    \end{aligned}
  \end{equation}

  We have:
  \begin{equation}
    \begin{aligned}
      h^{\frac{1}{2}}_{K}\left\|\boldsymbol{x}_K \times \boldsymbol{r}_{k-1}\times \boldsymbol{n}\right\|_{\partial K} & \stackrel{\cref{eq:polyinv1}}{\lesssim} \left\|\boldsymbol{x}_K \times \boldsymbol{r}_{k-1}\right\|_{K}                            \\
                                                                                                                     & \stackrel{\cref{eq:iso2}}{\lesssim} h_{K}\left\|\nabla \times\left(\boldsymbol{x}_K \times \boldsymbol{r}_{k-1}\right)\right\|_{K} \\
                                                                                                                     & =h_{K}\left\|\nabla \times \boldsymbol{s}_{k}\right\|_{K}.
    \end{aligned}
  \end{equation}
  This, and \cref{lem:sesubbound} lead to:
  \begin{equation}\label{eq:const2}
    \begin{aligned}
      \normmm{\boldsymbol{s}_{k}}_{\mathrm{Se},k,K} & \stackrel{\cref{eq:lsf1}}{\geq} C_{1}\left(\beta-C_{2}\right)h_{K}\left\|\nabla \times \boldsymbol{s}_{k}\right\|_{K}+C_{3}\left\|\nabla q_{k+1}\right\|_{K} \\
                                                    & \gtrsim\left\|\boldsymbol{x}_K \times \boldsymbol{r}_{k-1}\right\|_{K}+\left\|\nabla q_{k+1}\right\|_{K}                                                        \\
                                                    & \gtrsim\left\|\boldsymbol{s}_{k}\right\|_{K}.
    \end{aligned}
  \end{equation}

  We prove \cref{eq:lse2} as follows:
  \begin{equation}
    \begin{aligned}
      \normmm{\boldsymbol{v}_{h}}_{\mathrm{Se},k,K} & \stackrel{\cref{eq:normve},\cref{eq:lsf2}}{\lesssim} h^{\frac{1}{2}}_{K}\left\|\boldsymbol{v}_{h}\times \boldsymbol{n}\right\|_{\partial K}+\left\|\boldsymbol{v}_{h}\right\|_{K}+h_{K}\left\|\nabla \times \boldsymbol{v}_{h}\right\|_{K} \\
                                                    & \stackrel{\cref{eq:polyinv2},\cref{eq:vtrace3}}{\lesssim}\left\|\boldsymbol{v}_{h}\right\|_{K}+h_{K}\left\|\nabla \times \boldsymbol{v}_{h}\right\|_{K}                                                                                    \\
                                                    & \stackrel{\cref{eq:invest1}}{\lesssim}\left\|\boldsymbol{v}_{h}\right\|_{K}.
    \end{aligned}
  \end{equation}

  Finally, we prove \cref{eq:lse3} as follows:
  \begin{equation}
    \begin{aligned}
      \normmm{\boldsymbol{v}}_{\mathrm{Se},k,K} & \stackrel{\cref{eq:normve}}{\lesssim} h^{\frac{1}{2}}_{K}\left\|\boldsymbol{v}\times \boldsymbol{n}\right\|_{\partial K}+\|\boldsymbol{v}\|_{K}+h_{K}\normmm{\nabla \times \boldsymbol{v}}_{K}                   \\
                                                & \stackrel{\cref{eq:lsf3},\cref{eq:trace1}}{\lesssim}\|\boldsymbol{v}\|_{K}+h^{s}_{K}|\boldsymbol{v}|_{s,K}+h_{K}\left\|\nabla \times \boldsymbol{v}\right\|_{K}+h_{K}^{r+1}|\nabla \times \boldsymbol{v}|_{r,K}.
    \end{aligned}
  \end{equation}
\end{proof}

\begin{theorem}
  \label{thm:errorse}For each \(\boldsymbol{v} \in \mathbf{H}^s(K)\cap \mathbf{H}^r(\operatorname{\mathbf{curl}},K), 1/2 <s \leq k+1, 1/2<r\leq k\), we have:
  \begin{equation}
    \left\|\boldsymbol{v}-\bfitgreek{\Pi}^{\mathrm{Se}}_{k} \boldsymbol{v}\right\|_K \lesssim h_K^s|\boldsymbol{v}|_{s, K}+ h_K^{r+1}|\nabla\times \boldsymbol{v}|_{r, K} + h_K\left\|\nabla\times \boldsymbol{v}\right\|_K .
  \end{equation}
  On the right-hand side, the second term can be neglected if \(s \geq 3/2\), and the third term can be neglected if \(s \geq 1\).
\end{theorem}

\begin{proof}
  Similar to the proof of \cref{thm:errorsf}.
\end{proof}

\begin{theorem}[Interpolation Properties of Serendipity Edge Virtual
    Element Space]
  \label{thm:ve}
  Let \(\boldsymbol{I}^{\mathrm{Se}}_{k}\) be the interpolator to \(\mathbf{V}^{\mathrm{Se}}_{k,l}(K)\),
  for each \(\boldsymbol{v} \in \mathbf{H}^s(K)\cap \mathbf{H}^r(\operatorname{\mathbf{curl}},K)\), \(1/2 <s \leq k+1, 1/2<r\leq k\), we have:
  \begin{subequations}
    \begin{align}
       & \left\|\boldsymbol{v}-\boldsymbol{I}^{\mathrm{Se}}_{k} \boldsymbol{v}\right\|_K \lesssim h_K^s|\boldsymbol{v}|_{s, K}+h_K^{r+1}|\nabla\times \boldsymbol{v}|_{r, K}+h_K\|\nabla\times \boldsymbol{v}\|_K,\label{eq:thmve1} \\
       & \left\|\nabla\times\left(\boldsymbol{v}-\boldsymbol{I}^{\mathrm{Se}}_{k} \boldsymbol{v}\right)\right\|_K \lesssim h_K^r|\nabla\times \boldsymbol{v}|_{r, K} .\label{eq:thmve2}
    \end{align}
  \end{subequations}
  On the right-hand side of \cref{eq:thmve1}, the second term can be neglected if \(s \geq 3/2\), and the third term can be neglected if \(s \geq 1\).
\end{theorem}

\begin{proof}
  Inequality \cref{eq:thmve2} follows from \cref{thm:excom} and \cref{thm:vf}.

  Inequality \cref{eq:thmve1} follows from the same argument as in the proof of \cite[Theorems 4.5 and 4.6]{daveigaInterpolationStabilityEstimates2022}, with \(\mathit{\Pi}^{E}_{\bar{k}}\) replaced by \(\bfitgreek{\Pi}^{\mathrm{Se}}_{k}\) and \cref{thm:errorse} applied.
\end{proof}

\begin{theorem}[Interpolation Properties of Serendipity Node Virtual
    Element Space]
  \label{thm:vn}Let \(\boldsymbol{I}^{\mathrm{Sn}}_{k}\) be the interpolator to \(\mathbf{V}_{k,l}^{\mathrm{Sn}}(K)\),
  for each \(\nabla v \in \mathrm{H}^s(K), 1 / 2<s\leq k\), we have:
  \begin{equation}
    \left\|\nabla \left(v-\boldsymbol{I}^{\mathrm{Sn}}_{k} v\right)\right\|_K \lesssim h_K^s \left|\nabla v\right|_{s, K}.
  \end{equation}
\end{theorem}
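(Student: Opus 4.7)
The plan is to deduce this bound directly from \cref{thmve} via the third commuting identity in \cref{eqcomm}, namely $\nabla \boldsymbol{I}_h^{\mathrm{Sn}} v = \boldsymbol{I}_h^{\mathrm{Se}}(\nabla v)$. That identity yields
\begin{equation*}
  \left\|\nabla(v - \boldsymbol{I}_h^{\mathrm{Sn}} v)\right\|_K = \left\|\nabla v - \boldsymbol{I}_h^{\mathrm{Se}}(\nabla v)\right\|_K,
\end{equation*}
so the node interpolation estimate is reduced to a serendipity-edge interpolation estimate applied to the vector field $\boldsymbol{w} := \nabla v$.

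I would then invoke \cref{thmve}[1] with $\boldsymbol{w}$ in place of $\boldsymbol{v}$. The hypotheses transfer cleanly: since $v \in \mathrm{H}^s(K)$ with $s > 3/2$, one has $\boldsymbol{w} \in \mathbf{H}^{s-1}(K)$ with Sobolev index $s - 1 > 1/2$, matching the regularity condition of \cref{thmve}. Moreover, the inclusion $\mathbb{P}_m \subseteq \mathbf{V}^{\mathrm{Sn}}(K)$ combined with the exact sequence propagates to $\nabla \mathbb{P}_m \subseteq \mathbf{V}^{\mathrm{Se}}(K)$, and the edge space is designed so that the polynomial-inclusion hypothesis of \cref{thmve} holds under the bound $s \leq m+1$.

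The decisive simplification is that $\nabla \times \boldsymbol{w} = \nabla \times \nabla v = 0$. Consequently, both the $|\nabla \times \boldsymbol{v}|_{\widehat{r},K}$ term and the $\|\nabla \times \boldsymbol{v}\|_K$ term on the right-hand side of \cref{thmve}[1] vanish identically, and only the polynomial-approximation contribution $h_K^s |\nabla v|_{s,K}$ survives (in the seminorm convention adopted by the statement), delivering the claimed estimate. Once the commuting diagram has collapsed the problem onto the edge space, there is no substantive obstacle remaining; the only routine point is the index bookkeeping to confirm that the required polynomial inclusion on $\mathbf{V}^{\mathrm{Se}}$ follows from the one on $\mathbf{V}^{\mathrm{Sn}}$ along the exact sequence.
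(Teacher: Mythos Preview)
Your proposal is correct and follows exactly the paper's own argument: the paper's proof is the single sentence ``The assertion simply follows from \(\left[3\right]\) in \cref{eqcomm} and \cref{thmve},'' and you have spelled out precisely that reduction, including the key observation that \(\nabla\times\nabla v=0\) kills the curl terms in \cref{thmve}[1]. Your write-up is in fact more detailed than the paper's, and the only residual wrinkle---the exponent bookkeeping between \(s\) and \(s-1\) when passing from \(v\in\mathrm{H}^s\) to \(\nabla v\)---is a looseness already present in the paper's statement rather than a defect of your argument.
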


\begin{proof}
  The inequality simply follows from \cref{thm:excom} and and \cref{thm:ve}.
\end{proof}

We notice that the \(\mathrm{L}^2\)-polynomial projector is computable, not only of order \(k\), but also of any order \(l\) on the serendipity space, which leads to the following main result.

\begin{theorem}[Existence of Stable Polynomial Projection]
  \label{thm:st}
  Let \(\mathbf{V}^{\mathrm{S}}_{l}\) denote the serendipity face or edge space,
  and \(\mathbf{V}^{\mathrm{S}}_{l}\) is defined by the same DoFs
  \(\left\{\mathcal{F}_{i}\right\}_{i=1}^{N_{\mathrm{S}}}\). Then
  \(\exists l, \text{ s.t. }\left\|\bfitgreek{\Pi}_{l}\boldsymbol{v}\right\|_{K}\approx \|\boldsymbol{v}\|_{K}\ \forall \boldsymbol{v} \in\mathbf{V}^{\mathrm{S}}_{l}\).
\end{theorem}

\begin{proof}
  The detailed proof is deferred to the Supplementary Material.
\end{proof}

Finally, it remains to discuss how large \(l\) should be for each element
\(K\). \Cref{thm:st} guarantees the existence of sufficiently large projection
orders for both the face space and the edge space.
The proof is nonconstructive and the resulting lower bounds are far from
optimal. The
dimension-counting discussion below still provides a practical way to estimate
the required order. Note that in the following discussion, we assume that
\(K\) is convex and does not possess specific symmetries.

Since the polynomial part is invariant under \(\bfitgreek{\Pi}_{l}\), we can focus on the non-polynomial \(\boldsymbol{v}\) with \(\bfitgreek{\Pi}_{k}^{\mathrm{S}}\boldsymbol{v}=0\). For convenience, define \(\pi_{k,d},\rho_{k,d}, \gamma_{k,d}\) as in \cref{eq:dimdef}, \(\eta\) as the number of faces of \(K\), and \(N_{\mathrm{S}}\) the number of DoFs of the serendipity space.

For \(\boldsymbol{v} \in \mathbf{V}^{\mathrm{Sf}}_{k,l}\), by using \cref{eq:equivsf,eq:sf}, we notice that
\[\left(\boldsymbol{v},\boldsymbol{p}_{l}\right)_{K}=\left(\boldsymbol{v}\cdot \boldsymbol{n},q_{l+1}\right)_{\partial K},\]
i.e., only these \(\nabla q_{l+1}\) with nonzero trace contribute, which means, according to \cite[Chapter 8.8]{veigaSerendipityFaceEdge2017}, we ideally have:
\begin{equation}
  \min_l\{\gamma_{l,3}-\pi_{l-\eta+1,3}+3\pi_{k,3}, 3\pi_{l,3}\}\geq N_{\mathrm{S}}.
\end{equation}

For \(\mathbf{V}^{\mathrm{Se}}_{k,l}\), similar arguments yield
\[\left(\boldsymbol{v},\boldsymbol{p}_{l}\right)_{K}=\left(\boldsymbol{v},\boldsymbol{n}\times \boldsymbol{q}_{l+1}\right)_{\partial K},\]
and only those rotational representatives with nonzero tangential trace
contribute. By \cref{eq:iso2}, we may identify the rotational family
\(\nabla\times\mathbb{P}^{3}_{l+1}(K)\) with
\(\boldsymbol{x}_{K}\times\mathbb{P}^{3}_{l}(K)\), whose dimension is
\(\rho_{l,3}\). Inside this family, zero tangential trace actually forces zero
boundary values: if
\(\widetilde{\boldsymbol{q}}_{l}=\boldsymbol{x}_{K}\times\boldsymbol{r}_{l}\)
and \(\boldsymbol{n}_{F}\times\widetilde{\boldsymbol{q}}_{l}=0\) on a face
\(F\), then \(\widetilde{\boldsymbol{q}}_{l}\) is parallel to
\(\boldsymbol{n}_{F}\), while
\(\widetilde{\boldsymbol{q}}_{l}\cdot\boldsymbol{x}_{K}=0\). Since
\(\boldsymbol{x}_{K}\cdot\boldsymbol{n}_{F}\) is a nonzero constant on
\(F\), this implies \(\widetilde{\boldsymbol{q}}_{l}=0\) on \(F\). Hence the
zero-trace subfamily consists precisely of those rotational representatives
vanishing on the whole boundary. Therefore each component is divisible by the
boundary bubble of degree \(\eta\), so this subfamily is a boundary-bubble
multiple of a rotational family of order \(l-\eta\), and therefore has
dimension \(\rho_{l-\eta,3}\). Hence, analogously to the face case, we
ideally have:
\begin{equation}
  \min_l\{\rho_{l,3}-\rho_{l-\eta,3}+3\pi_{k,3}, 3\pi_{l,3}\} \geq N_{\mathrm{S}}.
\end{equation}

\begin{remark}
  From the above discussion, a considerable part of \(\mathbb{P}_l^3\) has no contribution to the projection due to the definition of the reduced DoFs. If only one of the serendipity spaces is used in an application, one may consider an alternative definition to make the full use of \(\mathbb{P}_l^3\). For example, denote \(\boldsymbol{v}=\nabla\times\boldsymbol{u}+\boldsymbol{w}\) and \cref{eq:equivsf} can be defined by \(\left(\boldsymbol{u}-\bfitgreek{\Pi}^{\mathrm{Se}}\boldsymbol{u},\nabla\times\boldsymbol{p}_{k_{\mathrm{r}}}\right)_K=0\). Note that \(\bfitgreek{\Pi}^{\mathrm{Sf}}\boldsymbol{w}\in \boldsymbol{x}_K\mathbb{P}_{k-1}\perp \boldsymbol{x}_K \times \boldsymbol{p}_{k_{\mathrm{r}}} \). However, such definition will trivialize the \(\bfitgreek{\Pi}_l\) on \(\mathbf{V}^{\mathrm{Se}}_{k,l}\), so it is not adopted in the main text.
\end{remark}

\section{Computational Costs}

We report only per-element asymptotic scalings as functions of \(k,l\), since absolute costs depend on implementation details.

\emph{Identification of DoFs and \(l\).} Indeed, no separate test is needed in identifying the DoFs or \(l\), as the small linear systems assembled for the projectors become singular precisely when identifiability fails. The initial order of preserved DoFs \cref{eq:dofvf3,eq:dofve2,eq:dofve4,eq:dofve5} are set following \cite[Chapters 3.4, 7.2, 8.8]{veigaSerendipityFaceEdge2017}. For \(k=0,1\) there are no such internal DoFs. The initial guess for \(l\) is set according to the discussion above. Notably, these initial choices are generally sufficient for tested non-regular meshes.

\emph{Computation of the projectors.} Each element requires (i) Numerical integration of moments and coordinate transforms to form the matrices and (ii) the solution of small dense linear systems. Using the strategy of \cite{antoniettiFastNumericalIntegration2018}, an order-\(2k\) 3D polynomial integration costs \(O((2k)^{3})\) operations. But the solution of \(n\)-by-\(n\) dense linear systems costs \(O(n^{3})\) operations, which in our scenario dominates. Then:
\begin{itemize}
  \item \(\bfitgreek{\Pi}^{\mathrm{S}}_{k}\): system of size \(\dim \mathbb{P}_{k}^3=3\pi_{k,3}\) costs \(O((3\pi_{k,3})^{3})\sim O((3k^3)^{3})\) operations.
  \item \(\bfitgreek{\Pi}_{l}\): system of size \(\dim \mathbb{P}_{l}^3=3\pi_{l,3}\) (3 uncoupled blocks after splitting by monomial parity) costs \(O(\pi_{l,3}^{3})\sim O(l^{9})\) operations.
\end{itemize}

\emph{Comparison.} Versus the stabilized serendipity VEM\cite{veigaSerendipityFaceEdge2017}, the only extra work is replacing \(\bfitgreek{\Pi}_{k}\) by \(\bfitgreek{\Pi}_{l}\). But our advantages are:
\begin{itemize}
  \item Less DoFs in solving the global system.
  \item If most elements derive from a small set of prototype shapes forming a tiling (repeated cell geometries), the local projection matrices can be precomputed once per prototype and reused. This reduces the per-mesh cost from \(O(N_{\text{cells}}\, n^{3})\) to \(O(N_{\text{shapes}}\, n^{3}+N_{\text{cells}}\, mn)\), where \(n=3\pi_{k,3}\) or \(3\pi_{l,3}\), and \(m\) is the number of DoFs per element.
\end{itemize}

In \cref{tab:l-by-k-polygon} we list sufficient \(l\) for several \(k\) on non-regular convex polygons, but note that we only need them for 2D applications. For polyhedra, it is inconvenient to list all possible configurations, so we instead report the numerical statistics in \cref{sec:exp-cost}. It shows that the required \(l\) is often close to \(k\), especially for small \(k\).

\begin{table}[t]
  \centering
  \caption{Sufficient \(l\) for \(\mathbf{V}^{\mathrm{Sf}}_{k,l}\) (or \(\mathbf{V}^{\mathrm{Se}}_{k,l}\)) on non-regular convex polygons with \(m\) edges.}
  \label{tab:l-by-k-polygon}
  \begin{tabular}{c|c|ccccccc}
    \hline
    \multicolumn{2}{c|}{} & \multicolumn{7}{c}{\(m\)} \\
    \cline{3-9}
    \multicolumn{2}{c|}{}           & 3 & 4 & 5 & 6 & 7 & 8 & 9 \\
    \hline
    \multirow{4}{*}{\(k\)} & 0 & 1 & 1 & 1 & 1 & 2 & 2 & 2 \\
                             & 1 & 1 & 2 & 2 & 2 & 3 & 3 & 3 \\
                             & 2 & 2 & 3 & 3 & 3 & 4 & 4 & 4 \\
                             & 3 & 3 & 4 & 4 & 4 & 5 & 5 & 5 \\
    \hline
  \end{tabular}
\end{table}

\section{Model Problem}

In this section, we approximate the Maxwell eigenvalue problem using the proposed stabilization-free virtual element method. The problem is chosen for two reasons:
\begin{itemize}
  \item It offers solutions with known eigenvalues and eigenfunctions of different regularities, which allows us to precisly evaluate the convergence of the proposed method.
  \item The stabilization-free technique for both the \(\mathbf{H}(\operatorname{\mathbf{curl}})\) and \(\mathbf{H}(\operatorname{div})\)-conforming spaces are relevant, so that it simultaneously demonstrates the applicability of the proposed method to both spaces.
\end{itemize}

The problem is: find eigenvalues \(\lambda\in\mathbb{R}\) and eigenfunctions \(\boldsymbol{u}\neq 0\), satisfying:
\begin{equation}
  \begin{aligned}
    \nabla \times \nabla \times \boldsymbol{u} & = \lambda \boldsymbol{u} & \text{in } \Omega,          \\
    \nabla \cdot \boldsymbol{u}                & = 0                      & \text{in } \Omega,          \\
    \boldsymbol{u} \times \boldsymbol{n}       & = 0                      & \text{on } \partial \Omega,
  \end{aligned}
\end{equation}
where \(\Omega\) is the domain, \(\lambda=\left(\omega/c\right)^2\), \(\omega\) is the angular frequency, and \(c=1\) is the speed of light.

The variational form is: find eigenvalues \(\lambda \in \mathbb{R}\) and eigenfunctions \(\boldsymbol{u}\neq 0 \in \mathbf{H}_{0,\operatorname{div}0}\left(\operatorname{\mathbf{curl}},\Omega\right)\), satisfying:
\begin{equation}
    a(\boldsymbol{u},\boldsymbol{v}):=\left(\nabla \times\boldsymbol{u},\nabla \times \boldsymbol{v}\right)=\lambda\left(\boldsymbol{u},\boldsymbol{v}\right)  \quad \forall \boldsymbol{v}\in \mathbf{H}_{0,\operatorname{div}0}\left(\operatorname{\mathbf{curl}},\Omega\right).
  \label{eq:continuous}
\end{equation}

Before discretization, we define the global conforming discrete spaces:
\begin{equation}
  \begin{aligned}
     & \mathbf{V}^{\mathrm{Sf}}_{h}\left(\Omega\right):=\left\{\boldsymbol{v}_{h}\in \mathbf{H}\left(\operatorname{div},\Omega\right)\mid \boldsymbol{v}_{h}\in \mathbf{V}^{\mathrm{Sf}}_{k-1,l_K-1}\left(K\right)\ \forall K\in \mathcal{T}_{h}\right\},       \\
     & \mathbf{V}^{\mathrm{Se}}_{h}\left(\Omega\right):=\left\{\boldsymbol{v}_{h}\in \mathbf{H}\left(\operatorname{\mathbf{curl}},\Omega\right)\mid \boldsymbol{v}_{h}\in \mathbf{V}^{\mathrm{Se}}_{k,l_K}\left(K\right)\ \forall K\in \mathcal{T}_{h}\right\}, \\
     & \mathbf{V}^{\mathrm{Sn}}_{h}\left(\Omega\right):=\left\{v_{h}\in \mathrm{H}^1\left(\Omega\right)\mid v_{h}\in \mathbf{V}^{\mathrm{Sn}}_{k+1,l_K+1}\left(K\right)\ \forall K\in \mathcal{T}_{h}\right\}.
  \end{aligned}
\end{equation}

We further denote the divergence-free and conforming subspaces by:
\begin{equation}
  \begin{aligned}
     & \mathbf{V}^{\mathrm{Sf}}_{\operatorname{div}0,h}\left(\Omega\right):=\left\{\boldsymbol{v}_{h}\in \mathbf{V}^{\mathrm{Sf}}_{h}\left(\Omega\right)\mid \nabla\cdot \boldsymbol{v}_{h}=0\right\}, \\
     & \mathbf{V}^{\mathrm{Se}}_{0,\operatorname{div}0,h}\left(\Omega\right):=\mathbf{V}^{\mathrm{Se}}_{h}\left(\Omega\right)\cap \mathbf{H}_{0,\operatorname{div}0}\left(\operatorname{\mathbf{curl}},\Omega\right).
  \end{aligned}
\end{equation}

With an order \(l_K\) chosen so that \(\bfitgreek{\Pi}_{l_K}\) is injective on the
local serendipity space for each element \(K\), we can define the stable inner
product as:
\begin{equation}
  \begin{aligned}
     & \left[\boldsymbol{u}_{h},\boldsymbol{v}_{h}\right]_{\mathrm{S},K}:= \left(\bfitgreek{\Pi}_{l_K}\boldsymbol{u}_{h},\bfitgreek{\Pi}_{l_K}\boldsymbol{v}_{h}\right)_{K} & \forall\boldsymbol{u}_{h},\boldsymbol{v}_{h}\in \mathbf{V}^{\mathrm{Se}}_{k,l_K}\left(K\right),                                        \\
     & \left[\boldsymbol{u}_{h},\boldsymbol{v}_{h}\right]_{\mathrm{S}}:=\sum_{K\in \mathcal{T}_{h}}\left[\boldsymbol{u}_{h},\boldsymbol{v}_{h}\right]_{\mathrm{S},K}        & \forall\boldsymbol{u}_{h},\boldsymbol{v}_{h}\in \mathbf{V}_{h}^{\mathrm{Se}}(\Omega) \text{ or } \mathbf{V}_{h}^{\mathrm{Sf}}(\Omega).
  \end{aligned}
\end{equation}

When applied to \(\mathbf{V}^{\mathrm{Se}}_{h}(\Omega)\) and
\(\mathbf{V}^{\mathrm{Sf}}_{h}(\Omega)\), we write the corresponding stable inner
products as \([\cdot,\cdot]_{\mathrm{Se}}\) and \([\cdot,\cdot]_{\mathrm{Sf}}\),
respectively.

For simplicity, we denote the associated global projector to the broken polynomial space as \(\bfitgreek{\Pi}_{h}\).

Define the equivalent discrete norm for \(\boldsymbol{v}_{h}\in \mathbf{V}_{h}^{\mathrm{Se}}\left(\Omega\right) \text{ or } \mathbf{V}_{h}^{\mathrm{Sf}}\left(\Omega\right)\) as:
\begin{equation}
  \begin{aligned}
     & \normmm{\boldsymbol{v}_{h}}^2_{\mathrm{S},K}:=\left[\boldsymbol{v}_{h},\boldsymbol{v}_{h}\right]_{\mathrm{S},K},     \\
     & \normmm{\boldsymbol{v}_{h}}^2_{\mathrm{S}}:=\sum_{K\in \mathcal{T}_{h}}\normmm{\boldsymbol{v}_{h}}^2_{\mathrm{S},K}.
  \end{aligned}
\end{equation}

Finally, we approximate the variational form with the stabilization-free VEM. Assuming the following discrete Helmholtz decomposition, for \(\boldsymbol{v}_h\in \mathbf{V}^{\mathrm{Se}}_{h}\left(\Omega\right)\),
\begin{equation}
  \label{eq:orthdecomp}
  \begin{aligned}
& \boldsymbol{v}_h=\bfitgreek{\psi}+\nabla \varphi, \\
& \bfitgreek{\psi} \in \mathbf{H}^1(\Omega),\left\{\begin{array}{l}
\nabla \times \bfitgreek{\psi} \in \mathbf{V}^{\mathrm{Sf}}_{\operatorname{div}0,h}\left(\Omega\right) \\
\nabla \cdot \bfitgreek{\psi}=0\\
\bfitgreek{\psi} \times \boldsymbol{n}=0
\end{array}\right., \\
& \varphi \in \mathbf{V}^{\mathrm{Sn}}_{h}\left(\Omega\right).
\end{aligned}
\end{equation}

So we can just solve: find eigenvalues \(\lambda_{h} \in \mathbb{R}\) and eigenfunctions \(\boldsymbol{u}_{h}\neq 0 \in \mathbf{V}^{\mathrm{Se}}_{0,\operatorname{div}0, h}\left(\Omega\right)\), satisfying:
\begin{equation}
    a_h(\boldsymbol{u}_{h}, \boldsymbol{v}_{h}):=\left[\nabla \times\boldsymbol{u}_{h},\nabla \times\boldsymbol{v}_{h}\right]_{\mathrm{Sf}}=\lambda_{h}\left[\boldsymbol{u}_{h},\boldsymbol{v}_{h}\right]_{\mathrm{Se}}  \quad \forall \boldsymbol{v}_{h}\in \mathbf{V}^{\mathrm{Se}}_{0,\operatorname{div}0, h}\left(\Omega\right).
  \label{eq:discrete}
\end{equation}

For simplicity, we briefly conduct the error analysis from the discrete formulation following the standard procedure of analyzing the mixed finite element method (c.f. \cite{boffiMixedFiniteElement2013}) and eigenvalue problem (c.f. \cite{babuskaEigenvalueProblems1991}).

\begin{lemma}[Coercivity]
  \label{lem:coercivity}
  \begin{equation}
    \normmm{\nabla \times \boldsymbol{v}_h}_{\mathrm{Sf}} \gtrsim \left\|\boldsymbol{v}_h\right\| \quad \forall \boldsymbol{v}_h \in \mathbf{V}^{\mathrm{Se}}_{0,\operatorname{div}0, h}\left(\Omega\right).
  \end{equation}
\end{lemma}

\begin{proof}
  By definition,
  \(\boldsymbol{v}_{h}\in \mathbf{V}^{\mathrm{Se}}_{0,\operatorname{div}0, h}\left(\Omega\right)
  \subseteq \mathbf{H}_{0,\operatorname{div}0}\left(\operatorname{\mathbf{curl}},\Omega\right)\).
  Therefore the Maxwell estimate (see, e.g., \cite[Proposition 3.7]{amroucheVectorPotentialsThreedimensional1998}) yields
  \begin{equation}
    \left\|\boldsymbol{v}_{h}\right\|\lesssim\left\|\nabla \times \boldsymbol{v}_{h}\right\|.
  \end{equation}

  On the other hand, by the exact sequence structure of the local spaces,
  \(\nabla \times \boldsymbol{v}_{h}\in \mathbf{V}^{\mathrm{Sf}}_{\operatorname{div}0,h}\left(\Omega\right)\subseteq \mathbf{V}^{\mathrm{Sf}}_{h}\left(\Omega\right)\).
  Since \(l_K\) is chosen so that \(\bfitgreek{\Pi}_{l_K}\) is injective on each local
  face serendipity space, which is guaranteed for sufficiently large \(l_K\) by
  \cref{thm:st}, finite dimensional norm equivalence gives the local norm equivalence
  \begin{equation}
    \left\|\nabla \times \boldsymbol{v}_{h}\right\|_{K}\approx \normmm{\nabla \times \boldsymbol{v}_{h}}_{\mathrm{S},K}.
  \end{equation}
  Summing over all elements gives
  \begin{equation}
    \left\|\nabla \times \boldsymbol{v}_{h}\right\|\approx \normmm{\nabla \times \boldsymbol{v}_{h}}_{\mathrm{Sf}},
  \end{equation}
  and the claim follows.
\end{proof}

Now define the solution operator \(\boldsymbol{T},\boldsymbol{T}_{h}\), such that:
\begin{equation}
  \begin{aligned}
    a\left(\boldsymbol{T}\boldsymbol{u},\boldsymbol{v}\right)             & =\left(\boldsymbol{u},\boldsymbol{v}\right)                  & \forall \boldsymbol{u}, \boldsymbol{v}\in \mathbf{H}_{0,\operatorname{div}0}\left(\operatorname{\mathbf{curl}},\Omega\right),                                                         \\
    a_{h}\left(\boldsymbol{T}_{h}\boldsymbol{u},\boldsymbol{v}_{h}\right) & =\left[\boldsymbol{u},\boldsymbol{v}_{h}\right]_{\mathrm{Se}} & \forall \boldsymbol{u}\in \mathbf{H}_{0,\operatorname{div}0}\left(\operatorname{\mathbf{curl}},\Omega\right),\boldsymbol{v}_{h}\in \mathbf{V}^{\mathrm{Se}}_{0,\operatorname{div}0, h}\left(\Omega\right).
  \end{aligned}
\end{equation}

\begin{lemma}
  \label{lem:Terror}
  Assume that \(\boldsymbol{u}\in \mathbf{H}^{s}\left(\Omega\right)\cap \mathbf{H}^{r}\left(\operatorname{\mathbf{curl}},\Omega\right), s,r > 1/2\) and is a solution to \cref{eq:continuous}, we have:
  \begin{equation}
    \left\|\boldsymbol{T}\boldsymbol{u}-\boldsymbol{T}_{h}\boldsymbol{u}\right\|\lesssim h^{s}|\boldsymbol{u}|_{s}+h^{1+r}|\nabla \times\boldsymbol{u}|_{r}.
  \end{equation}
\end{lemma}

\begin{proof}
  Take \(\boldsymbol{v}_{h}\in \mathbf{V}^{\mathrm{Se}}_{0,\operatorname{div}0, h}\left(\Omega\right)\), by using \cref{eq:continuous,eq:discrete}, we have:
  \begin{equation}
    \begin{aligned}
      a_{h}\left(\boldsymbol{I}_{h}\boldsymbol{T}\boldsymbol{u}-\boldsymbol{T}_{h}\boldsymbol{u},\boldsymbol{v}_{h}\right) & =a_{h}\left(\boldsymbol{I}_{h}\boldsymbol{T}\boldsymbol{u},\boldsymbol{v}_{h}\right)-\left[\boldsymbol{u},\boldsymbol{v}_{h}\right]_{\mathrm{Se}}                                                                \\
                                                                                                                           & =\left[a_{h}-a\right]\left(\boldsymbol{I}_{h}\boldsymbol{T}\boldsymbol{u},\boldsymbol{v}_{h}\right)+a\left(\boldsymbol{I}_{h}\boldsymbol{T}\boldsymbol{u}-\boldsymbol{T}\boldsymbol{u},\boldsymbol{v}_{h}\right) \\
                                                                                                                           & \quad +\left(\boldsymbol{u},\boldsymbol{v}_{h}\right)-\left[\boldsymbol{u},\boldsymbol{v}_{h}\right]_{\mathrm{Se}}.
    \end{aligned}
  \end{equation}

  Then using \cref{lem:coercivity}, we have:
  \begin{equation}
    \begin{aligned}
      \left\|\boldsymbol{I}_{h}\boldsymbol{T}\boldsymbol{u}-\boldsymbol{T}_{h}\boldsymbol{u}\right\| & \lesssim \sup_{\left\|\boldsymbol{v}_{h}\right\|=1}a_{h}\left(\boldsymbol{I}_{h}\boldsymbol{T}\boldsymbol{u}-\boldsymbol{T}_{h}\boldsymbol{u},\boldsymbol{v}_{h}\right)                                                                                                                                                      \\
                                                                                                     & \lesssim \left(\sum_{K\in \mathcal{T}_{h}}\left(\left\|\left(\bfitgreek{\Pi}_{h}-\boldsymbol{E}\right)\nabla \times\boldsymbol{I}_{h}\boldsymbol{T}\boldsymbol{u}\right\|^{2}_{K}+\left\|\nabla \times\left(\boldsymbol{I}_{h}\boldsymbol{T}\boldsymbol{u}-\boldsymbol{T}\boldsymbol{u}\right)\right\|^{2}_{K}\right.\right. \\
                                                                                                     & \quad\left.\left.+\left\|\left(\bfitgreek{\Pi}_{h}-\boldsymbol{E}\right)\boldsymbol{u}\right\|^{2}_{K}\right)\right)^{\frac{1}{2}}                                                                                                                                                                                           \\
                                                                                                     & \lesssim \left(\sum_{K\in \mathcal{T}_{h}}\left(\left\|\left(\bfitgreek{\Pi}_{h}-\boldsymbol{E}\right)\nabla \times\boldsymbol{T}\boldsymbol{u}\right\|^{2}_{K}+\left\|\nabla \times\left(\boldsymbol{I}_{h}\boldsymbol{T}\boldsymbol{u}-\boldsymbol{T}\boldsymbol{u}\right)\right\|^{2}_{K}\right.\right.                   \\
                                                                                                     & \quad\left.\left.+\left\|\left(\bfitgreek{\Pi}_{h}-\boldsymbol{E}\right)\boldsymbol{u}\right\|^{2}_{K}\right)\right)^{\frac{1}{2}}                                                                                                                                                                                           \\
                                                                                                     & \lesssim h^{s}|\boldsymbol{u}|_{s}+h^{1+r}|\nabla \times\boldsymbol{T}\boldsymbol{u}|_{r}                                                                                                                                                                                                                                    \\
                                                                                                     & \lesssim h^{s}|\boldsymbol{u}|_{s}+h^{1+r}|\nabla \times\boldsymbol{u}|_{r},
    \end{aligned}
  \end{equation}
  where \cref{thm:errorse}, boundedness of \(\boldsymbol{I}_{h}\) and invariance of \(\boldsymbol{T}\) are used.

  Finally by a triangle inequality and \cref{thm:ve}, the claim follows.
\end{proof}

\begin{theorem}[A Priori Error Estimates]
  \label{thm:error}
  Let \(\lambda_{h}, \boldsymbol{u}_{h}\) be the solution to \cref{eq:discrete}, and \(\lambda, \boldsymbol{u}\) be the solution to \cref{eq:continuous}. Then, there exists some \(1/2<s\) and \(\nabla \times \mathbb{P}_{k+1}^3\left(K\right)\subseteq \mathbf{V}^{\mathrm{Se}}_{\operatorname{div}0}\left(K\right) \ \forall K\in \mathcal{T}_{h}\), such that \(\boldsymbol{u}\in \mathbf{H}^s\left(\Omega\right)\), and:
  \begin{subequations}
    \begin{align}
      \left\|\boldsymbol{u}-\boldsymbol{u}_{h}\right\| & \lesssim h^{r},\label{eq:thmerror1}  \\
      \left|\lambda-\lambda_{h}\right|                 & \lesssim h^{2r},\label{eq:thmerror2}
    \end{align}
  \end{subequations}
  where \(r=\min \left\{k+1,s\right\}\).
\end{theorem}

\begin{proof}
  Since \(\boldsymbol{u}\in \mathbf{H}_{0}\left(\operatorname{\mathbf{curl}}, \Omega\right)\cap\mathbf{H}\left(\operatorname{div},\Omega\right)\), and \(\nabla \times \boldsymbol{u}\in \mathbf{H}\left(\operatorname{\mathbf{curl}}, \Omega\right)\cap\mathbf{H}_{0}\left(\operatorname{div},\Omega\right)\), by \cite[Proposition 3.7]{amroucheVectorPotentialsThreedimensional1998}, \(\boldsymbol{u}, \nabla \times \boldsymbol{u}\in \mathbf{H}^s\left(\Omega\right)\) for some \(s>1/2\). Then we can apply the standard analysis as in \cite[Section 6]{babuskaEigenvalueProblems1991}, and assertion \cref{eq:thmerror2} follows from \cite[Theorems 8.3 and 8.4]{babuskaEigenvalueProblems1991}, providing \cref{lem:Terror}.
  Since \cref{lem:Terror} gives the operator convergence estimate, the standard
  spectral approximation theory for compact self-adjoint operators yields both
  \cref{eq:thmerror1} and \cref{eq:thmerror2}; see \cite[Section 6]{babuskaEigenvalueProblems1991}.
\end{proof}

\section{Numerical Experiments}
In this section, we present the numerical experiments to verify the theoretical results. For the sake of convenience, we use SFVEM, and FEM as the abbreviations of the stabilization-free virtual element method proposed in this work, and the conventional finite element method, respectively. The SFVEM is implemented in Python with Numba\cite{lam2015numba} and JaxFEM\cite{Xue2023JaxFEM}, while it is also conducted with GPU acceleration (Nvidia® Tesla® A800). While in comparison, the conventional FEM (including Tetrahedral mesh generation) is conducted with COMSOL Multiphysics® 6.3\cite{multiphysics1998introduction} with CPU (Intel® Xeon® 6242R). The Voronoi polyhedral mesh for SFVEM is generated with ANSYS® Fluent® 2025 R1. The numerical results obtained with commercial FEM software serve as reference.

With cylindrical coordinates \(\left[\hat{\mathbf{e}}_{\rho},\hat{\mathbf{e}}_{\phi},\hat{\mathbf{e}}_{z}\right]\) adopted (and hereafter in this section), an sectorial cylindrical domain \(\Omega = \left[0,1\right] \times \left[0,3\pi/2\right] \times \left[0,1\right]\) is set, so that analytical solutions with infinite regularity and singularities can be easily derived. The mesh is partitioned into Voronoi polyhedra for SFVEM, and tetrahedra for FEM, as shown in \cref{fig:mesh}.

\begin{figure}[htbp]            
  \centering                   
  \begin{minipage}{0.33\textwidth}
    \centering
    \includegraphics[width=\textwidth]{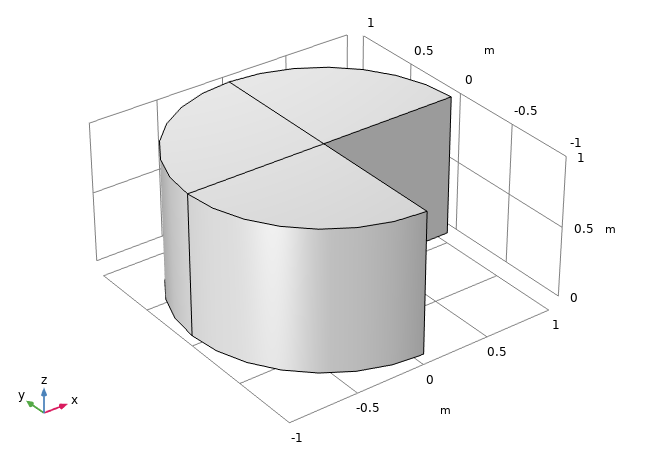} 
    \subcaption{}  
  \end{minipage}\hfill           
  \begin{minipage}{0.33\textwidth}
    \centering
    \includegraphics[width=\textwidth]{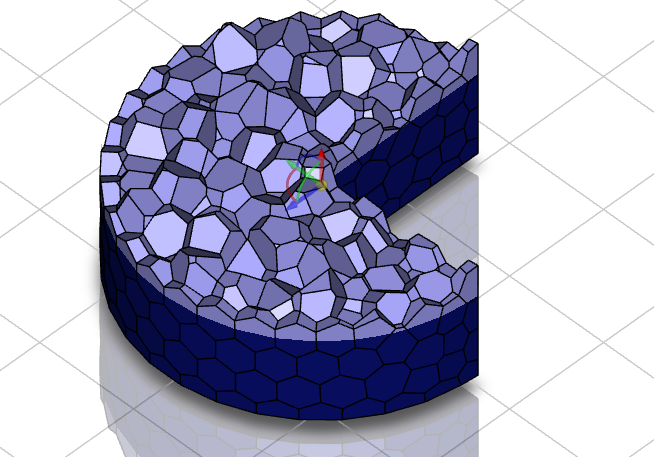} 
    \subcaption{}  
  \end{minipage}\hfill
  \begin{minipage}{0.33\textwidth}
    \centering
    \includegraphics[width=\textwidth]{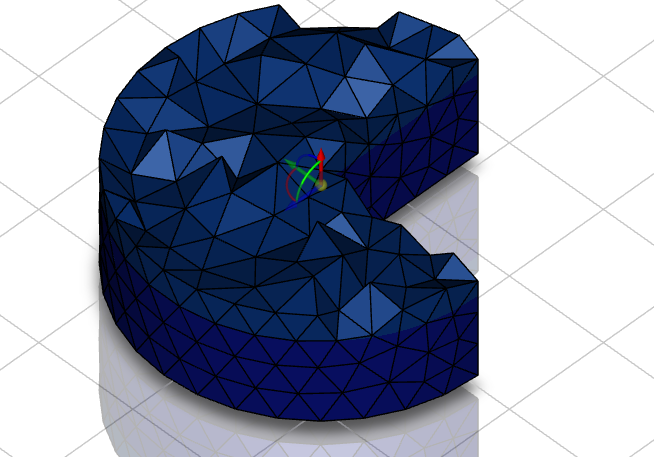} 
    \subcaption{}  
  \end{minipage}
  \caption{Geometry and Mesh. (a): Domain overview. (b): Voronoi polyhedral mesh. (c): Tetrahedral mesh.} 
  \label{fig:mesh}
\end{figure}

\subsection{Analytical Solutions and Regularity} For simplicity, constant coefficients are omitted in the following derivations. Considering only TE modes, the Borgnis potential\cite{broglieProblemesPropagationsGuidees1941} satisfying the boundary condition is:
\begin{equation}
  V= J_{\nu}\left(T\rho\right)\cos\left(\nu\phi\right)\sin\left(\beta z\right),
\end{equation}
where \(J_{\nu}\) is the Bessel function of the first kind, \(T\) is the radial wavenumber, \(\nu\) is the azimuthal wavenumber, and \(\beta\) is the axial wavenumber.

The eigenvalues are given by:
\begin{equation}
  \lambda=T^2+\beta^2,
\end{equation}
and the eigenfunctions are given by:
\begin{equation}
  \boldsymbol{u}=\left[
    \begin{matrix}
      -\frac{1}{\rho}\frac{\partial V}{\partial \phi} \\
      \frac{\partial V}{\partial \rho}                \\
      0
    \end{matrix}
    \right].
\end{equation}
Possible singularity occurs at \(\rho=0\), where we have the following asymptotic form:
\begin{equation}
  J_{\nu}\left(T\rho\right) \sim \frac{1}{\Gamma\left(\nu+1\right)}\left(\frac{T\rho}{2}\right)^{\nu} \text{ as } T\rho\rightarrow 0^+.
  \label{eq:asymp}
\end{equation}

It can be seen that \(J_{\nu}\left(T\rho\right)\) dominates and the integrals of \(\cos\left(\nu\phi\right)\sin\left(\beta z\right)\) (and their derivatives) are bounded by constants near \(\rho=0\). Then let \(\epsilon>0\) be an arbitrarily small value, \(0<s<1\), and consider the \(s\)-order seminorm\cite{dinezzaHitchhikersGuideFractional2012} of the eigenfunction in \(\Omega_{\epsilon}:= \left[0,\epsilon\right]\times\left[0,3\pi/2\right]\times\left[0,1\right]\):
\begin{equation}
  \begin{aligned}
    \left|\boldsymbol{u}\right|_{s,\Omega_{\epsilon}}^2 & =\int_{\Omega_{\epsilon}}\int_{\Omega_{\epsilon}}\frac{\left|\boldsymbol{u}_{\rho}\left(\boldsymbol{x}\right)-\boldsymbol{u}_{\rho}\left(\boldsymbol{y}\right)\right|^2+\left|\boldsymbol{u}_{\phi}\left(\boldsymbol{x}\right)-\boldsymbol{u}_{\phi}\left(\boldsymbol{y}\right)\right|^2}{\left|\boldsymbol{x}-\boldsymbol{y}\right|^{3+2s}}\mathrm{~d}\boldsymbol{x}\mathrm{d}\boldsymbol{y} \\
                                                        & \stackrel{\cref{eq:asymp}}{\approx} \int_{\Omega_{\epsilon}}\int_{\Omega_{\epsilon}}\frac{\left(\rho_{\boldsymbol{x}}^{\nu-1}-\rho_{\boldsymbol{y}}^{\nu-1}\right)^2}{\left|\boldsymbol{x}-\boldsymbol{y}\right|^{3+2s}}\mathrm{~d}\boldsymbol{x}\mathrm{d}\boldsymbol{y}                                                                                                                     \\
                                                        & \stackrel{\text{(Scaling)}}{\approx}\int_{0}^{\epsilon}\int_{0}^{\epsilon}\frac{\left(\rho_{\boldsymbol{x}}^{\nu-1}-\rho_{\boldsymbol{y}}^{\nu-1}\right)^2}{\left|\rho_{\boldsymbol{x}}-\rho_{\boldsymbol{y}}\right|^{2+2s}}\rho_{\boldsymbol{x}}\rho_{\boldsymbol{y}}\mathrm{~d}\rho_{\boldsymbol{x}}\mathrm{d}\rho_{\boldsymbol{y}}                                                         \\
                                                        & \approx\int_{0}^{\epsilon}\rho^{2\nu-1-2s}\mathrm{~d}\rho.
  \end{aligned}
\end{equation}
The integral converges if \(2\nu-1-2s>-1\). Indeed, for fractional \(\nu\), we have \(s<\nu\), and for integer \(\nu\), we have \(s=\infty\).

Now we investigate two solutions. The first one \(\boldsymbol{u}_{1}\) is infinitely regular, with \(\nu_{1}=4,\lambda_{1}\approx3.86500\pi^2\), and the second one \(\boldsymbol{u}_2\) is singular, with \(\nu_{2}=2/3,\lambda_{2}\approx3.38395\pi^2\). See \cref{fig:eigfunc} for the visualization of the two eigenfunctions, where the correspondence between the wavenumber and periodicity in each direction is clear, and in \cref{fig:eigfunc2}, the field diverges at \(\rho=0\) as expected.

\begin{figure}[htbp]            
  \centering                   
  \begin{minipage}{0.5\textwidth}
    \centering
    \includegraphics[width=\textwidth]{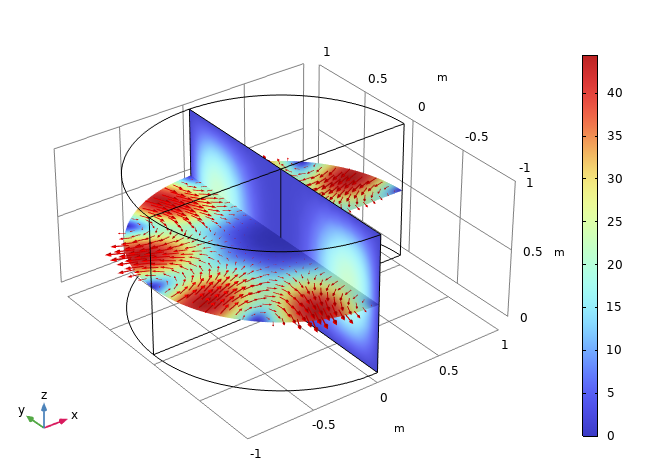} 
    \subcaption{}  
  \end{minipage}\hfill           
  \begin{minipage}{0.5\textwidth}
    \centering
    \includegraphics[width=\textwidth]{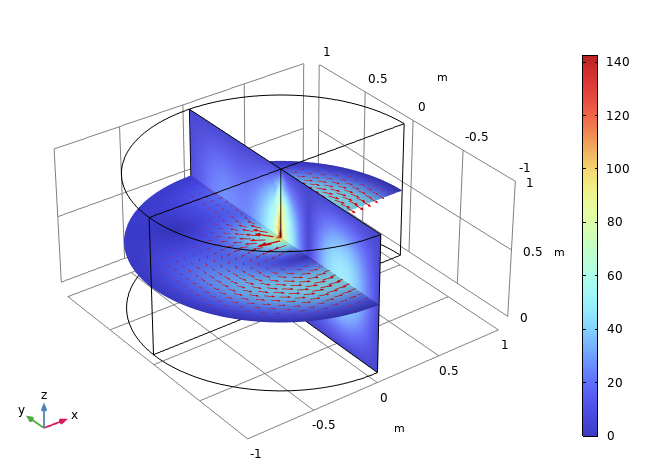} 
    \subcaption{}  
    \label{fig:eigfunc2}
  \end{minipage}
  \caption{Field Visualization. (a): \(\boldsymbol{u}_{1}\). (b): \(\boldsymbol{u}_{2}\). The direction of arrows represents the direction, and the length of arrows with background color represents the magnitude.}
  \label{fig:eigfunc}
\end{figure}

\subsection{Implementation and Assembling Costs}\label{sec:exp-cost} Since \(\mathbf{V}^{\mathrm{Se}}_{\operatorname{div}0}\) is essentially \(\mathbf{V}^{\mathrm{Se}}\) with \(k_{\mathrm{d}}=-1\), so the corresponding subspace to keep is \(\nabla \times \mathbb{P}_{k+1}^3\).

In \cref{fig:histo}, we show the histogram of polytopal shapes and corresponding average \(l\) for cells, in the mesh with \(h=0.2\), to give the reader an impression of how large \(l\) should be in practice. It can be seen that most cells are with 9 to 16 faces, and most faces are with 4 to 6 edges. And \(l\) is just moderately larger than \(k\) in most cells.

In \cref{fig:asstime}, we show the assembling time comparison between SFVEM and stabilized serendipity VEM. It can be seen that the assembling time of SFVEM is only moderately larger than that of FEM, which verifies the efficiency of the proposed method.

\begin{figure}[htbp]            
  \centering                   
  \begin{minipage}{0.5\textwidth}
    \centering
    \includegraphics[width=\textwidth]{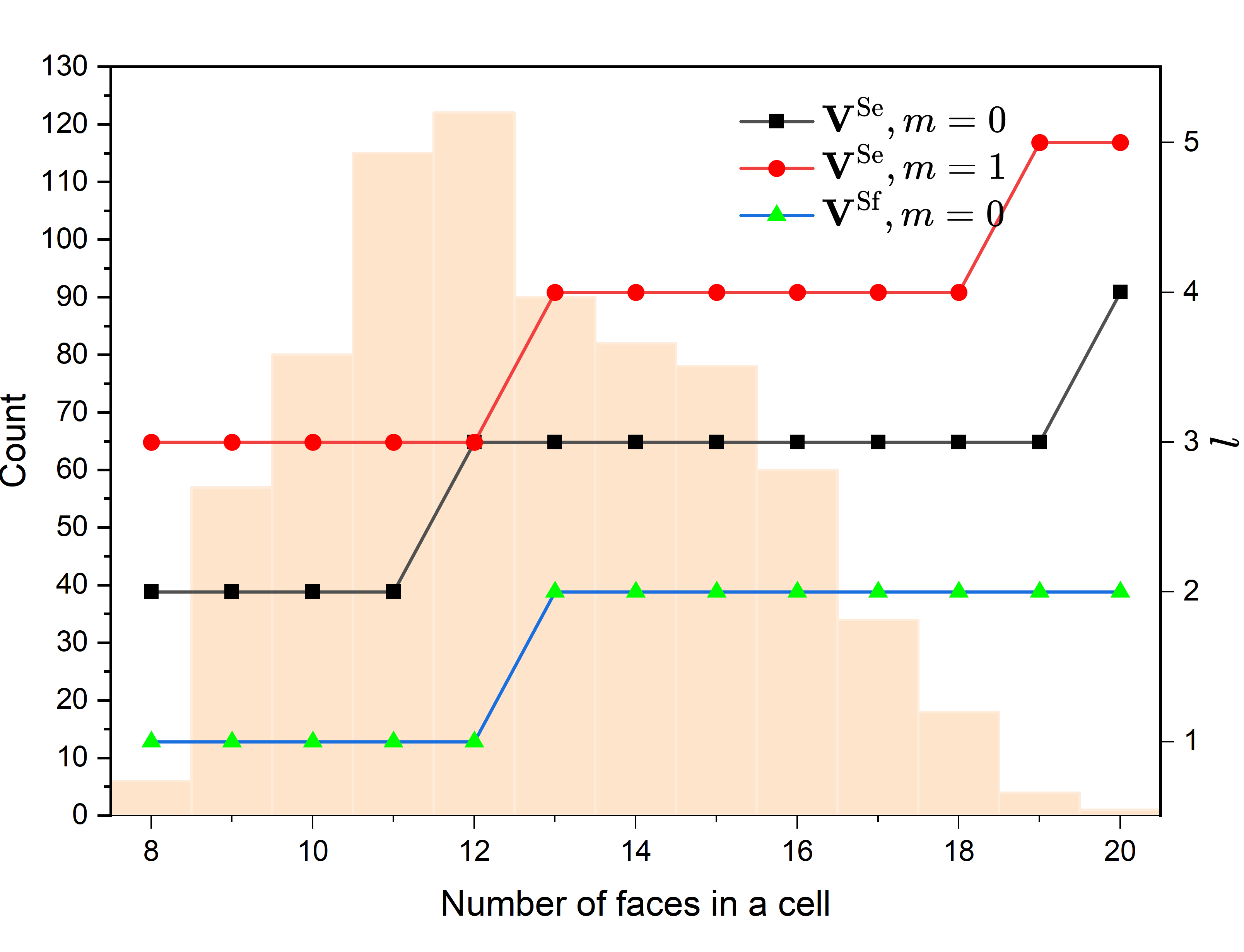} 
    \subcaption{}  
  \end{minipage}\hfill           
  \begin{minipage}{0.5\textwidth}
    \centering
    \includegraphics[width=\textwidth]{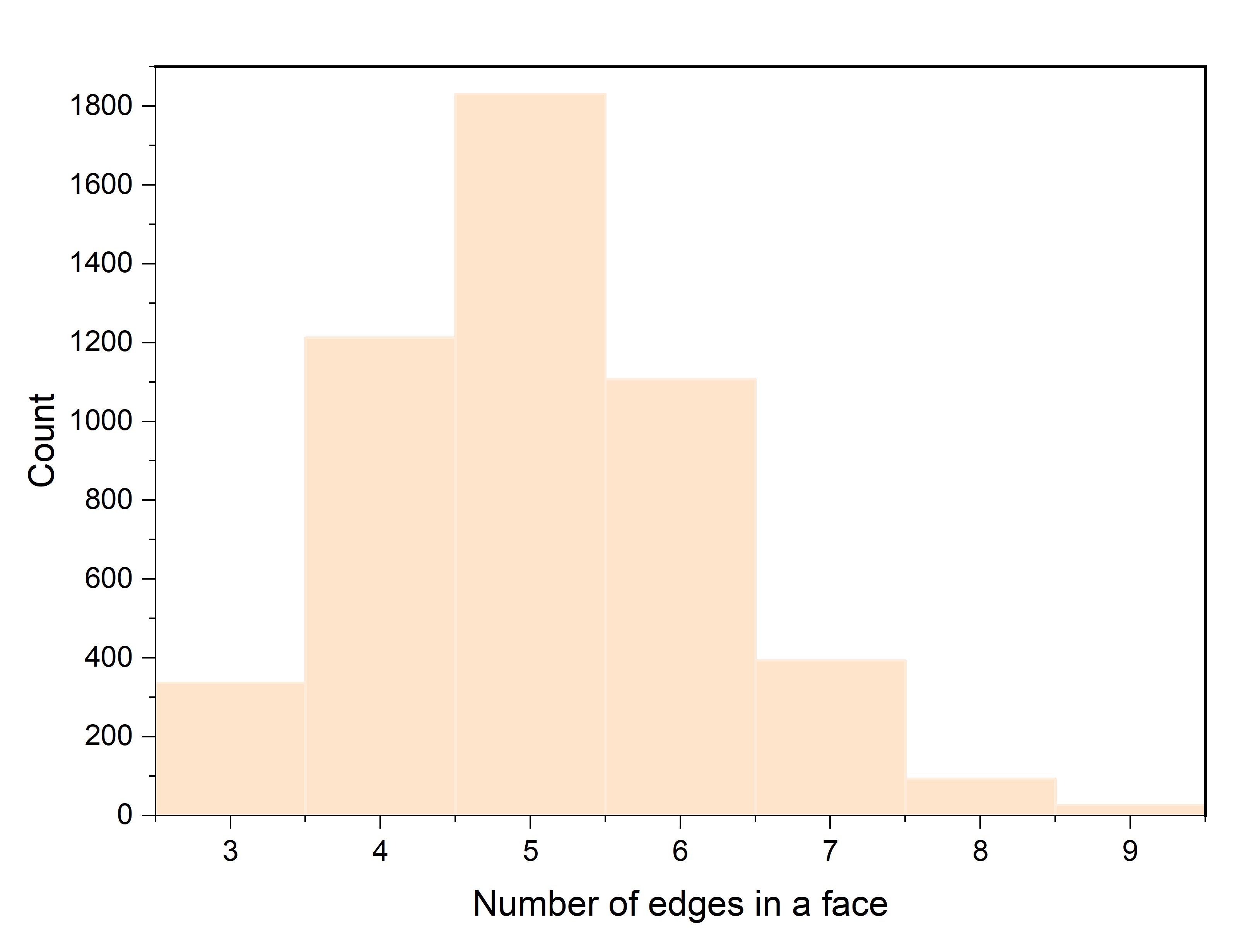} 
    \subcaption{}  
  \end{minipage}
  \caption{Histogram of polyhedral shapes and corresponding average \(l\) in the mesh with \(h=0.2\).}
  \label{fig:histo}
\end{figure}

\begin{figure}
  \centering                   
  \begin{minipage}{0.5\textwidth}
    \centering
    \includegraphics[width=\textwidth]{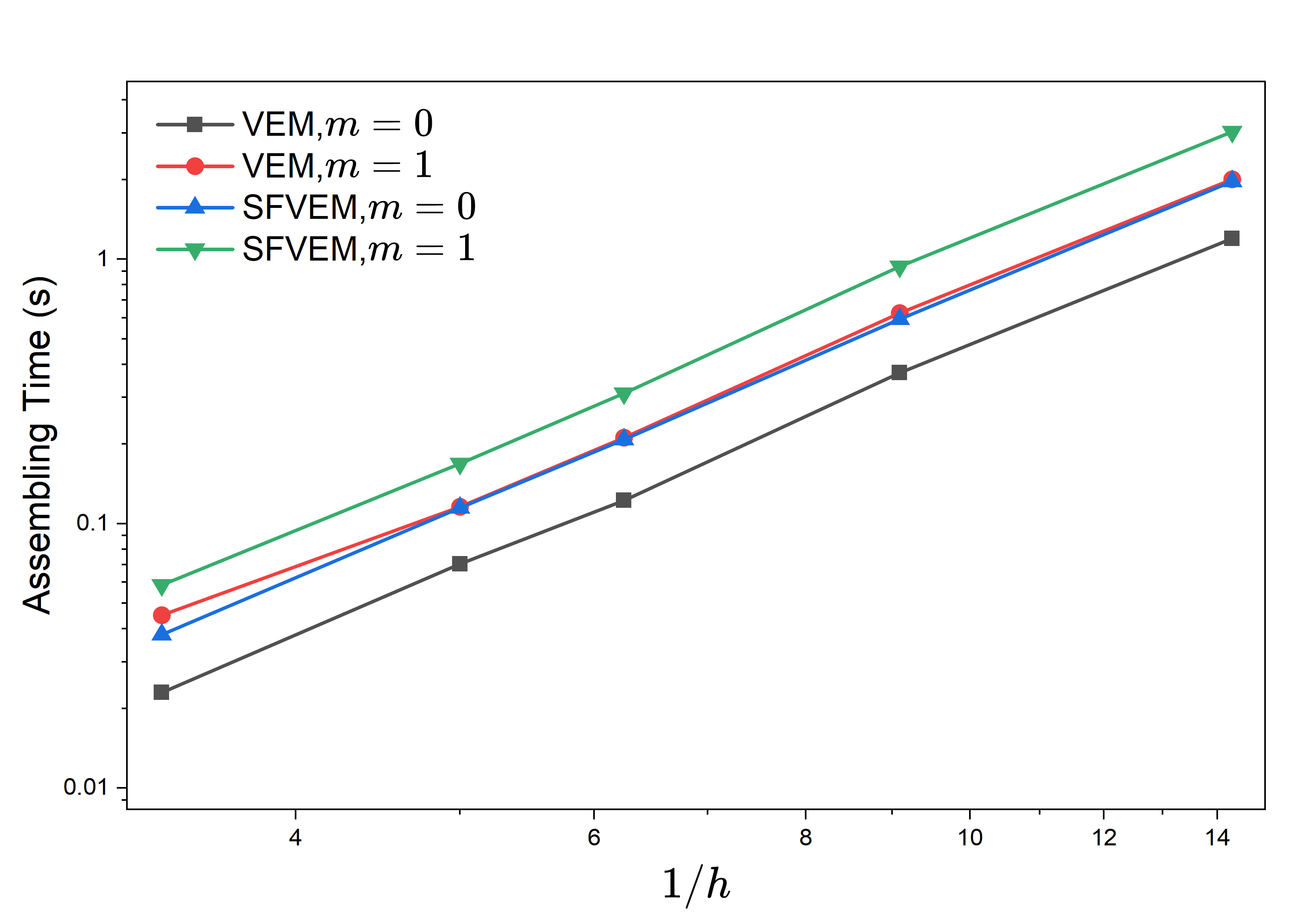} 
    \subcaption{}  
  \end{minipage}          
  \caption{Assembling Time Comparison.} 
  \label{fig:asstime}
\end{figure}

\subsection{Convergence Analysis} Since the \(\boldsymbol{u}_{1}\) is infinitely regular, by \cref{thm:error}, the error is expected to converge at the rate of the approximation space order, that is \(\left\|\boldsymbol{u}_{1}-\boldsymbol{u}_{1h}\right\|=O\left(h^{m+1}\right), \left|\lambda_{1}-\lambda_{1h}\right|=O\left(h^{2m+2}\right)\). Similarly, since \(\boldsymbol{u}_{2}\in \mathbf{H}^s\left(\Omega\right)\) with \(1/2<s<1\), and the convergence rate is expected to be \(\left\|\boldsymbol{u}_{2}-\boldsymbol{u}_{2h}\right\|=O\left(h^{s}\right), \left|\lambda_{2}-\lambda_{2h}\right|=O\left(h^{2s}\right)\), regardless of the approximation space order. Numerically, we measure the \(\mathrm{L}^2\)-norm by the equivalent discrete norm calculated from DoFs. The \(\mathrm{L}^2\)-norm errors and relative eigenvalue errors are plotted in \cref{fig:conv1,fig:conv2}, where the slope of the log-log plot agrees with the theoretical convergence rates.

Note that in \cref{fig:conv1,fig:conv2}, \(m\)-th order SFVEM corresponds to \(\left(m+1\right)\)-th order FEM due to differences in the definition of polynomial order in the respective methods. Since the memory consumption of 3D problem grows cubically with both \(m\) and \(1/h\), we can only afford to simulate cases with \(m=0\) and \(m=1\) in this work. Indeed, higher order spaces are unnecessary or even unfavorable for the model problem, since singularities exist for some solutions.

\begin{figure}[htbp]            
  \centering                   
  \begin{minipage}{0.5\textwidth}
    \centering
    \includegraphics[width=\textwidth]{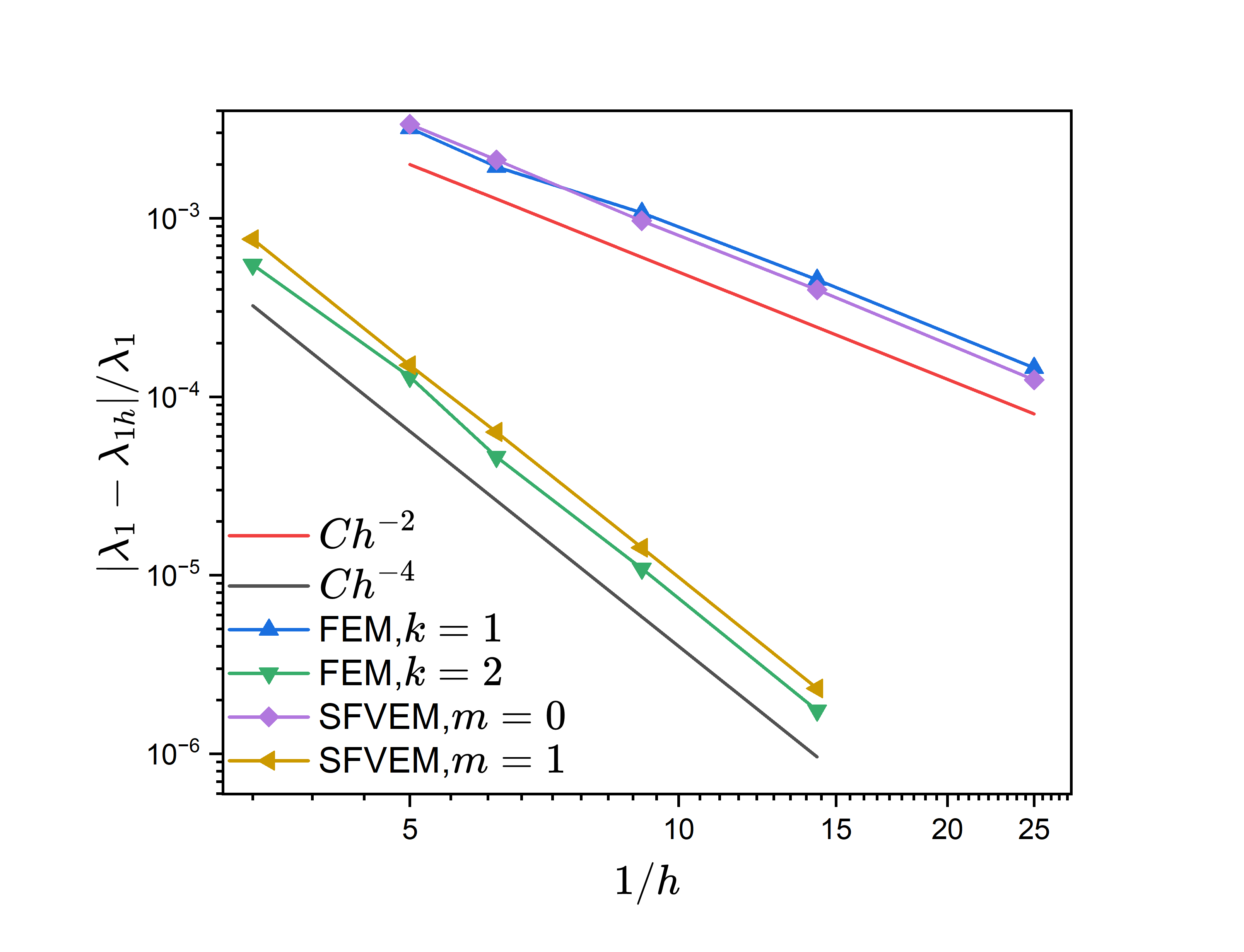} 
    \subcaption{}  
  \end{minipage}\hfill           
  \begin{minipage}{0.5\textwidth}
    \centering
    \includegraphics[width=\textwidth]{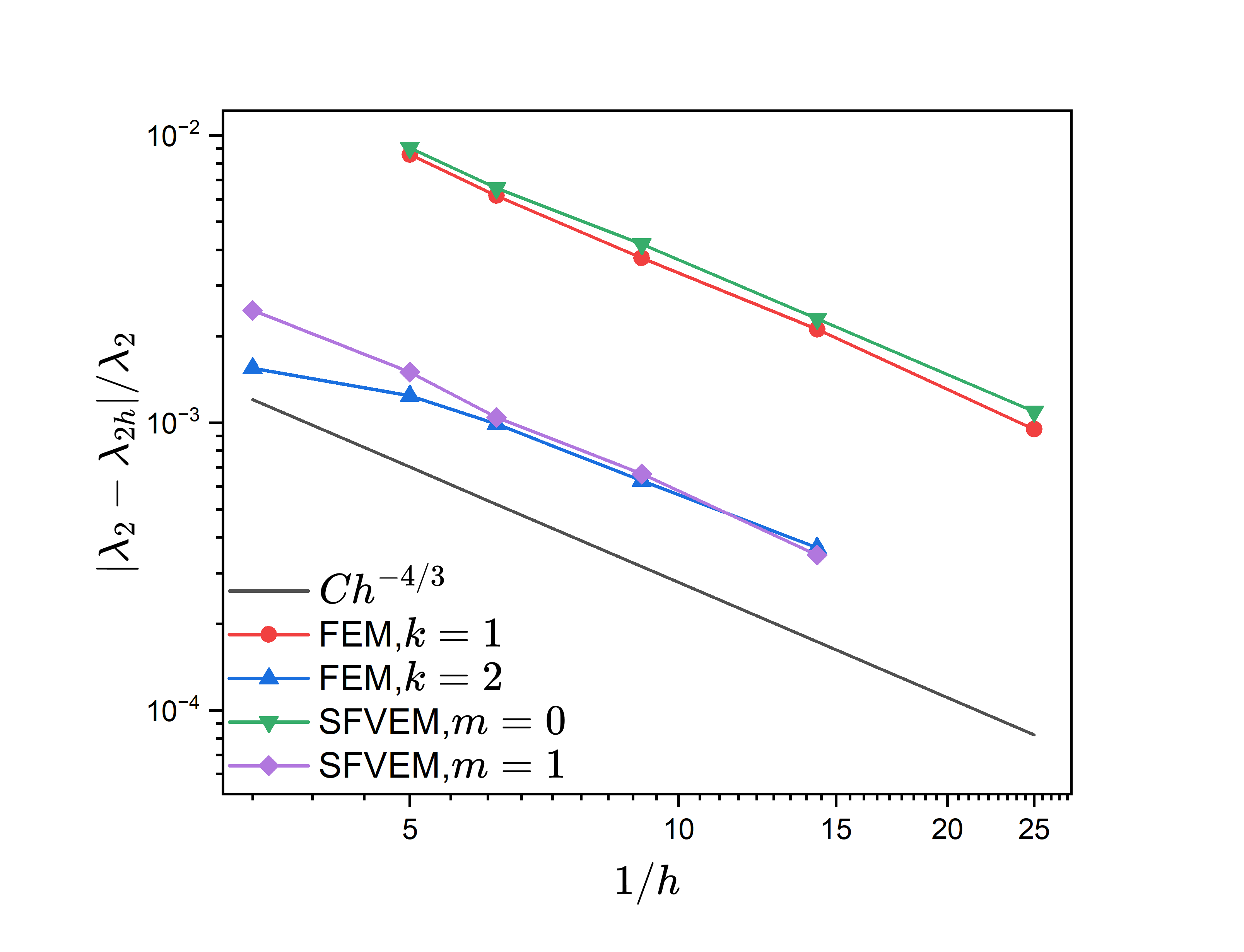} 
    \subcaption{}  
  \end{minipage}
  \caption{Relative Eigenvalue Errors. (a): \(\left|\lambda_{1}-\lambda_{1h}\right|/\lambda_1\). (b): \(\left|\lambda_{2}-\lambda_{2h}\right|/\lambda_2\).} 
  \label{fig:conv1}
\end{figure}

\begin{figure}[htbp]            
  \centering                   
  \begin{minipage}{0.5\textwidth}
    \centering
    \includegraphics[width=\textwidth]{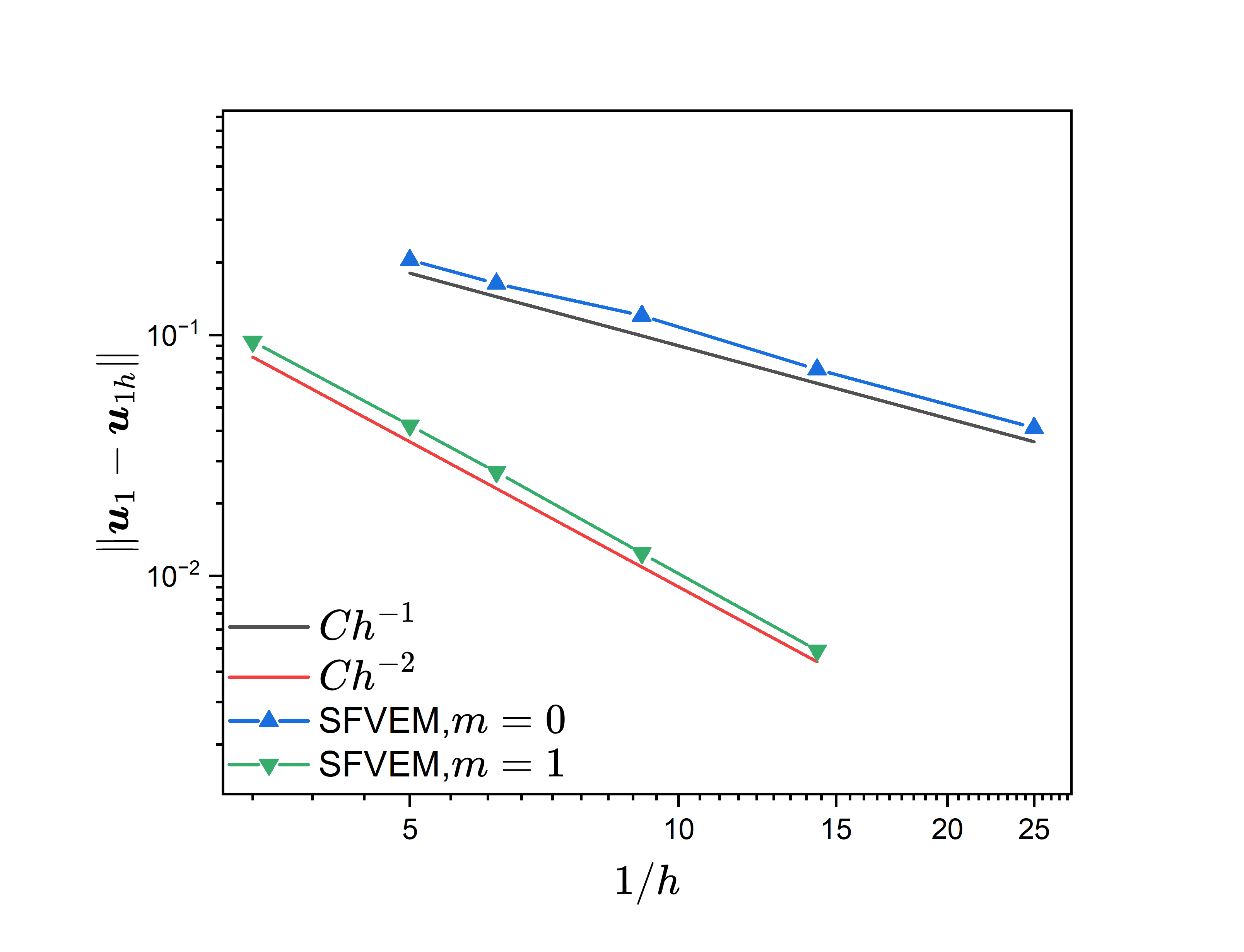} 
    \subcaption{}  
  \end{minipage}\hfill           
  \begin{minipage}{0.5\textwidth}
    \centering
    \includegraphics[width=\textwidth]{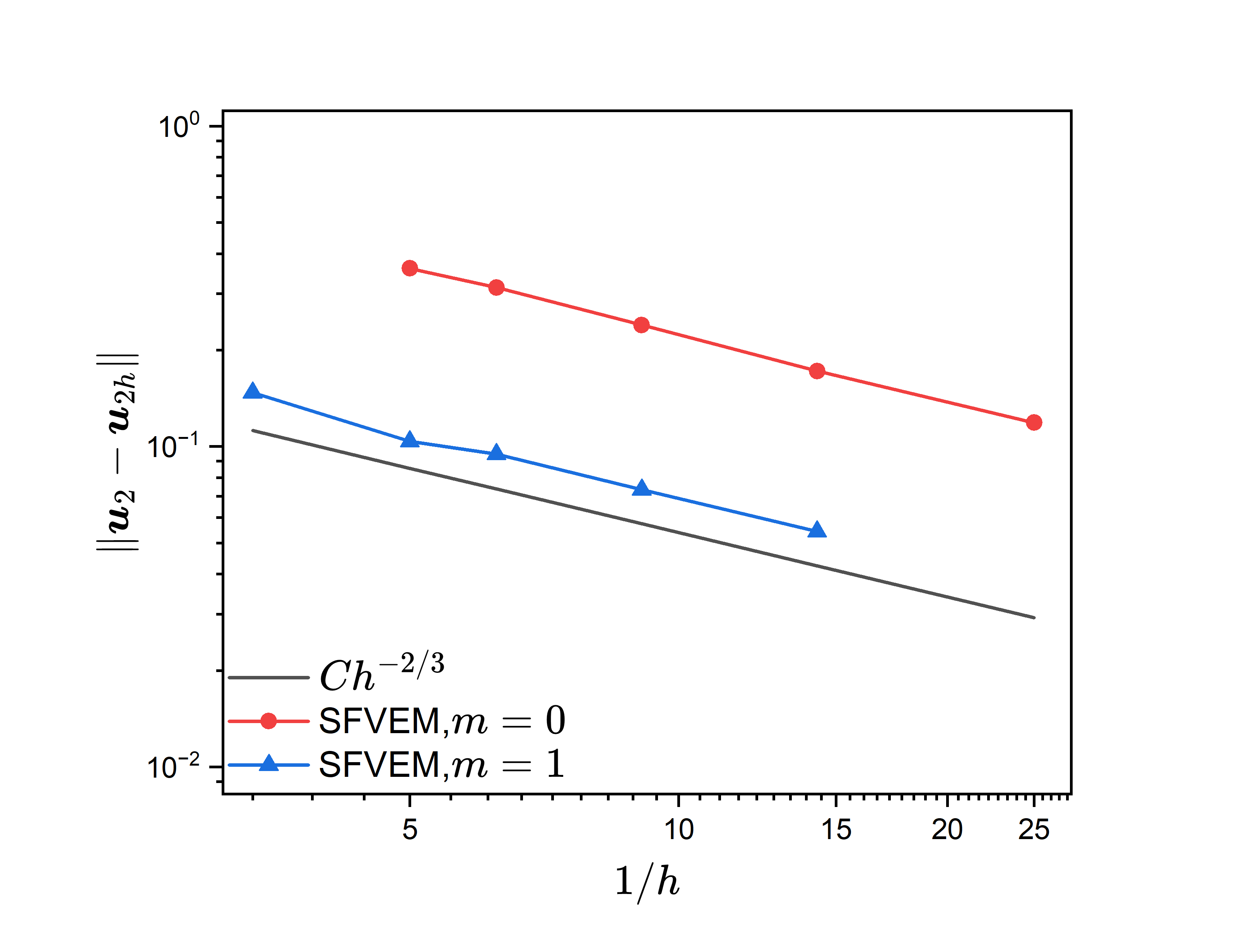} 
    \subcaption{}  
  \end{minipage}
  \caption{\(\mathrm{L}^2\)-norm Error. (a): \(\left\|\boldsymbol{u}_{1}-\boldsymbol{u}_{1h}\right\|\). (b): \(\left\|\boldsymbol{u}_{2}-\boldsymbol{u}_{2h}\right\|\).} 
  \label{fig:conv2}
\end{figure}

\section{Conclusion}

In this work, we have presented a novel stabilization-free virtual element method for general order \(\mathbf{H}(\operatorname{\mathbf{curl}})\) and \(\mathbf{H}(\operatorname{div})\)-conforming spaces. By constructing innovative serendipity projectors and corresponding serendipity spaces, our approach successfully addresses the fundamental challenges posed by stabilization terms in traditional virtual element methods.

Our key contributions include: (1) complete treatment of the De Rham complex chain in \(\mathbb{R}^3\) with exact commutativity of differential operators preserved; (2) construction of serendipity spaces that rigorously maintain boundary continuity, operator conformity, and B-compatibility; (3) achievement of stabilization-free computation with minimum number of degrees of freedom, significantly reducing computational overhead.

The theoretical analysis rigorously establishes optimal approximation properties of our method, while numerical experiments on Maxwell eigenvalue problems validate the theoretical convergence rates. For infinitely regular solutions, convergence rates of \(O(h^{m+1})\) are achieved for \(m\)-th order approximation spaces, while for singular solutions, convergence rates of \(O(h^s)\) are obtained, where \(s\) is determined by the solution regularity. These results are consistent with conventional finite element methods, yet our approach eliminates the difficulties associated with stabilization parameter selection.

The proposed stabilization-free virtual element method provides a theoretically rigorous and computationally efficient alternative for numerical solution of partial differential equations, particularly valuable for mixed formulations, eigenvalue problems, nonlinear problems, and applications requiring robust numerical behavior across diverse mesh configurations. This development opens new avenues for the advancement and application of virtual element methods in computational mathematics and engineering.

Finally, we must point out that though our method can be generalized to the full chain of De Rham complex on \(\mathbb{R}^d\), by recursively applying the same technique to the boundary of each dimension. However, on higher dimensional space, the VEM itself faces implementation complexity that outweighs its advantage. Furthermore, the rapid increase in amount of the boundary DoFs as the spatial dimension grows makes stabilization-free technique unacceptably inefficient. These practical considerations lead us to limit our discussion to the current setting.

\appendix
\section{Polynomial Properties}(\cite[Section 2.3]{daveigaInterpolationStabilityEstimates2022})

The following polynomial inverse estimates in a polytopal domain \(D \subset \mathbb{R}^d(d=2,3)\) are valid: for all \(p_k \in \mathbb{P}_k(D)\),
\begin{equation}\label{eq:polyinv1}
  \left\|p_k\right\|_{\partial D} \lesssim h_D^{-\frac{1}{2}}\left\|p_k\right\|_D, \quad\left|p_k\right|_{1, D} \lesssim h_D^{-1}\left\|p_k\right\|_D, \quad\left\|p_k\right\|_D \lesssim h_D^{-1}\left\|p_k\right\|_{-1, D}.
\end{equation}

Furthermore, for each piecewise polynomial \(p_k\) of degree at most \(k\) over \(\partial D\), we have:
\begin{equation}\label{eq:polyinv2}
  \left\|p_k\right\|_{\partial D} \lesssim h_D^{-\frac{1}{2}}\left\|p_k\right\|_{-\frac{1}{2}, \partial D}.
\end{equation}

Moreover, the following properties hold for the polynomial spaces:

Direct sum decomposition:
\begin{subequations}
  \begin{align}
    \mathbb{P}^3_{k,3} & =\nabla \mathbb{P}_{k+1,3}\oplus \boldsymbol{x}\times \mathbb{P}^3_{k-1,3},   \label{eq:pdcomp1}   \\
    \mathbb{P}^3_{k,3} & =\nabla \times \mathbb{P}^3_{k+1,3}\oplus \boldsymbol{x}\mathbb{P}_{k-1,3},   \label{eq:pdcomp2}   \\
    \mathbb{P}^2_{k,2} & =\nabla_{F} \mathbb{P}_{k+1,2}\oplus \boldsymbol{x}^\perp\mathbb{P}_{k-1,2},    \label{eq:pdcomp3} \\
    \mathbb{P}^2_{k,2} & =\nabla_{F} \times \mathbb{P}_{k+1,2}\oplus \boldsymbol{x}\mathbb{P}_{k-1,2}.\label{eq:pdcomp4}
  \end{align}
\end{subequations}

Isomorphism:
\begin{subequations}
  \begin{align}
     & \nabla:\mathbb{P}_{k+1}/\mathbb{R}\cong \nabla \mathbb{P}_{k+1}, \label{eq:iso1}                          \\
     & \nabla \times:\boldsymbol{x}\times \mathbb{P}_{k}^3\cong\nabla \times \mathbb{P}^3_{k+1}, \label{eq:iso2} \\
     & \nabla \cdot:\boldsymbol{x}\mathbb{P}_{k}\cong \mathbb{P}_{k}. \label{eq:iso3}
  \end{align}
\end{subequations}

Dimension:

Define:
\begin{equation}\label{eq:dimdef}
  \begin{aligned}
    \pi_{k,d}    & :=\binom{d+k}{k},         \\
    \gamma_{k,d} & :=\pi_{k+1,d}-1,          \\
    \rho_{k,3}   & :=3\pi_{k,3}-\pi_{k-1,3},
  \end{aligned}
\end{equation}
we have:
\begin{equation}
  \begin{aligned}
     & \dim\mathbb{P}_{k,d}=\pi_{k,d},                                                                \\
     & \dim\nabla \mathbb{P}_{k+1,3}=\gamma_{k,3},                                                    \\
     & \dim\nabla_{F} \mathbb{P}_{k+1,2}=\dim\nabla_{F} \times \mathbb{P}_{k+1,2}=\gamma_{k,2},       \\
     & \dim\nabla \times \mathbb{P}^3_{k+1,3}=\dim\boldsymbol{x}\times \mathbb{P}^3_{k,3}=\rho_{k,3}, \\
     & \dim\boldsymbol{x}\mathbb{P}_{k,3}=\pi_{k,3},                                                  \\
     & \dim\boldsymbol{x}\mathbb{P}_{k,2}=\dim\boldsymbol{x}^\perp\mathbb{P}_{k,2}=\pi_{k,2}.
  \end{aligned}
\end{equation}

\section{Trace Inequalities}(\cite[Section 2.4]{daveigaInterpolationStabilityEstimates2022})

The following trace inequalities are valid: given a polytopal domain \(D\), representing either an element or a face of the mesh, there hold:
\begin{subequations}
  \begin{align}
     & \|v\|_{\partial D} \lesssim h_D^{-\frac{1}{2}}\|v\|_D+h_D^{\delta-\frac{1}{2}}|v|_{\delta, D}                              & \forall v \in H^\delta(D), \frac{1}{2}<\delta<\frac{3}{2}\label{eq:trace1}     \\
     & |v|_{\varepsilon, \partial D} \lesssim h_D^{-\left(\varepsilon+\frac{1}{2}\right)}\|v\|_D+|v|_{\varepsilon+\frac{1}{2}, D} & \forall v \in H^{\varepsilon+\frac{1}{2}}(D), 0<\varepsilon<1\label{eq:trace2}
  \end{align}
\end{subequations}

Let \(F\) be a polygon and \(K\) be a polyhedron, respectively, representing either a face \(F\) or an element \(K\) of the mesh, thus satisfying the above assumptions (M). For \(\boldsymbol{v} \in \mathbf{H}\left(\operatorname{rot}_F, F\right)\), \(\boldsymbol{\phi} \in \mathbf{H}(\operatorname{div}, K), \boldsymbol{\psi} \in \mathbf{H}(\mathbf{curl}, K)\), and \(\boldsymbol{\chi} \in \mathbf{H}(\operatorname{div}, K) \cap \mathbf{H}(\mathbf{curl}, K)\), the following trace inequalities are valid:
\begin{subequations}
  \begin{align}
     & \left\|\boldsymbol{v} \cdot \mathbf{t}\right\|_{-\frac{1}{2}, \partial F} \lesssim\|\boldsymbol{v}\|_F+h_F\left\|\nabla_F \times \boldsymbol{v}\right\|_F\label{eq:vtrace1}                                                                                                                                      \\
     & \left\|\boldsymbol{\phi} \cdot \mathbf{n}\right\|_{-\frac{1}{2}, \partial K} \lesssim\|\boldsymbol{\phi}\|_K+h_K\|\nabla \cdot \boldsymbol{\phi}\|_K\label{eq:vtrace2}                                                                                                                                           \\
     & \left\|\boldsymbol{\psi} \times \mathbf{n}\right\|_{-\frac{1}{2}, \partial K} \lesssim\|\boldsymbol{\psi}\|_K+h_K\|\nabla \times \boldsymbol{\psi}\|_K\label{eq:vtrace3}                                                                                                                                         \\
     & \left\|\boldsymbol{\chi} \times \mathbf{n}\right\|_{\partial K} \lesssim h_K^{-\frac{1}{2}}\|\boldsymbol{\chi}\|_K+h_K^{\frac{1}{2}}\|\nabla \cdot \boldsymbol{\chi}\|_K+h_K^{\frac{1}{2}}\|\nabla \times \boldsymbol{\chi}\|_K+\left\|\boldsymbol{\chi} \cdot \mathbf{n}\right\|_{\partial K}\label{eq:vtrace4}
  \end{align}
\end{subequations}

\bibliographystyle{siamplain}
\bibliography{references}

\begin{thebibliography}{10}

\bibitem{amroucheVectorPotentialsThreedimensional1998}
{\sc C.~Amrouche, C.~Bernardi, M.~Dauge, and V.~Girault}, {\em Vector potentials in three-dimensional non-smooth domains}, Mathematical Methods in the Applied Sciences, 21 (1998), pp.~823--864, \url{https://doi.org/10.1002/(SICI)1099-1476(199806)21:9<823::AID-MMA976>3.0.CO;2-B}.

\bibitem{antoniettiVirtualElementMethod2022a}
{\sc P.~F. Antonietti, L.~Beir{\~a}o Da~Veiga, and G.~Manzini}, eds., {\em The {{Virtual Element Method}} and Its {{Applications}}}, vol.~31 of {{SEMA SIMAI Springer Series}}, Springer International Publishing, Cham, 2022, \url{https://doi.org/10.1007/978-3-030-95319-5}.

\bibitem{antoniettiFastNumericalIntegration2018}
{\sc P.~F. Antonietti, P.~Houston, and G.~Pennesi}, {\em Fast {{Numerical Integration}} on {{Polytopic Meshes}} with {{Applications}} to {{Discontinuous Galerkin Finite Element Methods}}}, Journal of Scientific Computing, 77 (2018), pp.~1339--1370, \url{https://doi.org/10.1007/s10915-018-0802-y}.

\bibitem{babuskaEigenvalueProblems1991}
{\sc I.~Babu{\v s}ka and J.~Osborn}, {\em Eigenvalue problems}, in Handbook of {{Numerical Analysis}}, vol.~2, Elsevier, 1991, pp.~641--787, \url{https://doi.org/10.1016/S1570-8659(05)80042-0}.

\bibitem{beiraodaveigaVirtualElementApproximation2017}
{\sc L.~{Beir{\~a}o~da~Veiga}, F.~Brezzi, F.~Dassi, L.~Marini, and A.~Russo}, {\em Virtual {{Element}} approximation of {{2D}} magnetostatic problems}, Computer Methods in Applied Mechanics and Engineering, 327 (2017), pp.~173--195, \url{https://doi.org/10.1016/j.cma.2017.08.013}.

\bibitem{beiraodaveigaVirtualElementsMaxwell2022}
{\sc L.~{Beir{\~a}o da Veiga}, F.~Dassi, G.~Manzini, and L.~Mascotto}, {\em Virtual elements for {{Maxwell}}'s equations}, Computers \& Mathematics with Applications, 116 (2022), pp.~82--99, \url{https://doi.org/10.1016/j.camwa.2021.08.019}.

\bibitem{beiraodaveigaStabilityAnalysisVirtual2017}
{\sc L.~{Beir{\~a}o da Veiga}, C.~Lovadina, and A.~Russo}, {\em Stability analysis for the virtual element method}, Mathematical Models and Methods in Applied Sciences, 27 (2017), pp.~2557--2594, \url{https://doi.org/10.1142/S021820251750052X}.

\bibitem{beiraodaveigaVirtualElementMethod2023}
{\sc L.~Beir{\~a}o Da~Veiga, F.~Brezzi, L.~Marini, and A.~Russo}, {\em The virtual element method}, Acta Numerica, 32 (2023), pp.~123--202, \url{https://doi.org/10.1017/S0962492922000095}.

\bibitem{berroneComparisonStandardStabilization2022}
{\sc S.~Berrone, A.~Borio, and F.~Marcon}, {\em Comparison of standard and stabilization free {{Virtual Elements}} on anisotropic elliptic problems}, 129, p.~107971, \url{https://doi.org/10.1016/j.aml.2022.107971}, \url{https://www.sciencedirect.com/science/article/pii/S0893965922000386} (accessed 2025-01-25).

\bibitem{berroneLowestOrderStabilization2023}
{\sc S.~Berrone, A.~Borio, and F.~Marcon}, {\em Lowest order stabilization free {{Virtual Element Method}} for the {{2D Poisson}} equation}, Feb. 2023, \url{https://arxiv.org/abs/2103.16896}.

\bibitem{boffiMixedFiniteElement2013}
{\sc D.~Boffi, F.~Brezzi, and M.~Fortin}, {\em Mixed {{Finite Element Methods}} and {{Applications}}}, vol.~44 of Springer {{Series}} in {{Computational Mathematics}}, Springer Berlin Heidelberg, Berlin, Heidelberg, 2013, \url{https://doi.org/10.1007/978-3-642-36519-5}.

\bibitem{chenStabilizationfreeSerendipityVirtual2023}
{\sc A.~Chen and N.~Sukumar}, {\em Stabilization-free serendipity virtual element method for plane elasticity}, Computer Methods in Applied Mechanics and Engineering, 404 (2023), p.~115784, \url{https://doi.org/10.1016/j.cma.2022.115784}.

\bibitem{chenVirtualElementMethods2024}
{\sc C.~Chen, X.~Huang, and H.~Wei}, {\em Virtual {{Element Methods Without Extrinsic Stabilization}}}, SIAM Journal on Numerical Analysis, 62 (2024), pp.~567--591, \url{https://doi.org/10.1137/22M1504196}.

\bibitem{daveigaInterpolationStabilityEstimates2022}
{\sc L.~{da Veiga}, L.~Mascotto, and J.~Meng}, {\em Interpolation and stability estimates for edge and face virtual elements of general order}, Mathematical Models and Methods in Applied Sciences, 32 (2022), pp.~1589--1631, \url{https://doi.org/10.1142/S0218202522500373}.

\bibitem{daveigaDivCurlConforming2014}
{\sc L.~B. {da Veiga}, F.~Brezzi, L.~D. Marini, and A.~Russo}, {\em H(div) and {{H}}(curl)-conforming {{VEM}}}, July 2014, \url{https://arxiv.org/abs/1407.6822}.

\bibitem{veigaSerendipityFaceEdge2017}
{\sc L.~B. da~Veiga, F.~Brezzi, L.~D. Marini, and A.~Russo}, {\em Serendipity face and edge {{VEM}} spaces}, Rendiconti Lincei, 28 (2017), pp.~143--180, \url{https://doi.org/10.4171/rlm/756}.

\bibitem{broglieProblemesPropagationsGuidees1941}
{\sc L.~de~Broglie}, {\em {Probl{\`e}mes de propagations guid{\'e}es des ondes {\'e}lectromagn{\'e}tiques}}, Gauthier-Villars, Paris, 1941.

\bibitem{dinezzaHitchhikersGuideFractional2012}
{\sc E.~Di~Nezza, G.~Palatucci, and E.~Valdinoci}, {\em Hitchhiker's guide to the fractional {{Sobolev}} spaces}, Bulletin des Sciences Math{\'e}matiques, 136 (2012), pp.~521--573, \url{https://doi.org/10.1016/j.bulsci.2011.12.004}.

\bibitem{evansPartialDifferentialEquations2010}
{\sc L.~C. Evans}, {\em Partial Differential Equations}, no.~v. 19 in Graduate Studies in Mathematics, American Mathematical Society, Providence, R.I, 2nd ed~ed., 2010.

\bibitem{lam2015numba}
{\sc S.~K. Lam, A.~Pitrou, and S.~Seibert}, {\em Numba: A llvm-based python jit compiler}, in Proceedings of the Second Workshop on the LLVM Compiler Infrastructure in HPC, 2015, pp.~1--6.

\bibitem{multiphysics1998introduction}
{\sc C.~Multiphysics}, {\em Introduction to comsol multiphysics{\textregistered}}, COMSOL Multiphysics, Burlington, MA, accessed Feb, 9 (1998), p.~2018.

\bibitem{wriggersVirtualElementMethods2024}
{\sc P.~Wriggers, F.~Aldakheel, and B.~Hudobivnik}, {\em Virtual {{Element Methods}} in {{Engineering Sciences}}}, Springer International Publishing, \url{https://doi.org/10.1007/978-3-031-39255-9}, \url{https://link.springer.com/10.1007/978-3-031-39255-9} (accessed 2025-01-25).

\bibitem{xuStabilizationfreeVirtualElement2023}
{\sc B.-B. Xu, F.~Peng, and P.~Wriggers}, {\em Stabilization-free virtual element method for finite strain applications}, Computer Methods in Applied Mechanics and Engineering, 417 (2023), p.~116555, \url{https://doi.org/10.1016/j.cma.2023.116555}.

\bibitem{xuStabilizationfreeVirtualElement2024}
{\sc B.-B. Xu, F.~Peng, and P.~Wriggers}, {\em Stabilization-free virtual element method for {{3D}} hyperelastic problems}, Computational Mechanics,  (2024), \url{https://doi.org/10.1007/s00466-024-02501-4}.

\bibitem{Xue2023JaxFEM}
{\sc T.~Xue, S.~Liao, Z.~Gan, C.~Park, X.~Xie, W.~K. Liu, and J.~Cao}, {\em Jax-fem: A differentiable gpu-accelerated 3d finite element solver for automatic inverse design and mechanistic data science}, Computer Physics Communications, 291 (2023), p.~108802, \url{https://doi.org/10.1016/j.cpc.2023.108802}, \url{http://dx.doi.org/10.1016/j.cpc.2023.108802}.

\end{thebibliography}

\end{document}


\maketitle

\section{Stable Polynomial Projections}

This section proves \cref{thm:st} by first reducing stable polynomial
projection to two structural ingredients: moment compatibility across orders
and eventual separation of the preserved DoFs by polynomial moments. For the
face space, the required moment identities come from integration by parts, and
the large-order injectivity is obtained from the finite-degree separation of
the preserved boundary data by polynomial moments together with volume-bubble
tests. For the edge space, the
argument combines a low-order/high-order splitting with rotational boundary
testing to separate tangential traces face by face, and then closes the last
step by a low-order nonreentry argument based on the same face-supported lifts
together with a zero-boundary bubble correction that removes the bulk curl
term.

\begin{proposition}[Generic DoF Criterion for Stable Polynomial Projection]
  \label{prop:supp-criterion}
  Let \(\mathbf{V}^{\mathrm{S}}_{l}\) denote either the serendipity face space
  \(\mathbf{V}^{\mathrm{Sf}}_{k,l}(K)\) or the serendipity edge space
  \(\mathbf{V}^{\mathrm{Se}}_{k,l}(K)\), and assume that for every admissible
  \(l\) the preserved DoFs
  \(\left\{\mathcal{F}_{i}\right\}_{i=1}^{N_{\mathrm{S}}}\) determine a unique
  function in \(\mathbf{V}^{\mathrm{S}}_{l}\). Let
  \(\mathscr{R}_{l}:\mathbb{R}^{N_{\mathrm{S}}}\rightarrow \mathbf{V}^{\mathrm{S}}_{l}\)
  denote the corresponding reconstruction operator, namely
  \(\mathcal{D}_{N_{\mathrm{S}}}(\mathscr{R}_{l}\mathbf{d})=\mathbf{d}\).

  Assume moreover that:
  \begin{enumerate}
    \def\labelenumi{(\roman{enumi})}
    \item for every \(m\leq l\), every \(\mathbf{d}\in\mathbb{R}^{N_{\mathrm{S}}}\), and every
          \(\boldsymbol{p}_{m}\in\mathbb{P}^{3}_{m}(K)\),
          \begin{equation}
            \left(\mathscr{R}_{l}\mathbf{d},\boldsymbol{p}_{m}\right)_{K}=\left(\mathscr{R}_{m}\mathbf{d},\boldsymbol{p}_{m}\right)_{K};
          \end{equation}
    \item for every nonzero \(\mathbf{d}\in\mathbb{R}^{N_{\mathrm{S}}}\), there exist
          some \(l\) and some \(\boldsymbol{p}_{l}\in\mathbb{P}^{3}_{l}(K)\) such that
          \begin{equation}
            \left(\mathscr{R}_{l}\mathbf{d},\boldsymbol{p}_{l}\right)_{K}\neq 0.
          \end{equation}
  \end{enumerate}

  Then there exists \(l_{*}\) such that \(\bfitgreek{\Pi}_{l}\) is injective on
  \(\mathbf{V}^{\mathrm{S}}_{l}\) for every \(l\geq l_{*}\). Equivalently, for each
  such \(l\) there exists a constant \(c_{l}>0\) satisfying
  \begin{equation}
    c_{l}\|\boldsymbol{v}\|_{K}\leq\left\|\bfitgreek{\Pi}_{l}\boldsymbol{v}\right\|_{K}\leq\|\boldsymbol{v}\|_{K}
    \quad \forall \boldsymbol{v}\in\mathbf{V}^{\mathrm{S}}_{l}.
  \end{equation}
\end{proposition}

\begin{proof}
  For each admissible \(l\), define the linear map
  \(\mathscr{M}_{l}:\mathbb{R}^{N_{\mathrm{S}}}\rightarrow \left(\mathbb{P}^{3}_{l}(K)\right)'\) by
  \begin{equation}
    \mathscr{M}_{l}(\mathbf{d})(\boldsymbol{p}_{l}):=\left(\mathscr{R}_{l}\mathbf{d},\boldsymbol{p}_{l}\right)_{K}
    \quad \forall \boldsymbol{p}_{l}\in\mathbb{P}^{3}_{l}(K).
  \end{equation}
  Let \(\mathscr{K}_{l}:=\ker \mathscr{M}_{l}\). By assumption (i), for every
  \(m\leq l\) and every \(\mathbf{d}\in \mathscr{K}_{l}\), we have
  \begin{equation}
    \left(\mathscr{R}_{m}\mathbf{d},\boldsymbol{p}_{m}\right)_{K}=\left(\mathscr{R}_{l}\mathbf{d},\boldsymbol{p}_{m}\right)_{K}=0
    \quad \forall \boldsymbol{p}_{m}\in\mathbb{P}^{3}_{m}(K),
  \end{equation}
  hence \(\mathscr{K}_{l}\subseteq \mathscr{K}_{m}\). Therefore
  \(\{\mathscr{K}_{l}\}_{l}\) is a descending chain of subspaces of the fixed finite
  dimensional space \(\mathbb{R}^{N_{\mathrm{S}}}\).

  By assumption (ii), for every nonzero \(\mathbf{d}\in\mathbb{R}^{N_{\mathrm{S}}}\)
  there exists some \(l\) such that \(\mathbf{d}\notin \mathscr{K}_{l}\). Hence
  \begin{equation}
    \bigcap_{l}\mathscr{K}_{l}=\{0\}.
  \end{equation}
  Since a descending chain of subspaces in a finite dimensional space must
  stabilize, there exists \(l_{*}\) such that
  \begin{equation}
    \mathscr{K}_{l}=\{0\}\quad \forall l\geq l_{*}.
  \end{equation}

  Now let \(l\geq l_{*}\) and \(\boldsymbol{v}\in\mathbf{V}^{\mathrm{S}}_{l}\). Write
  \(\boldsymbol{v}=\mathscr{R}_{l}\mathbf{d}\) with
  \(\mathbf{d}=\mathcal{D}_{N_{\mathrm{S}}}(\boldsymbol{v})\). Then
  \(\bfitgreek{\Pi}_{l}\boldsymbol{v}=0\) is equivalent to
  \begin{equation}
    \left(\boldsymbol{v},\boldsymbol{p}_{l}\right)_{K}=0
    \quad \forall \boldsymbol{p}_{l}\in\mathbb{P}^{3}_{l}(K),
  \end{equation}
  namely \(\mathbf{d}\in\mathscr{K}_{l}\). Since \(\mathscr{K}_{l}=\{0\}\), we obtain
  \(\mathbf{d}=0\), hence \(\boldsymbol{v}=0\). Therefore \(\bfitgreek{\Pi}_{l}\) is injective on
  \(\mathbf{V}^{\mathrm{S}}_{l}\).

  Finally, for each fixed \(l\geq l_{*}\), injectivity of \(\bfitgreek{\Pi}_{l}\) and
  finite dimensionality of \(\mathbf{V}^{\mathrm{S}}_{l}\) imply the norm equivalence
  \begin{equation}
    c_{l}\|\boldsymbol{v}\|_{K}\leq\left\|\bfitgreek{\Pi}_{l}\boldsymbol{v}\right\|_{K}\leq\|\boldsymbol{v}\|_{K}
    \quad \forall \boldsymbol{v}\in\mathbf{V}^{\mathrm{S}}_{l},
  \end{equation}
  for some constant \(c_{l}>0\).
\end{proof}

We now prove \cref{thm:st}.

\begin{proof}
  The face-space assertion is exactly \cref{thm:stf}. The edge-space assertion
  follows from \cref{cor:edge-full-space-stability}.
\end{proof}

\begin{proposition}[Moment Compatibility Across Orders]
  \label{prop:moment-compatibility}
  Let \(\mathbf{d}\in\mathbb{R}^{N_{\mathrm{S}}}\) be a preserved DoF vector.

  For the face space, let \(\boldsymbol{v}_{l}=\mathscr{R}_{l}\mathbf{d}\in
  \mathbf{V}^{\mathrm{Sf}}_{k,l}(K)\) and \(\boldsymbol{p}_{m}\in\mathbb{P}^{3}_{m}(K)\)
  with decomposition
  \(\boldsymbol{p}_{m}=\nabla q_{m+1}+\boldsymbol{x}_{K}\times \boldsymbol{r}_{m-1}\).
  Then
  \begin{equation}\label{eq:stbf}
    \begin{aligned}
      \left(\boldsymbol{v}_{l},\boldsymbol{p}_{m}\right)_{K}
       & =\left(\boldsymbol{v}_{l},\nabla q_{m+1}+\boldsymbol{x}_{K}\times\boldsymbol{r}_{m-1}\right)_{K} \\
       & =-\left(\nabla \cdot \boldsymbol{v}_{l},q_{m+1}\right)_{K}+\left(\boldsymbol{v}_{l}\cdot \boldsymbol{n},q_{m+1}\right)_{\partial K}+\left(\boldsymbol{v}_{l},\boldsymbol{x}_{K}\times\boldsymbol{r}_{m-1}\right)_{K},
    \end{aligned}
  \end{equation}
  and the right-hand side depends only on the preserved DoFs. Hence, for every
  admissible \(m\leq l\), the moment \(\left(\boldsymbol{v}_{l},\boldsymbol{p}_{m}\right)_{K}\)
  is independent of \(l\).

  For the edge space, let \(\boldsymbol{v}_{l}=\mathscr{R}_{l}\mathbf{d}\in
  \mathbf{V}^{\mathrm{Se}}_{k,l}(K)\) and \(\boldsymbol{p}_{m}\in\mathbb{P}^{3}_{m}(K)\)
  with decomposition
  \(\boldsymbol{p}_{m}=\nabla \times \boldsymbol{q}_{m+1}+\boldsymbol{x}_{K}r_{m-1}\).
  Writing on each face
  \(\boldsymbol{n}\times\boldsymbol{q}_{m+1}=\nabla_{F}\times a_{F,m+2}+\boldsymbol{x}_{F}b_{F,m}\),
  we have
  \begin{equation}\label{eq:stbe}
    \begin{aligned}
      \left(\boldsymbol{v}_{l},\boldsymbol{p}_{m}\right)_{K}
       & =\left(\boldsymbol{v}_{l},\nabla \times \boldsymbol{q}_{m+1}+\boldsymbol{x}_{K}r_{m-1}\right)_{K} \\
       & =\sum_{F\subseteq\partial K}\left(\left(\nabla_F \times \boldsymbol{v}_{l},a_{F,m+2}\right)_{F}-\left(\boldsymbol{v}_{l}\cdot \boldsymbol{t},a_{F,m+2}\right)_{\partial F}+\left(\boldsymbol{v}_{l},\boldsymbol{x}_{F}b_{F,m}\right)_{F}\right) \\
       & \quad +\left(\nabla \times \boldsymbol{v}_{l},\boldsymbol{q}_{m+1}\right)_{K}+\left(\boldsymbol{v}_{l},\boldsymbol{x}_{K}r_{m-1}\right)_{K},
    \end{aligned}
  \end{equation}
  and again the right-hand side depends only on the preserved DoFs. Hence, for
  every admissible \(m\leq l\), the moment \(\left(\boldsymbol{v}_{l},\boldsymbol{p}_{m}\right)_{K}\)
  is independent of \(l\).
\end{proposition}

\begin{proof}
  For the face space, identity \cref{eq:stbf} is the integration-by-parts formula
  applied to the decomposition
  \(\boldsymbol{p}_{m}=\nabla q_{m+1}+\boldsymbol{x}_{K}\times
  \boldsymbol{r}_{m-1}\). The divergence term, the normal-trace term, and the
  interior moment against \(\boldsymbol{x}_{K}\times\boldsymbol{r}_{m-1}\) are all
  determined by the preserved DoFs of
  \(\mathbf{V}^{\mathrm{Sf}}_{k,l}(K)\). Hence the right-hand side depends only on
  \(\mathbf{d}\), and is therefore independent of the reconstruction order \(l\)
  whenever \(m\leq l\).

  For the edge space, identity \cref{eq:stbe} is obtained by first integrating
  by parts in the bulk, and then decomposing the tangential trace of
  \(\boldsymbol{q}_{m+1}\) on each face into the form
  \(\boldsymbol{n}\times\boldsymbol{q}_{m+1}=\nabla_{F}\times a_{F,m+2}+\boldsymbol{x}_{F}b_{F,m}\),
  followed by face-wise integration by parts. The resulting terms involve only
  the preserved face and cell moments of
  \(\mathbf{V}^{\mathrm{Se}}_{k,l}(K)\), hence again depend only on
  \(\mathbf{d}\). Therefore the moments are independent of \(l\) for every
  admissible \(m\leq l\).
\end{proof}

\begin{lemma}[Finite-Degree Separation of Polynomial Normal Traces]
  \label{lem:trace-separation}
  Define the boundary polynomial space
  \begin{equation}
    \mathbb{B}_{k}(\partial K):=\left\{g\in L^2(\partial K)\mid g_{\mid F}\in\mathbb{P}_{k}(F)\ \forall F\subseteq\partial K\right\}.
  \end{equation}
  Then there exists an integer \(m_{\partial}=m_{\partial}(K,k)\) such that, for
  every \(m\geq m_{\partial}\),
  \begin{equation}
    \left(g,q_{m}\right)_{\partial K}=0\ \forall q_{m}\in\mathbb{P}_{m}(K)
    \implies g=0\quad \forall g\in\mathbb{B}_{k}(\partial K).
  \end{equation}
\end{lemma}

\begin{proof}
  Let
  \(\mathscr{T}_{m}:=\left\{q_{m\mid \partial K}\mid q_{m}\in\mathbb{P}_{m}(K)\right\}\subseteq C(\partial K)\).
  The union \(\bigcup_{m}\mathscr{T}_{m}\) is an algebra of continuous functions on
  the compact set \(\partial K\), contains the constants, and separates points.
  Hence, by the Stone-Weierstrass theorem,
  \(\bigcup_{m}\mathscr{T}_{m}\) is dense in \(C(\partial K)\), therefore also dense
  in \(L^2(\partial K)\).

  Define
  \begin{equation}
    \mathscr{Z}_{m}:=\left\{g\in\mathbb{B}_{k}(\partial K)\mid \left(g,q_{m}\right)_{\partial K}=0\ \forall q_{m}\in\mathbb{P}_{m}(K)\right\}.
  \end{equation}
  Then \(\{\mathscr{Z}_{m}\}_{m}\) is a descending chain of subspaces of the fixed
  finite-dimensional space \(\mathbb{B}_{k}(\partial K)\). If
  \(g\in\bigcap_{m}\mathscr{Z}_{m}\), then \(g\) is orthogonal to a dense subset of
  \(L^2(\partial K)\), hence \(g=0\). Therefore
  \(\bigcap_{m}\mathscr{Z}_{m}=\{0\}\). Since descending chains of subspaces in a
  finite-dimensional space stabilize, there exists
  \(m_{\partial}=m_{\partial}(K,k)\) such that \(\mathscr{Z}_{m}=\{0\}\) for all
  \(m\geq m_{\partial}\).
\end{proof}

\subsection{Face Spaces}

\begin{theorem}[Existence of Stable Polynomial Projection for Face Spaces]
  \label{thm:stf}
  There exists an integer \(l_{\mathrm{f},*}=l_{\mathrm{f},*}(K,k)\) such that
  \(\bfitgreek{\Pi}_{l}\) is injective on \(\mathbf{V}^{\mathrm{Sf}}_{k,l}(K)\) for every
  \(l\geq l_{\mathrm{f},*}\). Equivalently, for each such \(l\) there exists a
  constant \(c_{l}>0\) satisfying
  \begin{equation}
    c_{l}\|\boldsymbol{v}\|_{K}\leq\left\|\bfitgreek{\Pi}_{l}\boldsymbol{v}\right\|_{K}\leq\|\boldsymbol{v}\|_{K}
    \quad \forall \boldsymbol{v}\in\mathbf{V}^{\mathrm{Sf}}_{k,l}(K).
  \end{equation}
\end{theorem}

\begin{proof}
  Let \(\eta\) be the number of faces of \(K\), let
  \(m_{\partial}=m_{\partial}(K,k)\) be given by
  \cref{lem:trace-separation}, and choose
  \begin{equation}
    l_{\mathrm{f},*}\geq \max\left\{m_{\partial}-1,2\eta+k-2\right\}.
  \end{equation}

  Let \(l\geq l_{\mathrm{f},*}\) and suppose that
  \(\boldsymbol{v}\in\mathbf{V}^{\mathrm{Sf}}_{k,l}(K)\) satisfies
  \(\bfitgreek{\Pi}_{l}\boldsymbol{v}=0\). Then
  \begin{equation}
    \left(\boldsymbol{v},\boldsymbol{p}_{l}\right)_{K}=0
    \quad \forall \boldsymbol{p}_{l}\in\mathbb{P}^{3}_{l}(K).
  \end{equation}
  Using the decomposition
  \(\mathbb{P}^{3}_{l}(K)=\nabla \mathbb{P}_{l+1}(K)\oplus \boldsymbol{x}_{K}\times \mathbb{P}^{3}_{l-1}(K)\),
  we obtain
  \begin{subequations}
    \begin{align}
      \left(\boldsymbol{v},\boldsymbol{x}_{K}\times \boldsymbol{r}_{l-1}\right)_{K}=0
      \quad & \forall \boldsymbol{r}_{l-1}\in\mathbb{P}^{3}_{l-1}(K), \label{eq:supp-stfproof1} \\
      -\left(\nabla\cdot \boldsymbol{v},q_{l+1}\right)_{K}+\left(\boldsymbol{v}\cdot \boldsymbol{n},q_{l+1}\right)_{\partial K}=0
      \quad & \forall q_{l+1}\in\mathbb{P}_{l+1}(K). \label{eq:supp-stfproof2}
    \end{align}
  \end{subequations}

  Set \(d:=\nabla\cdot \boldsymbol{v}\in\mathbb{P}_{k-1}(K)\). For each face
  \(F\subseteq\partial K\), let \(\ell_{F}\) be an affine function satisfying
  \(\ell_{F\mid F}=0\), and define
  \begin{equation}
    b_{K}:=\prod_{F\subseteq\partial K}\ell_{F}^{2}.
  \end{equation}
  Then \(b_{K}\) is a polynomial vanishing on \(\partial K\) and strictly positive
  in the interior of \(K\). Since
  \(\deg(b_{K}d)\leq 2\eta+k-1\leq l+1\), taking
  \(q_{l+1}=b_{K}d\) in \cref{eq:supp-stfproof2} yields
  \begin{equation}
    0=-\left(d,b_{K}d\right)_{K}.
  \end{equation}
  Hence \(d=0\), namely
  \begin{equation}
    \nabla\cdot \boldsymbol{v}=0.
  \end{equation}

  Therefore \cref{eq:supp-stfproof2} reduces to
  \begin{equation}
    \left(\boldsymbol{v}\cdot \boldsymbol{n},q_{l+1}\right)_{\partial K}=0
    \quad \forall q_{l+1}\in\mathbb{P}_{l+1}(K).
  \end{equation}
  Since \(\boldsymbol{v}\cdot \boldsymbol{n}\in\mathbb{B}_{k}(\partial K)\) and
  \(l+1\geq m_{\partial}\), \cref{lem:trace-separation} implies
  \begin{equation}
    \boldsymbol{v}\cdot \boldsymbol{n}=0\quad \text{on }\partial K.
  \end{equation}

  Finally, applying \cref{eq:boundf} to
  \(\boldsymbol{v}\in\mathbf{V}^{\mathrm{Sf}}_{k,l}(K)\subseteq
  \mathbf{V}^{\mathrm{f}}_{k,k-1,l-1}(K)\), and using
  \cref{eq:supp-stfproof1}, we obtain
  \begin{equation}
    \|\boldsymbol{v}\|_{K}\lesssim h_{K}\|\nabla\cdot \boldsymbol{v}\|_{K}+h^{\frac{1}{2}}_{K}\|\boldsymbol{v}\cdot \boldsymbol{n}\|_{\partial K}+\sup_{\boldsymbol{r}_{l-1}\in\mathbb{P}^{3}_{l-1}(K)}\frac{\left(\boldsymbol{v},\boldsymbol{x}_{K}\times \boldsymbol{r}_{l-1}\right)_{K}}{\left\|\boldsymbol{x}_{K}\times \boldsymbol{r}_{l-1}\right\|_{K}}=0.
  \end{equation}
  Thus \(\boldsymbol{v}=0\), so \(\bfitgreek{\Pi}_{l}\) is injective on
  \(\mathbf{V}^{\mathrm{Sf}}_{k,l}(K)\). The asserted norm equivalence follows from
  finite dimensionality.
\end{proof}

\subsection{Edge Spaces: Facewise Tangential Separation}

Following the notation of the main text, for every face
\(F\subseteq\partial K\) and for the whole boundary \(\partial K\), we write
\(\boldsymbol{v}^{F}\) and \(\boldsymbol{v}^{\partial K}\) for the tangential traces
of \(\boldsymbol{v}\) on \(F\) and \(\partial K\), respectively.

For a face \(F\subseteq\partial K\), define the tangential polynomial space
\begin{equation}
  \mathbb{P}_{m,t}(F):=\left\{\bfitgreek{\psi}_{m}\in\mathbb{P}^{3}_{m}(F)\mid \bfitgreek{\psi}_{m}\cdot \boldsymbol{n}_{F}=0\right\}.
\end{equation}

\begin{lemma}[Face-Local Tangential Polynomial Lifting]
  \label{lem:face-lift}
  Let \(F\subseteq\partial K\) be a face, let \(\eta\) be the number of faces of
  \(K\), and let the affine functions \(\ell_{G}\) be as in the proof of
  \cref{thm:stf}. Define
  \begin{equation}
    b_{F}:=\prod_{G\subseteq\partial K,\ G\neq F}\ell_{G\mid F}^{2}\in \mathbb{P}_{2(\eta-1)}(F).
  \end{equation}
  Then \(b_{F}=0\) on \(\partial F\) and \(b_{F}>0\) in the interior of \(F\).
  Moreover, for every \(\bfitgreek{\psi}_{m}\in\mathbb{P}_{m,t}(F)\), there exists
  \(\boldsymbol{q}_{m+2(\eta-1)}\in\mathbb{P}^{3}_{m+2(\eta-1)}(K)\) such that
  \begin{equation}
    \boldsymbol{q}_{m+2(\eta-1)\mid G}=0\quad \forall G\subseteq\partial K,\ G\neq F,
    \qquad
    \boldsymbol{q}_{m+2(\eta-1)\mid F}=b_{F}\bfitgreek{\psi}_{m}.
  \end{equation}
\end{lemma}

\begin{proof}
  Since \(F\) is planar, every tangential polynomial field
  \(\bfitgreek{\psi}_{m}\in\mathbb{P}_{m,t}(F)\) admits a polynomial extension
  \(\mathscr{E}_{F}\bfitgreek{\psi}_{m}\in\mathbb{P}^{3}_{m}(K)\) that is constant
  along lines orthogonal to \(F\). In particular,
  \(\mathscr{E}_{F}\bfitgreek{\psi}_{m\mid F}=\bfitgreek{\psi}_{m}\).

  Define
  \begin{equation}
    \boldsymbol{q}_{m+2(\eta-1)}:=\left(\prod_{G\subseteq\partial K,\ G\neq F}\ell_{G}^{2}\right)\mathscr{E}_{F}\bfitgreek{\psi}_{m}.
  \end{equation}
  Its degree is \(m+2(\eta-1)\). On the face \(F\), the product restricts to
  \(b_{F}\), hence
  \(\boldsymbol{q}_{m+2(\eta-1)\mid F}=b_{F}\bfitgreek{\psi}_{m}\). On every other
  face \(G\neq F\), the factor \(\ell_{G}^{2}\) vanishes identically, so
  \(\boldsymbol{q}_{m+2(\eta-1)\mid G}=0\).

  Finally, if \(x\in\partial F\), then \(x\in F\cap G\) for some face \(G\neq F\),
  hence \(\ell_{G\mid F}(x)=0\) and therefore \(b_{F}(x)=0\). If
  \(x\in F\setminus\partial F\), then \(x\) lies in the interior of \(K\) relative
  to every face \(G\neq F\), so \(\ell_{G\mid F}(x)>0\), which implies
  \(b_{F}(x)>0\).
\end{proof}

\begin{lemma}[Density of Weighted Tangential Polynomial Fields on a Face]
  \label{lem:face-weighted-density}
  For every face \(F\subseteq\partial K\), the space
  \begin{equation}
    \mathscr{T}_{F}:=\bigcup_{m\geq 0} b_{F}\mathbb{P}_{m,t}(F)
  \end{equation}
  is dense in \(\mathbf{L}^{2}_{t}(F)\).
\end{lemma}

\begin{proof}
  Since \(C_{c}^{\infty}(F)^{3}\cap \mathbf{L}^{2}_{t}(F)\) is dense in
  \(\mathbf{L}^{2}_{t}(F)\), it suffices to approximate a tangential field
  \(\bfitgreek{\varphi}\in C_{c}^{\infty}(F)^{3}\cap \mathbf{L}^{2}_{t}(F)\).
  Let \(F'\Subset F\) be a compact set containing the support of
  \(\bfitgreek{\varphi}\). Since \(b_{F}>0\) in the interior of \(F\), there exists
  \(c_{F'}>0\) such that \(b_{F}\geq c_{F'}\) on \(F'\). Therefore
  \(\bfitgreek{\psi}:=\bfitgreek{\varphi}/b_{F}\in C(F')^{3}\cap
  \mathbf{L}^{2}_{t}(F)\).

  Choose an orthonormal basis \((\bfitgreek{\tau}_{1},\bfitgreek{\tau}_{2})\) of the
  tangent plane of \(F\), and write
  \(\bfitgreek{\psi}=\psi_{1}\bfitgreek{\tau}_{1}+\psi_{2}\bfitgreek{\tau}_{2}\).
  By the Weierstrass approximation theorem, there exist scalar polynomials
  \(p_{1,m},p_{2,m}\in\mathbb{P}_{m}(F)\) such that
  \(p_{i,m}\to \psi_{i}\) uniformly on \(F'\). Setting
  \begin{equation}
    \bfitgreek{\psi}_{m}:=p_{1,m}\bfitgreek{\tau}_{1}+p_{2,m}\bfitgreek{\tau}_{2}\in \mathbb{P}_{m,t}(F),
  \end{equation}
  we obtain
  \begin{equation}
    \|b_{F}\bfitgreek{\psi}_{m}-\bfitgreek{\varphi}\|_{F}=\|b_{F}(\bfitgreek{\psi}_{m}-\bfitgreek{\psi})\|_{F'}\to 0.
  \end{equation}
  This proves the claim.
\end{proof}

\begin{proposition}[Finite-Degree Separation on a Face]
  \label{prop:face-separation}
  For each face \(F\subseteq\partial K\), there exists an integer
  \(m_{F,*}=m_{F,*}(F,k)\) such that, for every admissible face order \(r\), every
  \(m\geq m_{F,*}\), and every
  \(\boldsymbol{w}\in\mathbf{V}^{\mathrm{Se}}_{k,r}(F)\),
  \begin{equation}
    \left(\boldsymbol{w},b_{F}\bfitgreek{\psi}_{m}\right)_{F}=0
    \quad \forall \bfitgreek{\psi}_{m}\in\mathbb{P}_{m,t}(F)
    \implies \boldsymbol{w}=0.
  \end{equation}
\end{proposition}

\begin{proof}
  Let \(N_{F}\) be the number of preserved face DoFs, and let
  \(\mathscr{R}^{F}_{r}:\mathbb{R}^{N_{F}}\to\mathbf{V}^{\mathrm{Se}}_{k,r}(F)\)
  denote the corresponding face-local reconstruction operator.

  For each \(m\geq 0\), define the linear map
  \(\mathscr{M}^{F}_{m}:\mathbb{R}^{N_{F}}\to (\mathbb{P}_{m,t}(F))'\) by
  \begin{equation}
    \mathscr{M}^{F}_{m}(\mathbf{d}_{F})(\bfitgreek{\psi}_{m}):=\left(\mathscr{R}^{F}_{r}\mathbf{d}_{F},b_{F}\bfitgreek{\psi}_{m}\right)_{F},
    \quad \forall \bfitgreek{\psi}_{m}\in\mathbb{P}_{m,t}(F),
  \end{equation}
  where \(r\) is any admissible face order such that
  \(m+2(\eta-1)\leq r\). Since \(b_{F}\bfitgreek{\psi}_{m}\) is a polynomial
  tangential field on the polygon \(F\), the same lower-order moment
  compatibility argument used in \cref{prop:moment-compatibility} shows that the
  right-hand side is independent of the chosen \(r\).

  Let \(\mathscr{K}^{F}_{m}:=\ker\mathscr{M}^{F}_{m}\). Then
  \(\{\mathscr{K}^{F}_{m}\}_{m}\) is a descending chain of subspaces of the fixed
  finite-dimensional space \(\mathbb{R}^{N_{F}}\). If
  \(\mathbf{d}_{F}\in \bigcap_{m}\mathscr{K}^{F}_{m}\), then for every admissible
  \(r\) the reconstructed field
  \(\boldsymbol{w}_{r}:=\mathscr{R}^{F}_{r}\mathbf{d}_{F}\) is orthogonal to
  \(\mathscr{T}_{F}\). By \cref{lem:face-weighted-density}, this implies
  \(\boldsymbol{w}_{r}=0\) in \(\mathbf{L}^{2}_{t}(F)\). Since the preserved face DoFs
  determine a unique function in \(\mathbf{V}^{\mathrm{Se}}_{k,r}(F)\), we obtain
  \(\mathbf{d}_{F}=0\). Therefore
  \begin{equation}
    \bigcap_{m}\mathscr{K}^{F}_{m}=\{0\}.
  \end{equation}
  Since descending chains of subspaces in a finite-dimensional space stabilize,
  there exists \(m_{F,*}\) such that
  \(\mathscr{K}^{F}_{m}=\{0\}\) for all \(m\geq m_{F,*}\). This is exactly the
  asserted separation property.
\end{proof}

\subsection{Full Edge Space}
\label{sec:edge-draft-route}

We now turn to the full edge space. After splitting by the low-order edge
projector, reduced-kernel fields automatically satisfy a family of rotational
boundary orthogonality relations. The same rotational family yields both the
boundary vanishing step and the low-order nonreentry step, so the full proof
closes entirely in this subsection.

For an admissible order \(l\), define the reduced high-order kernel
\begin{equation}
  \mathcal{Z}_{l}(K):=\left\{\boldsymbol{z}\in\mathbf{V}^{\mathrm{Se}}_{k,l}(K)\mid
  \bfitgreek{\Pi}^{\mathrm{Se}}_{k}\boldsymbol{z}=0,
  \ \bfitgreek{\Pi}_{l}\boldsymbol{z}=0\right\}.
\end{equation}

\begin{proposition}[Rotational Boundary Testing Implies Vanishing Tangential Trace]
  \label{prop:edge-direct-reduction}
  There exists an integer \(l_{\mathrm{t},*}=l_{\mathrm{t},*}(K,k)\) such that,
  for every admissible order \(l\geq l_{\mathrm{t},*}\), if
  \(\boldsymbol{z}\in\mathbf{V}^{\mathrm{Se}}_{k,l}(K)\) satisfies
  \begin{equation}
    \left(\boldsymbol{z}^{\partial K},\boldsymbol{n}\times
    \left(\boldsymbol{x}_{K}\times\boldsymbol{r}_{l-2}\right)\right)_{\partial K}=0
    \quad \forall \boldsymbol{r}_{l-2}\in\mathbb{P}^{3}_{l-2}(K),
  \end{equation}
  then
  \begin{equation}
    \boldsymbol{z}^{\partial K}=0\quad\text{on }\partial K.
  \end{equation}
\end{proposition}

\begin{proof}
  Let \(\eta\) be the number of faces of \(K\), and choose
  \begin{equation}
    l_{\mathrm{t},*}\geq
    \max_{F\subseteq\partial K}\left(m_{F,*}+2\eta\right).
  \end{equation}

  Let \(l\geq l_{\mathrm{t},*}\), let
  \(\boldsymbol{z}\in\mathbf{V}^{\mathrm{Se}}_{k,l}(K)\) satisfy the stated
  boundary orthogonality, and fix a face \(F\subseteq\partial K\). Set
  \begin{equation}
    c_{F}:=\boldsymbol{x}_{K}\cdot\boldsymbol{n}_{F},
    \qquad
    m:=l-2\eta\geq m_{F,*}.
  \end{equation}
  Here \(c_{F}\) is a nonzero constant on \(F\). Let
  \(\bfitgreek{\psi}_{m}\in\mathbb{P}_{m,t}(F)\). By \cref{lem:face-lift},
  there exists \(\widetilde{\boldsymbol{r}}_{l-2}\in\mathbb{P}^{3}_{l-2}(K)\) such that
  \begin{equation}
    \widetilde{\boldsymbol{r}}_{l-2\mid G}=0
    \quad \forall G\subseteq\partial K,\ G\neq F,
    \qquad
    \widetilde{\boldsymbol{r}}_{l-2\mid F}=b_{F}\bfitgreek{\psi}_{m}.
  \end{equation}
  Define
  \begin{equation}
    \boldsymbol{r}_{l-2}:=-c_{F}^{-1}\widetilde{\boldsymbol{r}}_{l-2}
    \in\mathbb{P}^{3}_{l-2}(K).
  \end{equation}
  On every face \(G\neq F\), one has
  \begin{equation}
    \boldsymbol{n}_{G}\times\left(\boldsymbol{x}_{K}\times\boldsymbol{r}_{l-2}\right)=0,
  \end{equation}
  because \(\boldsymbol{r}_{l-2}=0\) on \(G\). On the face \(F\), the trace
  of \(\boldsymbol{r}_{l-2}\) is tangential, so
  \(\boldsymbol{n}_{F}\cdot\boldsymbol{r}_{l-2}=0\). Hence the triple-product
  identity gives
  \begin{equation}
    \begin{aligned}
      \boldsymbol{n}_{F}\times\left(\boldsymbol{x}_{K}\times\boldsymbol{r}_{l-2}\right)
       & =\left(\boldsymbol{n}_{F}\cdot\boldsymbol{r}_{l-2}\right)\boldsymbol{x}_{K}
      -\left(\boldsymbol{n}_{F}\cdot\boldsymbol{x}_{K}\right)\boldsymbol{r}_{l-2} \\
       & =-c_{F}\boldsymbol{r}_{l-2}
      =\widetilde{\boldsymbol{r}}_{l-2}
      =b_{F}\bfitgreek{\psi}_{m}
      \quad\text{on }F.
    \end{aligned}
  \end{equation}
  Therefore the assumed boundary orthogonality yields
  \begin{equation}
    0=\left(\boldsymbol{z}^{\partial K},\boldsymbol{n}\times
    \left(\boldsymbol{x}_{K}\times\boldsymbol{r}_{l-2}\right)\right)_{\partial K}
    =\left(\boldsymbol{z}^{F},b_{F}\bfitgreek{\psi}_{m}\right)_{F}.
  \end{equation}
  Since \(\boldsymbol{z}^{F}\) belongs to the face-local serendipity edge family
  on \(F\), \cref{prop:face-separation} implies
  \begin{equation}
    \boldsymbol{z}^{F}=0\quad\text{on }F.
  \end{equation}
  As \(F\) was arbitrary, we conclude that
  \begin{equation}
    \boldsymbol{z}^{\partial K}=0\quad\text{on }\partial K.
  \end{equation}
\end{proof}

\begin{proposition}[Rotational Boundary Orthogonality on the Reduced Edge Kernel]
  \label{prop:edge-direct-identity}
  Let \(l\) be admissible and let \(\boldsymbol{z}\in\mathcal{Z}_{l}(K)\). Then,
  for every \(\boldsymbol{r}_{l-2}\in\mathbb{P}^{3}_{l-2}(K)\), one has
  \begin{equation}
    \left(\boldsymbol{z}^{\partial K},\boldsymbol{n}\times
    \left(\boldsymbol{x}_{K}\times\boldsymbol{r}_{l-2}\right)\right)_{\partial K}=0
    \quad \forall \boldsymbol{r}_{l-2}\in\mathbb{P}^{3}_{l-2}(K).
  \end{equation}
\end{proposition}

\begin{proof}
  Let \(\boldsymbol{q}_{l-1}:=\boldsymbol{x}_{K}\times\boldsymbol{r}_{l-2}\in
  \mathbb{P}^{3}_{l-1}(K)\). Since
  \(\nabla\times\boldsymbol{q}_{l-1}\in\mathbb{P}^{3}_{l-2}(K)\subseteq
  \mathbb{P}^{3}_{l}(K)\) and \(\bfitgreek{\Pi}_{l}\boldsymbol{z}=0\), we obtain
  \begin{equation}
    0=\left(\boldsymbol{z},\nabla\times\boldsymbol{q}_{l-1}\right)_{K}.
  \end{equation}
  Set
  \begin{equation}
    \boldsymbol{w}:=\nabla\times\boldsymbol{z}
    \in\mathbf{V}^{\mathrm{Sf}}_{k-1,l-1}(K).
  \end{equation}
  Integrating by parts gives
  \begin{equation}
    0=\left(\boldsymbol{w},\boldsymbol{q}_{l-1}\right)_{K}
    +\left(\boldsymbol{z}^{\partial K},\boldsymbol{n}\times\boldsymbol{q}_{l-1}\right)_{\partial K}.
  \end{equation}
  By the exact commuting property,
  \begin{equation}
    \bfitgreek{\Pi}^{\mathrm{Sf}}_{k-1}\boldsymbol{w}
    =\bfitgreek{\Pi}^{\mathrm{Sf}}_{k-1}(\nabla\times\boldsymbol{z})
    =\nabla\times\left(\bfitgreek{\Pi}^{\mathrm{Se}}_{k}\boldsymbol{z}\right)=0.
  \end{equation}
  Hence the defining condition of the face serendipity kernel yields
  \begin{equation}
    \left(\boldsymbol{w},\boldsymbol{x}_{K}\times\boldsymbol{r}_{l-2}\right)_{K}=0.
  \end{equation}
  Therefore
  \begin{equation}
    \left(\boldsymbol{z}^{\partial K},\boldsymbol{n}\times
    \left(\boldsymbol{x}_{K}\times\boldsymbol{r}_{l-2}\right)\right)_{\partial K}=0,
  \end{equation}
  proving the claim.
\end{proof}

We next prove the low-order nonreentry statement. Once this step is available,
the two preceding propositions close the full edge-space argument.

The point of the low-order step is to show that once
\(\bfitgreek{\Pi}^{\mathrm{Se}}_{k}\boldsymbol{z}=0\), the apparently missing
low-order information of \(\boldsymbol{p}_{k}:=\bfitgreek{\Pi}_{l}\boldsymbol{z}\in
\mathbb{P}^{3}_{k}(K)\) can be recovered from the same rotational testing family,
after a zero-boundary bubble correction that removes the bulk term involving
\(\nabla\times\boldsymbol{p}_{k}\).

\begin{proposition}[Low-Order Nonreentry on the High-Order Kernel of the Edge Projector]
  \label{prop:edge-nonreentry-from-transfer}
  There exists an integer \(l_{\mathrm{nr},*}=l_{\mathrm{nr},*}(K,k)\) such that,
  for every admissible order \(l\geq l_{\mathrm{nr},*}\) and every
  \(\boldsymbol{z}\in\mathbf{V}^{\mathrm{Se}}_{k,l}(K)\) with
  \(\bfitgreek{\Pi}^{\mathrm{Se}}_{k}\boldsymbol{z}=0\), one has
  \begin{equation}
    \bfitgreek{\Pi}_{l}\boldsymbol{z}\in \mathbb{P}^{3}_{k}(K)
    \implies \bfitgreek{\Pi}_{l}\boldsymbol{z}=0.
  \end{equation}
  Equivalently,
  \begin{equation}
    \bfitgreek{\Pi}_{l}\left(
    \ker \bfitgreek{\Pi}^{\mathrm{Se}}_{k}
    \cap \mathbf{V}^{\mathrm{Se}}_{k,l}(K)\right)
    \cap \mathbb{P}^{3}_{k}(K)=\{0\}.
  \end{equation}
\end{proposition}

\begin{proof}
  Let \(\eta\) be the number of faces of \(K\), let the affine functions
  \(\ell_{F}\) be as in the proof of \cref{thm:stf}, and set
  \begin{equation}
    b_{K}:=\prod_{F\subseteq\partial K}\ell_{F}^{2}.
  \end{equation}
  Choose
  \begin{equation}
    l_{\mathrm{nr},*}\geq
    \max\left\{l_{\mathrm{f},*}+1,\ k+2\eta+2,\ 
    \max_{F\subseteq\partial K}(m_{F,*}+2\eta)\right\}.
  \end{equation}

  Let \(l\geq l_{\mathrm{nr},*}\), and let
  \(\boldsymbol{z}\in\mathbf{V}^{\mathrm{Se}}_{k,l}(K)\) satisfy
  \begin{equation}
    \bfitgreek{\Pi}^{\mathrm{Se}}_{k}\boldsymbol{z}=0,
    \qquad
    \boldsymbol{p}_{k}:=\bfitgreek{\Pi}_{l}\boldsymbol{z}\in\mathbb{P}^{3}_{k}(K).
  \end{equation}
  We prove that \(\boldsymbol{p}_{k}=0\).

  Set
  \begin{equation}
    \boldsymbol{s}_{k-1}:=\nabla\times\boldsymbol{p}_{k}
    \in\mathbb{P}^{3}_{k-1}(K).
  \end{equation}
  Fix a face \(F\subseteq\partial K\), define
  \begin{equation}
    c_{F}:=\boldsymbol{x}_{K}\cdot\boldsymbol{n}_{F},
    \qquad
    m:=l-2\eta\geq m_{F,*},
  \end{equation}
  and let \(\bfitgreek{\psi}_{m}\in\mathbb{P}_{m,t}(F)\). By
  \cref{lem:face-lift}, there exists
  \(\widetilde{\boldsymbol{r}}_{l-2}\in\mathbb{P}^{3}_{l-2}(K)\) such that
  \begin{equation}
    \widetilde{\boldsymbol{r}}_{l-2\mid G}=0
    \quad \forall G\subseteq\partial K,\ G\neq F,
    \qquad
    \widetilde{\boldsymbol{r}}_{l-2\mid F}=b_{F}\bfitgreek{\psi}_{m}.
  \end{equation}
  Exactly as in the proof of \cref{prop:edge-direct-reduction}, the polynomial
  field
  \begin{equation}
    \boldsymbol{q}^{\,0}_{l-1}:=\boldsymbol{x}_{K}\times
    \left(-c_{F}^{-1}\widetilde{\boldsymbol{r}}_{l-2}\right)
    \in \boldsymbol{x}_{K}\times\mathbb{P}^{3}_{l-2}(K)
  \end{equation}
  satisfies
  \begin{equation}
    \boldsymbol{n}\times\boldsymbol{q}^{\,0}_{l-1}=0
    \quad\text{on }\partial K\setminus F,
    \qquad
    \boldsymbol{n}_{F}\times\boldsymbol{q}^{\,0}_{l-1}=b_{F}\bfitgreek{\psi}_{m}
    \quad\text{on }F.
  \end{equation}

  If \(\boldsymbol{s}_{k-1}=0\), set
  \(\boldsymbol{q}_{l-1}:=\boldsymbol{q}^{\,0}_{l-1}\). Otherwise,
  \(\boldsymbol{x}_{K}\times\boldsymbol{s}_{k-1}\not\equiv 0\), for if
  \(\boldsymbol{x}_{K}\times\boldsymbol{s}_{k-1}\equiv 0\), then
  \(\boldsymbol{s}_{k-1}=\boldsymbol{x}_{K}\sigma\) for some scalar polynomial
  \(\sigma\), and the identity
  \(\nabla\cdot\boldsymbol{s}_{k-1}=0\) would give
  \(3\sigma+\boldsymbol{x}_{K}\cdot\nabla\sigma=0\), hence
  \(\sigma=0\), a contradiction. Define
  \begin{equation}
    \boldsymbol{r}^{\,b}_{k+2\eta}:=b_{K}\,\boldsymbol{x}_{K}\times
    \boldsymbol{s}_{k-1}\in \mathbb{P}^{3}_{k+2\eta}(K)
    \subseteq\mathbb{P}^{3}_{l-2}(K),
    \qquad
    \boldsymbol{q}^{\,b}_{k+2\eta+1}:=\boldsymbol{x}_{K}\times
    \boldsymbol{r}^{\,b}_{k+2\eta}.
  \end{equation}
  Since \(b_{K}=0\) on \(\partial K\), one has
  \(\boldsymbol{n}\times\boldsymbol{q}^{\,b}_{k+2\eta+1}=0\) on \(\partial K\). Moreover,
  \begin{equation}
    \begin{aligned}
      \left(\boldsymbol{s}_{k-1},\boldsymbol{q}^{\,b}_{k+2\eta+1}\right)_{K}
       &= \left(\boldsymbol{s}_{k-1},\boldsymbol{x}_{K}\times
       \left(b_{K}\,\boldsymbol{x}_{K}\times\boldsymbol{s}_{k-1}\right)\right)_{K} \\
       &= -\left\|b_{K}^{\frac12}\,\boldsymbol{x}_{K}\times
       \boldsymbol{s}_{k-1}\right\|_{K}^{2}<0.
    \end{aligned}
  \end{equation}
  Therefore the field
  \begin{equation}
    \boldsymbol{q}_{l-1}:=\boldsymbol{q}^{\,0}_{l-1}
    -\frac{\left(\boldsymbol{s}_{k-1},\boldsymbol{q}^{\,0}_{l-1}\right)_{K}}
    {\left(\boldsymbol{s}_{k-1},\boldsymbol{q}^{\,b}_{k+2\eta+1}\right)_{K}}
    \boldsymbol{q}^{\,b}_{k+2\eta+1}
  \end{equation}
  belongs to \(\boldsymbol{x}_{K}\times\mathbb{P}^{3}_{l-2}(K)\), has the same
  boundary trace as \(\boldsymbol{q}^{\,0}_{l-1}\), and satisfies
  \begin{equation}
    \left(\boldsymbol{s}_{k-1},\boldsymbol{q}_{l-1}\right)_{K}=0.
  \end{equation}

  Set \(\boldsymbol{w}:=\nabla\times\boldsymbol{z}\in
  \mathbf{V}^{\mathrm{Sf}}_{k-1,l-1}(K)\). By the exact commuting property,
  \begin{equation}
    \bfitgreek{\Pi}^{\mathrm{Sf}}_{k-1}\boldsymbol{w}
    =\nabla\times\left(\bfitgreek{\Pi}^{\mathrm{Se}}_{k}\boldsymbol{z}\right)=0.
  \end{equation}
  Hence, exactly as in the proof of \cref{prop:edge-direct-identity}, the
  defining condition of the face serendipity kernel gives
  \begin{equation}
    \left(\boldsymbol{w},\boldsymbol{q}_{l-1}\right)_{K}=0.
  \end{equation}
  Since \(\nabla\times\boldsymbol{q}_{l-1}\in\mathbb{P}^{3}_{l-2}(K)
  \subseteq\mathbb{P}^{3}_{l}(K)\) and
  \(\bfitgreek{\Pi}_{l}\boldsymbol{z}=\boldsymbol{p}_{k}\), we obtain
  \begin{equation}
    0=\left(\boldsymbol{z}-\boldsymbol{p}_{k},
    \nabla\times\boldsymbol{q}_{l-1}\right)_{K}.
  \end{equation}
  Integrating by parts yields
  \begin{equation}
    0=\left(\boldsymbol{w}-\boldsymbol{s}_{k-1},\boldsymbol{q}_{l-1}\right)_{K}
    +\left((\boldsymbol{z}-\boldsymbol{p}_{k})^{\partial K},
    \boldsymbol{n}\times\boldsymbol{q}_{l-1}\right)_{\partial K}
    =\left((\boldsymbol{z}-\boldsymbol{p}_{k})^{F},
    b_{F}\bfitgreek{\psi}_{m}\right)_{F}.
  \end{equation}
  Since \((\boldsymbol{z}-\boldsymbol{p}_{k})^{F}\) belongs to the face-local
  serendipity edge family on \(F\), \cref{prop:face-separation}
  implies
  \begin{equation}
    (\boldsymbol{z}-\boldsymbol{p}_{k})^{F}=0\quad\text{on }F.
  \end{equation}
  As \(F\) was arbitrary, we conclude that
  \begin{equation}
    (\boldsymbol{z}-\boldsymbol{p}_{k})^{\partial K}=0\quad\text{on }\partial K.
  \end{equation}

  Set
  \begin{equation}
    \boldsymbol{u}:=\nabla\times(\boldsymbol{z}-\boldsymbol{p}_{k})
    \in\mathbf{V}^{\mathrm{Sf}}_{k-1,l-1}(K).
  \end{equation}
  Then \(\nabla\cdot\boldsymbol{u}=0\), and the vanishing tangential trace on each
  face implies
  \begin{equation}
    \boldsymbol{u}\cdot\boldsymbol{n}=0\quad\text{on }\partial K.
  \end{equation}
  Moreover, for every \(\boldsymbol{r}_{l-2}\in\mathbb{P}^{3}_{l-2}(K)\),
  \begin{equation}
    0=\left(\boldsymbol{z}-\boldsymbol{p}_{k},
    \nabla\times(\boldsymbol{x}_{K}\times\boldsymbol{r}_{l-2})\right)_{K}
    =\left(\boldsymbol{u},\boldsymbol{x}_{K}\times\boldsymbol{r}_{l-2}\right)_{K},
  \end{equation}
  because the boundary term vanishes. Applying \cref{eq:boundf} at order
  \(l-1\) therefore gives
  \begin{equation}
    \boldsymbol{u}=0\quad\text{in }K,
  \end{equation}
  that is,
  \begin{equation}
    \nabla\times\boldsymbol{z}=\nabla\times\boldsymbol{p}_{k}.
  \end{equation}

  We now verify the defining conditions
  \cref{eq:se1,eq:se2,eq:se3,eq:se4} for \(\boldsymbol{p}_{k}\). The first follows
  from \((\boldsymbol{z}-\boldsymbol{p}_{k})^{\partial K}=0\) on \(\partial K\). The second
  and fourth follow from
  \(\nabla\times\boldsymbol{z}=\nabla\times\boldsymbol{p}_{k}\) together with
  \(\bfitgreek{\Pi}^{\mathrm{Se}}_{k}\boldsymbol{z}=0\). For the third, let
  \(p_{k_{\mathrm{d}}}\in\mathbb{P}_{k_{\mathrm{d}}}(K)\). Since
  \(\boldsymbol{x}_{K}p_{k_{\mathrm{d}}}\in\mathbb{P}^{3}_{k}(K)
  \subseteq\mathbb{P}^{3}_{l}(K)\), the identity
  \(\bfitgreek{\Pi}_{l}\boldsymbol{z}=\boldsymbol{p}_{k}\) gives
  \begin{equation}
    \left(\boldsymbol{p}_{k},\boldsymbol{x}_{K}p_{k_{\mathrm{d}}}\right)_{K}
    =\left(\boldsymbol{z},\boldsymbol{x}_{K}p_{k_{\mathrm{d}}}\right)_{K}=0,
  \end{equation}
  again because \(\bfitgreek{\Pi}^{\mathrm{Se}}_{k}\boldsymbol{z}=0\). Hence
  \begin{equation}
    \bfitgreek{\Pi}^{\mathrm{Se}}_{k}\boldsymbol{p}_{k}=0.
  \end{equation}
  Since \(\boldsymbol{p}_{k}\in\mathbb{P}^{3}_{k}(K)\), \cref{prop:se} implies
  that \(\bfitgreek{\Pi}^{\mathrm{Se}}_{k}\) acts as the identity on
  \(\boldsymbol{p}_{k}\). Therefore
  \begin{equation}
    \boldsymbol{p}_{k}=\bfitgreek{\Pi}^{\mathrm{Se}}_{k}\boldsymbol{p}_{k}=0.
  \end{equation}
  This proves the stated low-order nonreentry property.
\end{proof}

\begin{corollary}[Full-Space Stable Polynomial Projection for Edge Spaces]
  \label{cor:edge-full-space-stability}
  There exists an integer \(l_{\mathrm{e},*}=l_{\mathrm{e},*}(K,k)\) such that
  \(\bfitgreek{\Pi}_{l}\) is injective on \(\mathbf{V}^{\mathrm{Se}}_{k,l}(K)\)
  for every sufficiently large admissible order \(l\).
\end{corollary}

\begin{proof}
  Let \(l\geq \max\{l_{\mathrm{nr},*},l_{\mathrm{t},*}\}\), and let
  \(\boldsymbol{v}\in\mathbf{V}^{\mathrm{Se}}_{k,l}(K)\) satisfy
  \(\bfitgreek{\Pi}_{l}\boldsymbol{v}=0\). Split
  \begin{equation}
    \boldsymbol{v}=\bfitgreek{\Pi}^{\mathrm{Se}}_{k}\boldsymbol{v}+\boldsymbol{z},
    \qquad
    \bfitgreek{\Pi}^{\mathrm{Se}}_{k}\boldsymbol{z}=0.
  \end{equation}
  Since \(\bfitgreek{\Pi}^{\mathrm{Se}}_{k}\boldsymbol{v}\in\mathbb{P}^{3}_{k}(K)\)
  and \(\bfitgreek{\Pi}_{l}\) is the identity on \(\mathbb{P}^{3}_{k}(K)\), we have
  \begin{equation}
    0=\bfitgreek{\Pi}_{l}\boldsymbol{v}
    =\bfitgreek{\Pi}^{\mathrm{Se}}_{k}\boldsymbol{v}+\bfitgreek{\Pi}_{l}\boldsymbol{z}.
  \end{equation}
  Hence \(\bfitgreek{\Pi}_{l}\boldsymbol{z}\in \mathbb{P}^{3}_{k}(K)\), and
  \cref{prop:edge-nonreentry-from-transfer} yields
  \begin{equation}
    \bfitgreek{\Pi}_{l}\boldsymbol{z}=0.
  \end{equation}
  Therefore also
  \begin{equation}
    \bfitgreek{\Pi}^{\mathrm{Se}}_{k}\boldsymbol{v}=0.
  \end{equation}
  Therefore \(\boldsymbol{z}\in\mathcal{Z}_{l}(K)\). By
  \cref{prop:edge-direct-identity,prop:edge-direct-reduction}, we obtain
  \begin{equation}
    \boldsymbol{z}^{\partial K}=0\quad\text{on }\partial K.
  \end{equation}

  Since the trace of an edge field on each face is tangential, the identity
  \(\boldsymbol{z}^{\partial K}=0\) implies that
  \(\boldsymbol{z}^{F}=0\) on every face. Consequently,
  \begin{equation}
    \nabla_{F}\times\boldsymbol{z}=0\quad\text{on each }F\subseteq\partial K,
  \end{equation}
  hence
  \begin{equation}
    \left(\nabla\times\boldsymbol{z}\right)\cdot\boldsymbol{n}=0
    \quad\text{on }\partial K.
  \end{equation}
  Moreover,
  \begin{equation}
    \boldsymbol{z}\cdot\boldsymbol{t}=0\text{ on }\partial F,
    \qquad
    \left(\boldsymbol{z},\boldsymbol{x}_{F}p_{\beta_{\mathrm{d}}}\right)_{F}=0
    \quad \forall F\subseteq\partial K,
  \end{equation}
  because \(\boldsymbol{x}_{F}\) is tangential to \(F\).

  Finally, the cell moments already vanish on \(\mathcal{Z}_{l}(K)\):
  \begin{equation}
    \left(\nabla\times\boldsymbol{z},\boldsymbol{x}_{K}\times\boldsymbol{p}_{\mu_{\mathrm{r}}}\right)_{K}=0
    \quad \forall \boldsymbol{p}_{\mu_{\mathrm{r}}}\in\mathbb{P}^{3}_{\mu_{\mathrm{r}}}(K),
  \end{equation}
  by \cref{eq:se4,eq:equivse}, and
  \begin{equation}
    \left(\boldsymbol{z},\boldsymbol{x}_{K}p_{k_{\mathrm{d}}}\right)_{K}=0
    \quad \forall p_{k_{\mathrm{d}}}\in\mathbb{P}_{k_{\mathrm{d}}}(K),
  \end{equation}
  by \cref{eq:se3}. Therefore every term on the right-hand side of
  \cref{eq:bounde} vanishes when applied to \(\boldsymbol{z}\), and we obtain
  \begin{equation}
    \|\boldsymbol{z}\|_{K}=0.
  \end{equation}
  Hence \(\boldsymbol{z}=0\).
  Therefore \(\boldsymbol{v}=0\), proving the injectivity of
  \(\bfitgreek{\Pi}_{l}\) on \(\mathbf{V}^{\mathrm{Se}}_{k,l}(K)\).
\end{proof}